\newtheorem{Theo}{Theorem}[section]
\newtheorem{Lemma}[Theo]{Lemma}
\newtheorem{Prop}[Theo]{Proposition}
\newtheorem{Coro}[Theo]{Corollary}
\theoremstyle{definition}
\newtheorem{defn}[Theo]{Definition}
\newtheorem{Rem}[Theo]{Remark} 
\numberwithin{equation}{section}
\begin{document}

\title{Extensions of groups by weighted Steiner loops}
\author{\'Agota Figula and Karl Strambach} 
\date{}
\maketitle

\begin{abstract}
Solving functional equations given in \cite{nagy} for extensions of a group $A$ by a weighted Steiner loop $S$ we obtain concrete description for all loops with interesting weak associativity properties if the Steiner loop $S$ induces only the trivial automorphism on $A$. We show that the restricted Fischer groups and their geometry play an important role for loop extension with right alternative property. Also the automorphism groups of these extensions as well as the conditions for isomorphisms between two extensions are studied. 
\end{abstract} 

\medskip
\noindent 
Unserem lieben Freunde Heinrich Wefelscheid zum 70. Geburtstag gewidmet. 

\medskip
\noindent
{\footnotesize {2000 {\em Mathematics Subject Classification:} 20N05, 51E10, 20D99.}}

\noindent
{\footnotesize {{\em Key words and phrases:} Steiner-like loop, weighted Steiner triple system, weighted Steiner loop, restricted Fischer group, Fischer space. }} 

\noindent
{\footnotesize {{\em Thanks: } The work of the first author was supported by the Hungarian Scientific Research Fund (OTKA) Grant PD 77392 and by the EEA and Norway Grants (Zolt\'an Magyary Higher Education Public Foundation).} }

\section{Introduction} 
The extension theory in the category of groups is a classical part of group theory which by means of 
cohomology archieved a satisfactory form of high aesthetic value (cf. \cite{huppert}, Kap. I, § 14 and § 16, \cite{kurosh}, Chap. XII, \cite{maclane}).  

In \cite{nagy} the authors investigate thoroughly a variation of extensions which yields loops as extensions of groups by loops such that these extensions are the most natural generalization of Schreier's extension theory for groups. 
These extensions of groups $A$ by loops $S$ are given by two equations describing the action of $S$ on $A$. 

Special types of extensions of groups of order $2$ by Steiner loops play an important role in code theory (cf. \cite{griess}, \cite{hsu}). In \cite{strambach} the authors study extensions of groups of order $2$ by Steiner loops such that the corresponding Steiner triple system has an orientation. 

Calling a Steiner loop $S$ equipped with a function $h: S \setminus \{ e \} \to A$, where $e$ is the identity element of $S$ and $A$ is a group, a weighted Steiner loop we study in this paper Schreier extensions $L$ of groups $A$ by weighted Steiner loops $(S, h)$ such that for the factor system 
$f: S \times S \to A$ one has $f(x,y)= h(x) h(y)$ for all $x,y \in S \setminus \{ e \}$  and $S$ induces only the trivial automorphism on $A$. We call such extensions $L$ Steiner-like loops. 
In general, these loops are power-associative but do not satisfy any other associativity condition. To obtain Steiner-like loops which have some stronger associativity properties we have to decide in which groups the functional equations described in \cite{nagy} have solutions and determine them explicitly. 
In this way we can describe all Steiner-like loops with interesting weak associative properties. To construct such Steiner-like loops one needs in the most cases that the group 
generated by the set $\{ f(x,y); \ x,y \in S \}$ is contained in the centre of $A$. Only the cross inverse or the automorphic inverse property of $L$ force the group $A$ to be abelian. In contrast to this Steiner-like loops satisfying the right Bol identity can be also constructed choosing $A$ a finite simple group having elements of order four (p. 18). 

For all loops considered in this paper the group $D$ generated by the set $\{ h(x); x \in S \setminus \{ e \} \}$ is abelian 
except the loops satisfying only the right alternative or the right inverse property. In this case if the Steiner loop $S$ is finite, then the factor group $D V/V$, where $V$ is a cyclic subgroup of $A$, is a restricted Fischer group.  
In contrast to this, if the group $D$ is abelian,  the Steiner loop $S$ has more than four elements and the values  $f(x,x)$ are determined by the values of the non-constant function $h$, then $D$ has rank two and one generator is an involution. 
 Moreover, $S$ is the direct product of $\mathbb Z_2$ and a subloop 
of index $2$.

The group $G_r$ generated by all right translations of a Steiner-like loop $L$ is an extension of the group $A$ by the group $\Sigma $ generated by the set $\{\rho _{(a,1)};\ a \in S \}$ (Proposition \ref{lefttranslation}). The analogous assertion holds for the group $G_l$ generated by all left translations of a Steiner-like loop $L$ if and only if the set 
$\{ f(x,y); \ x,y \in S \}$ is contained in the centre of $A$ (Proposition \ref{righttranslation}).

Also the automorphism groups of Steiner-like loops as well as the conditions for the isomorphisms between two Steiner-like loops are studied.

\section{Preliminaries} 
A set $L$ with a binary operation $(x,y) \mapsto x \cdot y$ is called a loop 
if there exists an element $e \in L$ such that $x=e \cdot x=x \cdot e$ holds 
for all $x \in L$ and the equations $a \cdot y=b$ and $x \cdot a=b$ for any given $a,b \in L$ have 
precisely one solution which we denote by $y=a \backslash b$ and $x=b/a$.

The subloop $L'$ generated by all associators and all commutators of a loop $L$ is called the derived subloop  of $L$.  
As for groups the loop $L'$ is the smallest subloop of $L$ such that $L/L'$ is an abelian group.  

The left, right and middle nucleus, respectively, of a loop $L$ are the subgroups of $L$ which are defined in the following way: 
\[ N_l=\{ u; \  ux \cdot y= u \cdot xy, \ x,y \in L\}, \ \ N_r=\{ w; \  x \cdot y w= xy \cdot w, \ x,y \in L\} \]
\[ \hbox{and} \ \ N_m=\{ v; \ xv \cdot y= x \cdot vy, \ x,y \in L\}.  \]  
The intersection $N=N_l \cap N_r \cap N_m$ is called the nucleus of $L$. The centre $Z$ of a loop $L$ is the largest subgroup of $N$ such that 
$z x=x z$ for all $x \in L$, $z \in Z$.

The left 
translations $\lambda _a: y \mapsto a \cdot y :L \to L$  as well as the right translations 
$\rho _a: y \mapsto y \cdot a :L \to L$ are bijections of $L$ for any $a \in L$. 

A loop $L$ is flexible if 
\begin{equation} \label{equc1} (x \cdot y) \cdot x = x \cdot (y \cdot x) \end{equation} 
holds for all $x,y \in L$. A loop $L$ is left alternative, respectively right alternative if 
\begin{equation} \label{equc2}  x \cdot (x \cdot y) = (x \cdot x) \cdot y, \end{equation} 
respectively 
\begin{equation} \label{equc4} (y \cdot x) \cdot x = y \cdot (x \cdot x) \end{equation} 
holds for all $x, y \in L$. 
A loop $L$ satisfies the left inverse, respectively the right inverse, respectively the cross inverse property 
if for all $x, y \in L$   
\begin{equation}  \label{equc3} x^{\lambda } \cdot (x \cdot y)= y, \end{equation} 
respectively 
\begin{equation} \label{equc6} (y \cdot x) \cdot x^{\rho }= y, \end{equation} 
respectively 
\begin{equation} \label{equk1} (x \cdot y) \cdot x^{\rho }= y \end{equation} 
holds with $x^{\lambda }=e/x$, $x^{\rho }=x \backslash e$.  
If a loop $L$ satisfies the left or the right inverse property, then one has 
$x^{\lambda }=x^{\rho }$ and we denote this element $x^{-1}$. A loop $L$ has the automorphic inverse property if the identity  
\begin{equation} \label{equk2} (x \cdot y)^{-1}= x^{-1} \cdot  y^{-1} \end{equation} 
holds for all $x, y \in L$. 
A loop $L$ satisfies the weak inverse property if for  $x,y,z \in L$ one has 
\begin{equation} \label{equweek} x \cdot (y \cdot z)=e \ \ \hbox{whenever} \ \  (x \cdot y) \cdot z =e.  \end{equation} 
A loop $L$ is a left, respectively a right Bol loop if 
\begin{equation} \label{equk3} [x \cdot (y \cdot x)] \cdot z = x \cdot [y \cdot (x \cdot z)],  \end{equation} 
respectively 
\begin{equation} \label{equk4} z \cdot [(x \cdot y) \cdot x] = [(z \cdot x) \cdot y] \cdot x  \end{equation} 
holds for all $x, y, z \in L$. 
A loop $L$ which has the left and the right Bol identity is called a Moufang loop.

\noindent
A Steiner triple system $({\mathfrak S}, T)$ is an incidence structure consisting of a set ${\mathfrak S}$ of points and a set $T$ of blocks such that two distinct points are contained in precisely one block and every block has precisely three points. 
A finite Steiner triple system with $n$ points exists if and only if $n \equiv  1$ or $3 \ (\hbox{mod} \ 6)$ 
(cf. \cite{flugfelder}, V.1.9 Definition, p. 124).  

A loop $L$ is called a Steiner loop, if it is totally symmetric, i.e. if $x \cdot y = y \cdot x$ and 
$x \cdot (x \cdot y)=y$ hold for all $x,y \in L$. Putting $y=e$ into the last identity we have $x^2=e$ for all $x \in L$. Hence a 
Steiner loop is a commutative loop of exponent $2$ with the inverse property and the bijections $\lambda _a = \rho _a$, $a \in L$, are involutions (see \cite{flugfelder}, Chap. V.). 

With a Steiner triple system $({\mathfrak S}, T)$ is associated a Steiner loop $(S({\mathfrak S}), \circ )$ such that the elements of 
$S({\mathfrak S}) \setminus \{ e \}$, where $e$ is the identity of $S({\mathfrak S})$, are the points of the Steiner triple system $({\mathfrak S}, T)$, 
the product $a \circ b$ is the third point of the line determined by $a, b$ and  $a \circ a = e$ for all 
$a \in S({\mathfrak S})$.     

\begin{defn}
A weighted Steiner
loop $(S, h)$ is a Steiner loop $S$ and a map $h:S \setminus \{ e \} \to A$, where $A$ is a group.  
\end{defn} 

A restricted Fischer group is a pair $(G, E)$ consisting of a group $G$ and a system of generators $E \subset G$ satisfying the following conditions: 
\newline
\noindent
(i) For all $x \in E$ one has $x^2=1$. 
\newline
\noindent
(ii) For all $x,y \in E$ we have $(xy)^3=1$ and $xyx \in E$ (cf. \cite{manin}, p. 20).

\section{Extensions of loops} 
Let $S$ be a loop and $A$ be a group. In \cite{nagy} are considered loop extensions
\begin{equation} \label{equ1} 1 \longrightarrow A \longrightarrow L \longrightarrow S \longrightarrow  e  \nonumber  \end{equation}
such that the multiplication in the loop $L$, which is realized on the set
$S \times A$, is given by
\begin{equation} \label{equextension} (x, \xi )(y, \eta ) = (xy, f(x, y) \xi \eta ), \end{equation} 
where $f$ is a map from $S \times S$ to $A$ with $f(x, e) =  f(e, y) = 1 \in A$ for all $x,y \in S$.   The identity element of the loop $L$ is $(e,1)$. 

\noindent
The left inverse of $(x, \xi) \in L$ is $(x^{-1}, f(x^{-1},x)^{-1} \xi ^{-1})$ and
the right inverse of $(x, \xi)$ is $(x^{-1}, \xi ^{-1} f(x,x^{-1})^{-1})$ such that $x^{-1}= x \backslash e= e /x$ in $S$. Hence in the loop $L$ the left inverse of any element coincides with the right inverse if and only if 
$f(x^{-1},x)= \xi^{-1} f(x, x^{-1}) \xi $ for all $x \in S$ and $\xi \in A$ (cf. \cite{nagy}, Proposition 3.3, p. 762).

\noindent
The group $A$ is contained  in the middle  and in the right nucleus of $L$. It is contained in the left nucleus and hence in the nucleus of $L$ if and only if $f(x, y)$ are elements of the centre of $A$ for all $x, y \in S$ (cf. \cite{nagy}, Proposition 3.2. (i), p. 762).

\begin{Rem} \label{rem1}
A loop $L$ of type (\ref{equextension}) is a central extension of $A$ by the loop $S$ if and only if $A$ is commutative (cf. \cite{nagy}, p. 762). The loop 
$L$ is the direct product of $A$ and $S$ if and only if $f(x,y)=1$ for all $x,y \in S$. 
\end{Rem}

\noindent
{\bf Construction:} Let $\Delta$ be the group generated by the set $\{f(x, y);\   x, y \in S \}$. If $\Delta $ is central in $A$, then $A$ is a central extension of $\Delta$ by a group $B$ and the multiplication of $A$ on the set $B \times \Delta$ is given by 
$(r_1,s_1) (r_2,s_2)=(r_1 r_2, s_1 s_2 k(r_1,r_2))$,  
where the map $k: B  \times B \to \Delta $ is a factor system (\cite{huppert}, Satz 14.1, p. 86). The group $\Delta$ is also a central subgroup of the loop $L$. Hence $L$ is the direct product of $A$ and the smallest subloop of $L$ containing $\Delta$ and $S$ 
with amalgamated subgroup $\Delta $. The multiplication of $L$ on the set $S \times B \times \Delta$ is given by 
\begin{equation} ((a_1,1)(e, \rho _1)(e, \sigma _1))((a_2,1)(e, \rho _2)(e, \sigma _2))= \nonumber \end{equation} 
\begin{equation} \label{equ2} (a_1 a_2,1) (e, \rho _1 \rho _2) (e, \sigma _1 \sigma _2 f(a_1,a_2) k(\rho _1, \rho _2)), \end{equation}  
where $a_i \in S$, $\rho _i \in B$, $\sigma _i \in \Delta$, $i=1,2$. 

\medskip
\noindent 
If $S$ is a group, then $L$ is also a group precisely if $f: S \times S \to \Delta$ is a factor system for $S$. From this it  follows:

\noindent
\begin{Rem} If the loop $S= \{ e,a \}$ and $f(a,a)$ is contained in the centre of $A$, then the loop $L$ of type (\ref{equextension}) is a group.  
\end{Rem}

\noindent
Since extensions of type (\ref{equextension}) do not allow a classification in this paper we treat a particularly simple type of extensions (\ref{equextension}). 

\begin{defn} 
Let $A$ be a group and let $(S, h)$ be a weighted Steiner loop.  
A Steiner-like loop is an extension (\ref{equextension}) such that for the map $f: S \times S \to A$ one has 
$f(x, y) = h(x)h(y)$ for all $x, y \in S \setminus \{ e \}$ and $x \neq y$.  
\end{defn} 

Since $S$ is a Steiner loop we have $x^{-1}=x$ and $f(x^{-1},x)=f(x,x)$. Hence the left inverse of an element of $L$ coincides with the right inverse if and only if the set $\{f(x,x); \ x \in S \}$ is contained in the centre of $A$. A Steiner-like loop $L$ is commutative if and only if $A$ is commutative.  

A Steiner-like loop is power-associative precisely if 
the set $\{f(x,x); \ x \in S \}$ is contained in the centre of $A$.
In general, if $S$ is a finite Steiner loop with $n$ elements and if we take for the group $A$ an extension of an abelian group $Z$ by a finitely generated group $D$, then mapping $x \mapsto h(x)$, $x \in S \setminus \{ e \}$, where $h(x)$ is a generator of $D$, we obtain a Steiner-like loop $L$, if the number of the generators of $D$ does not exceed $n-1$ and the set $\{ h(x); x \in S \setminus \{ e \} \}$ contains a set of generators of $D$.

\section{Restricted Fischer groups and weighted Steiner triple systems} 
\begin{defn}
Let $({\mathfrak S},T)$ be a finite Steiner triple system and $G$ be a group generated by a set ${\mathcal I}$ of involutions. If $w$ is a mapping from ${\mathfrak S}$ onto ${\mathcal I}$ such that 
\begin{equation} \label{wfunction} w(x) w(y) w(x)= w(xy), \end{equation} 
where $x, y$ and $xy$ are points of a block $t$ of $T$, then we say that $({\mathfrak S}, w, {\mathcal I})$ is a weighted Steiner triple system. 
\end{defn} 
The group generated by $w(x)$ and $w(y)$ for different points $x$ and $y$ is according to (\ref{wfunction}) either the group of order $2$ or the symmetric group of order $6$. If for any two points $x$ and $y$ the group 
$\langle w(x), w(y) \rangle $ is abelian, then $G$ is isomorphic to $\mathbb Z_2$. Let $t= \langle x,y,z \rangle $ be a block of $T$ such that $\langle w(x), w(y) \rangle $ is the symmetric group of order $6$. Let $V_1, \cdots , V_k$ be the maximal sets of points in ${\mathfrak S}$ having the values $w_1, \cdots , w_k$ and one has ${\mathcal I}= \{ w_1, \cdots , w_k \}$. Because of 
(\ref{wfunction}) the sets $V_i$ form Steiner triple subsystems (which can be also points) such that $w_{a_i}=w_i$ for all points $a_i \in V_i$ and $V_i \cap V_j = \emptyset $. Moreover, one has $w_i w_j \neq w_j w_i$ for all $i \neq j$ and hence 
the group $\langle w_i, w_j \rangle $ for all $i \neq j$ is the symmetric group of order $6$. For the elements of 
the set ${\mathcal I}$ one has $o( w_i w_j)=3$ and $w_i w_j w_i \in {\mathcal I}$. Hence $G$ is a restricted Fischer group with ${\mathcal I}$ as a set of generators (\cite{manin}, p. 20). The Fischer space $\mathcal{L(I)}$ corresponding to the Fischer group $G$ has as points the elements of ${\mathcal I}$ and as lines the triples $\{ w_i, w_j, w_i w_j w_i \}$ for all 
$i \neq j \in {\mathcal I}$ (\cite{aschbacher}, p. 92). The number of elements of ${\mathcal I}$ is $3^k$. According to 
\cite{aschbacher}, p. 94, the Fischer space $\mathcal{L(I)}$ is a Hall system, i.e. any three non-collinear points of 
${\mathcal I}$ generates the affine plane of order $3$ and the group $G$ consists of automorphisms of $\mathcal{L(I)}$. 
Let $\varphi : {\mathfrak S} \to \mathcal{L(I)}$ be the mapping $x \mapsto w(x)$. Then $\varphi $ is a homomorphism of Steiner triple systems since for the points $x, y, xy$ of a block $t$ one has either $w(x)= w(y)= w(xy)$ or $w(x)$, $w(y)$, $w(xy)$ are 
different points of a block in $\mathcal{L(I)}$. 

Conversely, let $G$ be a restricted Fischer group generated by a set $\mathcal{I}$ of involutions. Let $\mathcal{L(I)}$ be the corresponding Fischer space. We call a Steiner triple system ${\mathfrak S}$ a covering of $\mathcal{L(I)}$ if there is an epimorphism 
$\varphi : {\mathfrak S} \to \mathcal{L(I)}$. Then $S$ is a weighted Steiner triple system with values in $\mathcal{I}$ if we put 
$w(x)=r$ for all $x \in \varphi^{-1}(r)$ and $r \in \mathcal{L(I)}$. Summarizing the above discussion we obtain 

\begin{Theo} \label{covers} A Steiner triple system ${\mathfrak S}$ is a weighted Steiner triple system such that the group $G$ generated by the set $\mathcal{I}$ of involutions $w(x)$, $x \in {\mathfrak S}$, is non-abelian if and only if $G$ is a restricted Fischer group and ${\mathfrak S}$ covers the Fischer space $\mathcal{L(I)}$.
\end{Theo}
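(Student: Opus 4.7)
The plan is to prove the two implications separately, closely following the sketch in the paragraphs above the theorem but filling in the algebraic computation left implicit.

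For the forward direction, I would assume $(\mathfrak{S}, w, \mathcal{I})$ is a weighted Steiner triple system with $G=\langle \mathcal{I}\rangle$ non-abelian, write $\mathcal{I}=\{w_1,\dots,w_k\}$, and partition $\mathfrak{S}$ into the fibers $V_i = w^{-1}(w_i)$. The relation (\ref{wfunction}) together with $w_i^2 = 1$ immediately shows each $V_i$ is a Steiner triple subsystem. Then for any two distinct $w_i, w_j \in \mathcal{I}$, I would pick $a \in V_i$, $b \in V_j$ (possible since $w$ is surjective) and apply (\ref{wfunction}) to the pair $(a,b)$ on the block $\{a,b,ab\}$ to obtain $w_i w_j w_i = w(ab) \in \mathcal{I}$; applying (\ref{wfunction}) to the pair $(b, ab)$ of the same block (whose product is $a$) yields $w_j \,(w_iw_jw_i)\,w_j = w_i$, which rearranges via the involution property to $(w_iw_j)^2 = w_jw_i = (w_iw_j)^{-1}$, hence $(w_iw_j)^3 = 1$. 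Since $w_i \neq w_j$ forces $w_iw_j \neq 1$, the order is exactly $3$. Together these verify both axioms of a restricted Fischer group. Finally, the map $\varphi: x \mapsto w(x)$ sends each block either to a single point of $\mathcal{L(I)}$ (when $w(x)=w(y)$, using $w(xy) = w(x)^3 = w(x)$) or to the Fischer line $\{w(x),w(y),w(x)w(y)w(x)\}$, so $\varphi$ is an epimorphism of Steiner triple systems and $\mathfrak{S}$ covers $\mathcal{L(I)}$.

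For the converse, given a restricted Fischer group $G$ with generating set $\mathcal{I}$ and an epimorphism $\varphi : \mathfrak{S} \to \mathcal{L(I)}$, I would set $w := \varphi$ and verify (\ref{wfunction}) case-by-case on blocks. If $\varphi$ collapses a block $\{x,y,xy\}$ to a point, then $w(x) w(y) w(x) = w(x)^3 = w(x) = w(xy)$; otherwise the image is a Fischer line $\{r,s,rsr\}$, on which the identity that $a \cdot b \cdot a$ equals the third point holds for any two of the three points (using $(rs)^3=1$, equivalently $rsr=srs$), so again $w(x)w(y)w(x)=w(xy)$. Surjectivity of $w$ onto $\mathcal{I}$ follows from that of $\varphi$, and $G$ is non-abelian because $o(w_iw_j)=3$ for distinct generators.

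The main obstacle I expect is the algebraic manipulation extracting $(w_iw_j)^3=1$ from (\ref{wfunction}): since (\ref{wfunction}) is a single-block statement, one must exploit the three symmetric viewpoints of the block under the Steiner loop identities $a \cdot b = ab$, $a \cdot (ab) = b$ and $b \cdot (ab) = a$, combined with $w_i^2 = 1$, to squeeze out the cubic relation. Once this is in place, the rest is routine bookkeeping plus a case distinction on whether $\varphi$ collapses a block.
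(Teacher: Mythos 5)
Your proposal is correct and takes essentially the same route as the paper, whose proof of Theorem \ref{covers} is exactly the discussion preceding it: fibering ${\mathfrak S}$ into the sets $V_i$ on which $w$ is constant, extracting $o(w_iw_j)=3$ and $w_iw_jw_i\in{\mathcal I}$ by applying (\ref{wfunction}) to pairs within a single block, concluding that $G$ is a restricted Fischer group, and checking that $x\mapsto w(x)$ sends each block either to a point or to a line of $\mathcal{L(I)}$, with the converse obtained by pulling $w$ back along the covering epimorphism. The only cosmetic difference is that you derive the cubic relation from the pair $(y,xy)$ of the block, whereas the paper implicitly uses the symmetry $w(x)w(y)w(x)=w(xy)=w(y)w(x)w(y)$; both are one-line consequences of (\ref{wfunction}) and $w(x)^2=1$.
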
      

M. Hall is classified all restricted Fischer groups of rank $4$ determinating the largest restricted Fischer group of rank $4$. This group has order $2 \cdot 3^{10}$ and its homomorphic images of rank $4$ give all restricted Fischer groups of rank $4$. By this he obtained also a classification of all Hall systems which can be generated by four distinct points (see 19, in \cite{aschbacher}, pp. 94-101). 

The simplest cases of coverings of Fischer space arise from epimorphisms of affine spaces over the field $GF(3)$. Let $G$ be the semidirect product of the elementary abelian $3$-group $(V, + )$ of order $3^s$ by a group $\langle \alpha \rangle $ 
of order $2$ acting on $V$ as $x \mapsto -x$ for all $x \in (V, + )$. Then $G$ is a restricted Fischer group such that the Fischer space $\mathcal{L(I)}$ corresponding to the set $\mathcal{I}= \{(x, \alpha ); x \in V \}$ of generating involutions of $G$ is the affine space over $GF(3)$ of dimension $s$. The mapping 
$(x_1, \cdots ,x_s) \mapsto ((x_1, \alpha ), \cdots , (x_s, \alpha ))$ is an isomorphism from $(V, + )$ onto $\mathcal{L(I)}$. Let $W$ be the affine space over $GF(3)$ of dimension $n$ with $n > s$. Then the mapping $\tau : (z_1, \cdots , z_n) \mapsto (z_1, \cdots , z_s, 0, \cdots ,0)$ is an epimorphism from $W$ onto $V$. One has $\tau ^{-1}(z_1, \cdots ,z_s)=\{ (z_1, \cdots , z_s,y_{s+1}, \cdots , y_n); y_l \in GF(3), s+1 \le l \le n \}$. Assigning to any element $u$ of 
$\tau ^{-1}(z_1, \cdots ,z_s)$ the value $w(u)=((z_1, \alpha ), \cdots , (z_s, \alpha ))$ we see that $W$ is a covering of the Fischer space $\mathcal{L(I)}$.

\begin{Prop} Let $({\mathfrak S}, w, {\mathcal I})$ be a weighted Steiner triple system such that the mapping 
$w: {\mathfrak S} \to {\mathcal I}$, 
$x \mapsto w_x$ is bijective and the group $G$ generated by the set ${\mathcal I}$ is non-abelian, then the multiplication $\ast $ on ${\mathfrak S}$ defined by $x \ast y= x \cdot y$ for all $x \neq y \in {\mathfrak S}$ and $x \ast x=x$ gives a distributive symmetric quasigroup. 
\end{Prop}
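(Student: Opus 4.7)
The plan is to verify the three required properties (symmetric, quasigroup, distributive) by translating the operation $\ast$ on ${\mathfrak S}$ via the bijection $w$ into the conjugation-like operation on ${\mathcal I}$ given by (\ref{wfunction}), and then verifying the distributive identity by a direct computation inside the restricted Fischer group $G$.

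First I would dispose of idempotence (immediate from the definition $x \ast x = x$) and commutativity (the Steiner product $x \cdot y$ depends only on the unordered block $\{x, y, x \cdot y\}$). For the quasigroup property I would solve $a \ast z = b$ case by case: if $a = b$, then $z = a$ is forced, because for $z \neq a$ the value $a \ast z = a \cdot z$ is the third point of a block through $a$ and hence distinct from $a$; if $a \neq b$, then $z = a \cdot b$ is the unique third point on the block through $a$ and $b$. Commutativity then yields unique solvability of $z \ast a = b$ as well, so $({\mathfrak S}, \ast)$ is a symmetric quasigroup.

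The core of the argument is distributivity. By (\ref{wfunction}) and the bijectivity of $w$, the operation $\ast$ on ${\mathfrak S}$ is intertwined by $w$ with the operation on ${\mathcal I}$ defined by $r \ast s = rsr$ for $r \neq s$ and $r \ast r = r$. Since $G$ is a restricted Fischer group, for distinct $r, s \in {\mathcal I}$ one has $(rs)^3 = 1$ together with $r^2 = s^2 = 1$, which yields the key symmetry $rsr = srs$. I would then compute, for pairwise distinct $p, q, r \in {\mathcal I}$,
\[
(p \ast q) \ast r = r(qpq)r = rqpqr,
\]
\[
(p \ast r) \ast (q \ast r) = (rpr) \ast (rqr) = (rqr)(rpr)(rqr),
\]
and observe that collapsing with $r^2 = 1$ reduces the right-hand side also to $rqpqr$. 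Left distributivity then follows from commutativity.

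The remaining obstacle, which I expect to be the main technical point, is to check the degenerate configurations in which two of $p, q, r$ coincide or in which one of the intermediate products $p \ast q$, $p \ast r$, $q \ast r$ happens to equal one of $p, q, r$. These cases are settled by idempotence, commutativity, and the relation $(pq)^3 = 1$; the Hall-system property of the Fischer space $\mathcal{L(I)}$ already recorded in the paper ensures that no inconsistency arises, since the subsystem generated by any three points of ${\mathcal I}$ is either a line or an affine plane of order $3$, and distributivity is trivially checked on either.
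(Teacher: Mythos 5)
Your proposal is correct and follows essentially the same route as the paper: both arguments transport the operation $\ast$ through the bijection $w$ to the conjugation operation $r \circ s = r s r$ on the set $\mathcal{I}$ of generating involutions of the restricted Fischer group $G$. The only difference is that the paper simply cites \cite{bolis} for the fact that $(\mathcal{I}, \circ)$ is a distributive symmetric quasigroup, whereas you check the symmetric quasigroup axioms directly on $\mathfrak{S}$ and establish distributivity by the explicit computation reducing both sides to $rqpqr$ (using $r^2=1$ and $rsr=srs$); this makes the argument self-contained, and your concluding worry about degenerate configurations is in fact unnecessary, since the identity $a \ast b = b a b$ holds uniformly for all $a, b \in \mathcal{I}$, including $a = b$, so the computation needs no case distinction.
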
 
\begin{proof} By Theorem \ref{covers} the group $G$ is a restricted Fischer group and the set 
${\mathcal I}=\{ w_x ; x \in {\mathfrak S} \}$ is a set of generators of $G$. Hence one has 
$w_x w_y w_x = w_{xy}$. Since the mapping $w : x \mapsto w_x$ 
is bijective we have $w^{-1}(w_{xy})= x \cdot y= x \ast y$. The set ${\mathcal I}$ is with respect to the multiplication 
$(w_x \circ w_y) = w_x w_y w_x$ a distributive symmetric quasigroup 
(cf. \cite{bolis}, p. 387). Since 
$w(x \ast y) =  w_{xy} = w_x w_y w_x$ we have 
\[ (w_x \circ (w_y \circ w_z))= w_x \circ (w_y w_z w_y )=  w_x \circ w(y \ast z) = \] 
\[ w_x  w(y \ast z) w_x = w( x \ast (y \ast z)) \] 
and 
\[((w_x \circ w_y) \circ (w_x \circ w_z)) = w(x \ast y) \circ w(x \ast z) = \]
\[ w(x \ast y) w(x \ast z) w(x \ast y) = w((x \ast y) \ast (x \ast z)). \]  
As $(w_x \circ (w_y \circ w_z)) = ((w_x \circ w_y) \circ (w_x \circ w_z))$, then using $w^{-1}$ we get that 
$({\mathfrak S}, \ast )$ is a distributive symmetric quasigroup. \end{proof}

\section{Weights of Steiner loops $S$}

\begin{Lemma} \label{hxcommutative} Let $L$ be a Steiner-like loop  
such that the  Steiner loop $S$ has more than $2$ elements and the set 
${\mathcal K}=\{ h(x) h(y); \ x,y \in S \setminus \{ e \}, x \neq y \}$
is contained in the centre $Z(A)$ of the group $A$. Then the group $D$ generated by 
the set ${\mathcal H}=\{ h(z); \ z \in S \setminus \{ e \} \}$ is abelian. 
\end{Lemma}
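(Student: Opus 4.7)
The plan is to observe that centrality of the product $h(x)h(y)$ in $A$ is already a surprisingly strong constraint: it forces each individual factor $h(x)$ and $h(y)$ to commute with one another. The generators of $D$ then commute pairwise, so $D$ is abelian.

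Concretely, I would fix arbitrary distinct elements $x,y\in S\setminus\{e\}$. Such a pair exists because $|S|>2$ forces $|S\setminus\{e\}|\geq 2$ (in fact $\geq 3$, since nontrivial Steiner loops have order at least $4$). By hypothesis, $h(x)h(y)\in \mathcal{K}\subseteq Z(A)$, so in particular this element commutes with $h(x)\in A$:
\[
h(x)\cdot\bigl(h(x)h(y)\bigr)=\bigl(h(x)h(y)\bigr)\cdot h(x).
\]
Rewriting the two sides as $h(x)^{2}h(y)$ and $h(x)h(y)h(x)$ respectively, and cancelling the leftmost factor $h(x)$ in the group $A$, yields
\[
h(x)h(y)=h(y)h(x).
\]
Thus every two generators $h(x),h(y)$ of $D$ commute (the case $x=y$ is trivial), and so $D=\langle\mathcal{H}\rangle$ is abelian.

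There is no real obstacle here beyond noticing the cancellation trick: the central hypothesis is applied not to deduce that $h(x)h(y)$ commutes with a ``foreign'' element but with one of its own factors, which is precisely what unlocks commutativity of the factors themselves. The assumption $|S|>2$ is used only to guarantee that the set of such pairs is nonempty, so that the hypothesis on $\mathcal{K}$ has any content at all.
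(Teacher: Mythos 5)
Your proof is correct and takes essentially the same route as the paper's: both arguments extract $h(x)h(y)=h(y)h(x)$ in one line from the centrality of $h(x)h(y)$ in $A$ (the paper writes $h(x)=z_1h(y)^{-1}$ with $z_1=h(x)h(y)$ central and concludes $h(y)h(x)=z_1$, which is the same cancellation you perform), and then concludes that the generators of $D$ pairwise commute. No further comment is needed.
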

\begin{proof} As $h(x) h(y)=z_1 \in Z(A)$ and $h(y) h(x)=z_2 \in Z(A)$ we have 
$h(x)= z_1 h(y)^{-1}$ and hence $z_1=z_2$. It follows that the 
range of the function $h$ is commutative and hence the group $D$ is abelian. 
\end{proof}

\begin{Lemma} \label{hxinvolution} Let $L$ be a Steiner-like loop. Let $D$ be the group generated by the set 
${\mathcal H}=\{ h(z); \ z \in S \setminus \{ e \} \}$. We assume that the set  ${\mathcal F}=\{f(x,x); \ x \in S \}$  is contained in the centre $Z(A)$ of $A$. 
If for all $x, y \in S \setminus \{ e \}$ with $x \neq y$ the identity 
\begin{equation} \label{smallequ3uj} h(x) h(y) h(x) h(x y) = f(x,x)  \nonumber \end{equation} 
holds, then the following properties are satisfied: 
\newline
\noindent
a) The set $\{ h(x)^2; \ x \in S \setminus \{ e \} \}$ is contained in the centre $Z(D)$ of $D$. 
\newline
\noindent
b) For arbitrary block $t=\{ x, y, xy \}$ of the Steiner triple system $(S \setminus \{ e \}, T)$ belonging to $L$ let 
$H_t$ be the group generated by the set  
$\{ h(x), h(y), h(xy) \}$. Then the following holds: 
\newline
\noindent
(i) If the factor group $H_t/(H_t \cap Z(D))$ is different from the identity, then it is isomorphic either to $S_3$ or to $\mathbb Z_2$. If the factor group $H_t/(H_t \cap Z(D))$ is the identity or isomorphic to $\mathbb Z_2$, then the group 
$H_t$ is abelian. 
\newline
\noindent
(ii) The group $D$ is abelian if and only if  for every block $t \in T$ the group $H_t$ is abelian. 
\end{Lemma}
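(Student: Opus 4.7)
The plan is to tackle (a) first; once established, (b)(i) and (b)(ii) follow by routine arguments. Fix a block $t = \{x, y, xy\}$, write $a = h(x)$, $b = h(y)$, $c = h(xy)$, and set $\alpha = f(x,x)$, $\beta = f(y,y)$, $\gamma = f(xy,xy)$, all of which are central in $A$ by hypothesis. Applying the identity $h(u)h(v)h(u)h(uv) = f(u,u)$ to each of the six ordered pairs of distinct points of $t$ (and using $x \cdot xy = y$, $y \cdot xy = x$) yields six relations that collapse, for each choice of central vertex, into a single commuting-pair identity:
\[
(ab)(ac) = (ac)(ab) = \alpha, \quad (ba)(bc) = (bc)(ba) = \beta, \quad (ca)(cb) = (cb)(ca) = \gamma.
\]
In particular $ab = \alpha c^{-1} a^{-1}$ and $ba = \beta c^{-1} b^{-1}$, while inverting the third relation gives $c^{-1} a^{-1} c^{-1} = \gamma^{-1} b$.

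The crucial computation is $ab \cdot ba = ab^2 a$; substituting the formulas above and moving central factors out yields
\[
ab^2 a = \alpha\beta \cdot c^{-1} a^{-1} c^{-1} b^{-1} = \alpha\beta\gamma^{-1} \in Z(A).
\]
Solving, $b^2 = \alpha\beta\gamma^{-1}\,a^{-2}$, so $a^2 b^2 = \alpha\beta\gamma^{-1}$ is central, and then $a^2 b = \alpha\beta\gamma^{-1} b^{-1} = b a^2$ follows at once. The parallel computation with the roles of $b$ and $c$ interchanged gives $ac^2 a = \alpha\gamma\beta^{-1} \in Z(A)$ and hence $a^2 c = c a^2$. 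Since every $z \in S \setminus \{e, x\}$ is a vertex of the block $\{x, z, xz\}$, the same reasoning applied in that block shows $a^2$ commutes with $h(z)$, so $h(x)^2 \in Z(D)$.

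For (b)(i), I would pass to $Q_t := H_t / (H_t \cap Z(D))$. By (a) the squares $a^2, b^2, c^2$ lie in $Z(D)$, and $\alpha, \beta, \gamma \in Z(A) \cap H_t \subseteq H_t \cap Z(D)$; hence in $Q_t$ each of $\bar a, \bar b, \bar c$ has order at most $2$, and the hypothesis reduces to $\bar a \bar b \bar a = \bar c = \bar b \bar a \bar b$ (together with the two cyclic analogues). A short case check shows that if any one of $\bar a, \bar b, \bar c$ is trivial, or if two of them coincide, then all three coincide; thus $Q_t$ is trivial or cyclic of order $2$. In the remaining case the three distinct involutions satisfy the braid relation and $(\bar a \bar b)^3 = \bar c^2 = \bar 1$, which presents $S_3$. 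When $Q_t$ is cyclic its kernel $H_t \cap Z(D)$ lies in $Z(H_t)$, so the standard fact ``$H/Z$ cyclic implies $H$ abelian'' yields the commutativity of $H_t$. Finally, (b)(ii) is immediate: one direction is trivial from $H_t \le D$, and for the converse any two generators $h(x), h(y)$ of $D$ with $x \ne y$ lie together in $H_t$ for $t = \{x, y, xy\}$, hence commute.

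The main obstacle I anticipate is spotting the central-value identity $ab^2 a = \alpha\beta\gamma^{-1}$; once this is in hand, the remainder of both parts is bookkeeping with central factors together with a routine cyclic-quotient argument.
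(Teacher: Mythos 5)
Your argument is correct, and part (b) follows the paper's proof almost exactly: you pass to $H_t/(H_t\cap Z(D))$, note that by part (a) and the centrality of the $f$-values the generators become involutions and the block relation becomes $\bar a\bar b\bar a=\bar c=\bar b\bar a\bar b$, deduce $(\bar a\bar b)^3=\bar 1$, and conclude that the quotient is trivial, $\mathbb Z_2$ or $S_3$, with the cyclic cases forcing $H_t$ abelian via $H_t\cap Z(D)\le Z(H_t)$; part (b)(ii) is the same two-line observation in both treatments. Where you genuinely diverge is part (a). The paper first extracts the palindrome identity $h(x)h(xy)h(y)=h(y)h(xy)h(x)$ from the two relations with central vertex $xy$, and combines it with the fact that $h(x)$ and $h(y)$ commute with the central element $f(x,x)=h(x)h(y)h(x)h(xy)$ to squeeze out $h(y)^2h(x)=h(x)h(y)^2$. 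Your computation $ab\cdot ba=ab^2a=\alpha\beta\gamma^{-1}$ is shorter and delivers strictly more: it produces the central identity $h(x)^2h(y)^2=f(x,x)f(y,y)f(xy,xy)^{-1}$, which is precisely identity (\ref{equ10}) that the paper only establishes later, in the proof of Theorem \ref{Prop3}, by a longer chain of substitutions that itself relies on Lemma \ref{hxinvolution}~a). So your route not only proves the lemma but would also streamline that later derivation. All the individual steps check out (the six block relations, the inversion $c^{-1}a^{-1}c^{-1}=\gamma^{-1}b$, the passage from $a^2b^2\in Z(A)$ to $a^2b=ba^2$, and the reduction of the general case to blocks through $x$); the case analysis in (b)(i) is routine as claimed.
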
 
\begin{proof}  As $h(xy) h(x) h(xy) h(y) = f(xy, xy)= h(xy) h(y) h(xy) h(x)$ one has  
\begin{equation} \label{equinvolution1} h(x) h(xy) h(y)= h(y) h(xy) h(x). \end{equation} 
Since $f(x,x) \in Z(A)$ for all $x \in S$ we have  
\begin{equation} \label{equinvolution2} h(x)^{-1}[ h(x) h(y) h(x) h(xy)] h(x)= h(x) h(y) h(x) h(x y) = h(y) h(x) h(xy) h(x)  \end{equation}  
and  
\begin{equation} \label{equinvolution3} h(y) [h(x) h(y) h(x) h(xy)]= [h(x) h(y) h(x) h(x y)] h(y). \end{equation}   
Using (\ref{equinvolution2}) on the left hand side of (\ref{equinvolution3}) and (\ref{equinvolution1}) on the right hand side of 
(\ref{equinvolution3}) we obtain 
$h(y)^2 h(x) h(xy) h(x)= h(x) h(y)^2 h(xy) h(x)$ or equivalently $h(y)^2 h(x) = h(x) h(y)^2$ for all 
$x \neq y \in S \setminus \{ e \}$. Hence the assertion a) is proved. 

For elements $h(x) Z(D)$ of $D/Z(D)$ one has $h(x) h(y) h(x) h(xy) Z(D)= Z(D)$. As 
\begin{equation} h(xy) h(x) Z(D)= [h(x) h(y) h(x) h(xy)] h(xy) h(x) Z(D)= \nonumber \end{equation}
\begin{equation} h(x) h(y) h(xy)^2 h(x)^2 Z(D)  \nonumber \end{equation}  
we get  $h(xy) h(x) Z(D)= h(x) h(y) Z(D)$.  Analogously we have 
\begin{equation} \label{equinvolution4}  
h(x) h(y) Z(D)= h(xy) h(x) Z(D)= h(y) h(xy) Z(D), \nonumber \end{equation}
\begin{equation} h(y) h(x) Z(D)= h(x) h(xy) Z(D)= h(xy) h(y) Z(D). \end{equation} 
Let $t$ be an arbitrary block of $(S \setminus \{ e \}, T)$ consisting of points $x,y, xy$. The factor group $H_t/Z_t$, where $Z_t=H_t \cap Z(D)$ is generated by the elements $h(x) Z_t$, $h(y) Z_t$, $h(xy) Z_t$.  If $h(x) Z(D)= h(y) Z(D)= h(xy) Z(D)= Z(D)$, then $H_t/Z_t$ is the identity and the group $H_t$ is abelian. 
If $h(x) Z(D) \neq Z(D)$, then it follows from a) that $h(x) Z(D)$ is an involution.  
If all generators are different involutions, then because of (\ref{equinvolution4}) we have 
\begin{equation} \label{involutionujequ4} 
[h(x) h(y)]^3 Z_t= h(x) [h(y) h(x)] [h(y) h(x)] h(y) Z_t= \nonumber \end{equation} 
\begin{equation} 
h(x) h(x) h(xy) h(xy) h(y) h(y) Z_t=Z_t. \nonumber \end{equation} 
Hence $h(x) h(y) Z_t$ generates a group $\Psi $ of order $3$ and $h(y) h(x) Z_t$ is the inverse of $h(x) h(y) Z_t$. 
Moreover, it follows from relations (\ref{equinvolution4}) that all three elements $h(x) Z_t$, $h(y) Z_t$, $h(xy) Z_t$ invert any element of $\Psi $. 
Hence $H_t/Z_t$ is isomorphic to $S_3$.   
If two generators of $H_t/Z_t$ coincide, then $H_t/Z_t$  is isomorphic to the cyclic group of order $ \le 2$. As 
$H_t \cap Z(D) \le Z(H_t)$ in this case the group $H_t=\langle h(x), h(y), h(xy) \rangle $ is abelian and the assertion (i) is proved. 

If for every block $t \in T$ the group $H_t$ is abelian,  
then one has $h(x) h(y)= h(y) h(x)$ for all $x \neq y \in S \setminus \{ e \}$. Hence the range of $h$ is commutative which yields that the group $D$ is abelian and the assertion (ii) follows.    \end{proof}

\begin{Theo}  \label{Prop3} Let $L$ be a Steiner-like loop. We assume that the set ${\mathcal F}=\{ f(x,x); x \in S \}$ is contained in the centre $Z(A)$ 
of the group $A$. Let $D$ be the group generated by the set 
${\mathcal H}=\{ h(z); \ z \in S \setminus \{ e \} \}$. If $S$ has more than $4$ elements, then the following properties are equivalent: 
\newline
\noindent
a) For all $x, y \in S \setminus \{ e \}$ with $x \neq y$ the identity 
\begin{equation} \label{smallequ3} h(x) h(y) h(x) h(x y) = f(x,x)  \end{equation} 
holds. 
\newline
\noindent
b) If the group $D$ is abelian, then precisely one of the following cases holds: 
\newline
\noindent
(i) For all $x \in S \setminus \{ e \}$ one has $h(x)=t$, $f(x,x)=t^4$ and  $D=\langle t \rangle $. 
\newline
\noindent
(ii) There is a subloop $U$ of the Steiner loop $S$ such that $S$ is the direct product of $U$ and $\mathbb Z_2$ and 
$h(x)=t \neq 1$ 
for all $x \in U$ whereas $h(y)=t \omega = \omega t$ with a fixed involution $\omega $ for all $y \in S \setminus U$. The elements $f(x,x)$ have the form $t^4$ or $t^4 \omega $ depending whether $x \in U$, respectively $x \notin U$. The group $D$ is the direct product of $\langle t \rangle $ and $\langle \omega \rangle $.  
\newline
\noindent
If the group $D$ is non-abelian and the Steiner loop $S$ is finite, then one has $h(x)=u \omega _x = \omega _x u$, where $u$  is  a  
fixed \ element \ of \ $A$ \ which \ centralizes \ any \ element \ of \ $D$ \ and \ for \ the \ order \ $o(\omega _x)$ \ of \ the \ element \ $\omega _x$ \ of \ $A$ \ one \ has \  $o(\omega _x) \le 2$. For every $x \in S \setminus \{ e \}$ one has  $f(x,x)=u^4$.   If $\Gamma $ is the set of different involutions in the factor group $D \langle u \rangle /\langle u \rangle = \langle \omega _x \langle u \rangle , x \in S \setminus \{ e \}  \rangle $, then $D \langle u \rangle /\langle u \rangle $ is a restricted Fischer group generated by $\Gamma $.

\medskip 
\noindent 
The Steiner-like loop $L$ is the direct product of $A$ and $S$ precisely if $D$ has order $\le 2$.  
\end{Theo}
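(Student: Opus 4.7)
My plan for the $b) \Rightarrow a)$ direction is direct verification. In case (i) the left-hand side of \eqref{smallequ3} collapses to $t^4 = f(x,x)$. In case (ii) I would enumerate the four positions of $x, y, xy$ relative to the index-$2$ subloop $U$, using $UU \subseteq U$, $U(S \setminus U) \subseteq S \setminus U$, $(S \setminus U)(S \setminus U) \subseteq U$, and exploit $\omega^2 = 1$ to collapse each product to $t^4$ or $t^4 \omega$ as predicted. In the non-abelian case the centrality of $u$ in $D$ gives $h(x) h(y) h(x) h(xy) = u^4 \omega_x \omega_y \omega_x \omega_{xy}$, and the Fischer relation $\omega_x \omega_y \omega_x = \omega_{xy}$ together with $\omega_{xy}^2 = 1$ kills the $\omega$-factor, leaving $u^4 = f(x,x)$.

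For $a) \Rightarrow b)$ I would start from Lemma~\ref{hxinvolution}, which supplies $h(x)^2 \in Z(D)$ and the classification of block factors $H_t/(H_t \cap Z(D)) \in \{1, \mathbb{Z}_2, S_3\}$. Suppose first that $D$ is abelian; then \eqref{smallequ3} reads $h(x)^2 h(y) h(xy) = f(x,x)$, and swapping $x \leftrightarrow y$ and dividing yields $h(x) f(x,x)^{-1} = h(y) f(y,y)^{-1}$, so this ratio is a universal constant $c$ and $h(x) = c f(x,x)$. Re-substituting produces the triple cocycle $c^4 f(x,x) f(y,y) f(xy, xy) = 1$ on every block. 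A case analysis on the number of values of $f(x,x)$ now separates case (i) from case (ii): one value gives $h$ constant with $f(x,x) = t^4$; two values force the $t$-fibre $U$ to be a subloop of index $2$ (by applying the cocycle on triples with two elements outside $U$, which forces both $(S \setminus U)(S \setminus U) \subseteq U$ and $\omega^2 = 1$), whence $S = U \times \mathbb{Z}_2$. The hypothesis $|S| > 4$ is needed to exclude the degenerate multi-valued configurations.

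Now assume $D$ is non-abelian; Lemma~\ref{hxinvolution} then provides some block $t$ with $H_t/Z_t \simeq S_3$, and in $\bar D = D/Z(D)$ the generators $\bar h(x)$ are involutions with $\bar h(x) \bar h(y) \bar h(x) = \bar h(xy)$, so $\bar D$ is a restricted Fischer group. Rewriting \eqref{smallequ3} as $h(x) h(y) h(x) = f(x,x) h(xy)^{-1}$ and using the Fischer relation $\bar h(y) \bar h(x) \bar h(xy) = \bar h(x)$, the lift $h(y) h(x) h(xy)$ equals $h(x) \cdot f(x,x) h(x)^{-2}$; comparing two different triples through $x$, and then swapping $x \leftrightarrow y$ against the symmetric relation $z_x^2 z_y = f(x,x)^2 z_{xy}^{-1}$ obtained from squaring $h(x) h(y) h(x)$ in two ways, forces both $f(x,x) =: f_0$ and $z_x := h(x)^2 =: z_0$ to be constant. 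I then choose $u \in A$ centralizing $D$ with $u^2 = z_0$ and set $\omega_x := h(x) u^{-1}$; the relations $\omega_x^2 = u^{-2} h(x)^2 = 1$ and $u \omega_x = \omega_x u$ are immediate, and back-substitution gives $f_0 = u^4$. The factor $D \langle u \rangle / \langle u \rangle$ is then generated by the involutions $\omega_x \langle u \rangle$ obeying the Fischer/braid relations, hence is a restricted Fischer group.

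The last assertion is short: $|D| \le 2$ forces case (i) with $t^2 = 1$, so $f \equiv 1$ and $L = A \times S$; conversely $L = A \times S$ gives $h(x) h(y) = 1$ for all distinct $x, y \ne e$, whence $h$ is a constant involution (or trivial) and $|D| \le 2$. The hard part of the whole argument is the non-abelian step: simultaneously establishing the constancy of $f(x,x)$ and of $h(x)^2$ and then producing a square root $u$ of $z_0$ inside $C_A(D)$ requires careful bookkeeping of the lifting ambiguities of the Fischer/braid relations from $\bar D$ back to $D$ together with the centrality hypothesis on $\mathcal F$.
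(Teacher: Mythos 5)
Your $b)\Rightarrow a)$ verification, your reduction of the abelian case to the relations $f(x,x)=c^{-1}h(x)$ and $h(x)h(y)h(xy)=c^{-1}$ on every block (the paper's constant $l$), and your treatment of the final assertion about $|D|\le 2$ are all fine and consistent with the paper. The genuine gap is the sentence ``a case analysis on the number of values of $f(x,x)$ now separates case (i) from case (ii) \dots the hypothesis $|S|>4$ is needed to exclude the degenerate multi-valued configurations.'' Nothing you have derived bounds the number of values by two, and $|S|>4$ alone does not do it. Concretely: let $S$ be the elementary abelian $2$-group of order $8$ (a Steiner loop whose triple system is the Fano plane), let $A=(\mathbb Z_2)^3$ written additively, put $h(x)=x$ under the obvious identification and $f(x,x)=x$. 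Then $h(x)+h(y)+h(x)+h(x+y)=x=f(x,x)$, so identity (\ref{smallequ3}) and all standing hypotheses hold with $D$ abelian, yet $h$ takes seven values and $D$ has rank $3$; every relation you derived ($f(x,x)=c^{-1}h(x)$ and $h(x)h(y)h(xy)=c^{-1}$ with $c$ trivial, all ratios $h(x)h(y)^{-1}$ involutions) is satisfied. So the step you wave at is exactly where all the work of the abelian branch lies. The paper closes it by invoking Lemma \ref{hxinvolution}: two distinct nontrivial classes among the $\omega_x\langle u\rangle$ are claimed to force a block group $H_t/Z_t\cong\mathbb Z_2\times\mathbb Z_2$, contradicting the trichotomy $\{1,\mathbb Z_2,S_3\}$. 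Whatever one makes of that appeal when $D$ is abelian (where $H_t/(H_t\cap Z(D))$ is automatically trivial, so the configuration above must be confronted head-on), you supply no substitute for it, and a count of the values of $f(x,x)$ cannot succeed without some such additional input.

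A second, smaller gap is in the non-abelian case: you assert that ``comparing two different triples through $x$'' and a swap $x\leftrightarrow y$ force $f(x,x)$ and $h(x)^2$ to be constant, but you never carry this out, and the order of your argument is inverted relative to what actually works. The paper establishes $h(x)^2=w$ once, \emph{before} any abelian/non-abelian split, via the chain from (\ref{equ19}) through (\ref{equ25}): from $f(x,x)^2=r\,h(x)^2$ and $h(x)^2h(y)^2h(xy)^2=r$ it compares the blocks joining $y,y_1$ to two different further points $x$ and $z$ (this is precisely where $|S|>4$ enters), obtaining $h(y)^2h(y_1)^{-2}=s$ with $s^2=1$, then $h(y)^4=h(y_1)^4$ and $h(y)^6=r$, whence $h(x)^2$ is constant. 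The constancy $f(x,x)=u^4$ in the non-abelian case is then a \emph{consequence} of the Fischer relations among the $\omega_x\langle u\rangle$, not an input to them. The existence of a central square root $u$ of $w$ in $A$ is asserted rather than proved in your write-up, but the paper does the same, so I do not count that against you.
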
 
\begin{proof}  First we assume that identity (\ref{smallequ3}) is satisfied.
Then we have   
\begin{equation} \label{equ4} h(y) h(x) h(y) h(xy)= f(y,y) \end{equation} 
and
\begin{equation} \label{equ5} h(xy) h(y) h(xy) h(x)= f(xy,xy). \end{equation} 
Moreover, from (\ref{smallequ3}) it follows
\begin{equation} \label{equ6} h(xy)= h(x)^{-1} h(y)^{-1} h(x)^{-1} f(x,x). \end{equation} 
Putting (\ref{equ6}) into equation (\ref{equ4}), respectively into equation (\ref{equ5}) we obtain 
\begin{equation} \label{equ7}  h(x)^{-1} h(y)^{-1} h(x)^{-1} f(x,x)= h(y)^{-1} h(x)^{-1} h(y)^{-1} f(y,y) \end{equation}
respectively  
\begin{equation} \label{equ8} h(x)^{-1} h(y)^{-1} h(x)^{-1} f(x,x) h(y) h(x)^{-1} h(y)^{-1} f(x,x)= f(xy,xy).  \end{equation} 
Putting  (\ref{equ7}) into (\ref{equ8}) we have  
\begin{equation} \label{equ9} h(y)^{-1} h(x)^{-2} h(y)^{-1} f(x,x) f(y,y)= f(xy,xy). \end{equation}
Since $f(x,x) f(y,y) f(xy,xy)^{-1} \in Z(A)$ and $\{h(x)^2; \ x \in S \setminus \{ e \} \} \in Z(D)$ (see Lemma \ref{hxinvolution} a))  
 equation (\ref{equ9}) yields 
\begin{equation} \label{equ10} f(x,x) f(y,y) f(xy,xy)^{-1}= h(x)^2 h(y)^2= h(y)^2 h(x)^2.  \end{equation} 
From (\ref{equ10}) it follows  
\begin{equation} \label{equ14} h(xy)^2 h(y)^2=f(xy, xy) f(y,y) f(x,x)^{-1}, \end{equation} 
\begin{equation} \label{equ15} h(x)^2 h(xy )^2= f(x,x) f(xy, xy) f(y,y)^{-1}, \end{equation} 
\begin{equation} \label{equ16} f(xy, xy)=f(x,x) f(y,y) h(x)^{-2} h(y)^{-2}.  \end{equation}
Putting (\ref{equ16}) into equations (\ref{equ14}) and (\ref{equ15}) we obtain 
\begin{equation} \label{equ17} h(xy)^2 h(y)^2= f(y,y)^2 h(x)^{-2} h(y)^{-2} \end{equation} 
and  
\begin{equation} \label{equ18} h(xy )^2= f(x,x)^{2} h(x)^{-4} h(y)^{-2}. \end{equation} 
The substitution of (\ref{equ18}) in  equation (\ref{equ17}) yields 
\begin{equation} \label{equ19}  f(x,x)^2 h(x)^{-2}= f(y,y)^2 h(y)^{-2}. \end{equation} 
Hence $f(x,x)^2 = r h(x)^2$ for all $x \in S \setminus \{ e \}$, where $r$ is a fixed element of $A$.   
Squaring equation (\ref{equ10}) and putting there $f(x,x)^2 = r h(x)^2$ we have 
\begin{equation} \label{equ21} h(x)^2 h(y)^2 h(x y)^2=r \ \hbox{or \ equivalently} \ h(y)^2 h(xy)^2=r h(x)^{-2} \end{equation} 
for all $x \neq y \in S \setminus \{ e \}$.  
Leaving $x$ fixed we obtain for $y_1 \neq y \in S \setminus \{ e \}$  
\begin{equation} \label{equ22} h(y_1)^2 h(x y_1)^2= r h(x)^{-2}. \end{equation} 
From (\ref{equ21}) and (\ref{equ22}) it follows 
\begin{equation} \label{equ23} h(y)^2 h(y_1)^{-2} = h(x y_1)^2 h(x y)^{-2}. \end{equation} 
Putting  $z \neq x \in S \setminus \{ e \}$ into (\ref{equ23}) we obtain 
\begin{equation} \label{equ24} h(y)^2 h(y_1)^{-2} = h(z y_1)^2 h(z y)^{-2}. \end{equation}  
Comparing  equations (\ref{equ23}) and (\ref{equ24}) one has 
\begin{equation} \label{equ25} h(y)^2 h(y_1)^{-2} = h(x y_1)^2 h(x y)^{-2}= h(z y_1)^2 h(z y)^{-2}=s \end{equation} 
for all $x \neq z \neq y \neq y_1 \in S \setminus \{ e \}$. Interchanging the variables 
$y \neq y_1 \in S \setminus \{ e \}$ we obtain  $h(y_1)^2 h(y)^{-2}=s= h(y)^2 h(y_1)^{-2}$. This yields $s^2=1$ and $h(y)^4 = h(y_1)^4$ for all $y, y_1 \in S \setminus \{ e \}$. Since $h(yy_1)^2 = h(y_1)^2= s h(y)^2$ for all 
$y_1 \neq y \in S \setminus \{ e \}$ using equation (\ref{equ21}) for $y, y_1, y y_1$ we get $h(y)^2 s^2 h(y)^4 =r$ or equivalently $h(y)^6=r$ for all $y \neq S \setminus \{ e \}$. Since for all $x, y \in S \setminus \{ e \}$ one has $h(x)^6= h(y)^6$ and $h(x)^4 =h(y)^4$ we obtain  $h(x)^2=h(y)^2=w$.  
Hence $h(x)= u \omega _x= \omega _x u$ with $u^2=w$ and the order of the element $\omega _x$ is $\le 2$. 
Since $h(x)=u \omega _x$ identity (\ref{smallequ3}) gives that 
$f(x,x)=u^4 \omega _x \omega _y \omega _x \omega _{xy}$.   

If the function $h: S \setminus \{ e \} \to A$ is constant, then $h(x)=t$ for all $x \in S \setminus \{ e \}$ and 
$D= \langle t \rangle $. This gives case (i). 

If $D$ is abelian, but the function $h$ is not constant, then $D \langle u \rangle $ is abelian and by Lemma \ref{hxinvolution} the set $\Gamma $ of different involutions in $D \langle u \rangle / \langle u \rangle $ consists of the element $\omega \langle u \rangle $ since otherwise for the different involutions $\omega _x, \omega _y$ 
the factor group $H_t/Z_t \cong \mathbb Z_2 \times \mathbb Z_2$, where $H_t =\langle u \omega _x, u \omega _y, u \omega_{xy} \rangle $ for the  line $t$ joining the points $x$ and $y$ of $S \setminus \{ e \}$. As $h(x)= u \omega _x$ with $\omega _x=1$ or $\omega _x=\omega $ and $h$ is not constant we have $u \neq u \omega $ and the group $D$ is the direct product $\langle u \rangle \times \langle \omega \rangle $ of order $\ge 2$.  

Since any two points $x,y $  in the Steiner triple system $S \setminus \{ e \}$ are joined by a line
we have  \[ h(x) h(y) h(xy)= f(x,x)h(x)^{-1} =f(y,y) h(y)^{-1}=h(y) h(x) h(xy)= l \] for suitable constant $l$ in the abelian group $D$. As $h(x)= u$ or $h(x)=u \omega $ for every $x \in S \setminus \{ e \}$ we get $l =u^3$ or $l=u^3 \omega $. 

Let 
$l= u^3$. Since the function $h$ is not constant there is a point $x \in S \setminus \{ e \}$ such that $h(x)=u$. If for all elements $z \in S \setminus \{ e, x \}$ one has $h(z)=u \omega $, then we would have a line $t=\{ z,w,zw \}$ such that 
$h(z) h(w) h(zw)= u^3 \omega $. Now we consider the set $\Delta \subset S \setminus \{ e \}$ such that for every point 
$x \in \Delta $ one has $h(x)=u$. The elements $\Delta \cup \{ e \}$ form a Steiner subloop $U$ of the Steiner loop $S$. For the elements $z \in S \setminus U$ one has $h(z)= u \omega $ and for any two elements  of $S \setminus U$ the third point of the joining line lies in $U$. Identifying the points of $U$ with the pairs $(a, e)$, $a \in U$, and a point $z \in S \setminus U$ with the pair $(e,z)$ we see that $S$ is the direct product of $U$ and $\mathbb Z_2$. 

If $l=u^3 \omega $ proceeding analogously we obtain that $S$ is the direct product of the subloop $U$ and $\mathbb Z_2$ such that $U \setminus \{ e \}$ consists of elements $z$ with $h(z)=u \omega $ whereas for the elements $x$ of the set $S \setminus U$ one has $h(x)=u$. This yields assertion (ii).   

Let $D$ be non-abelian and let $\Gamma $ be the set of all different involutions in the factor group 
$D \langle u \rangle / \langle u \rangle =  \langle \omega _x \langle u \rangle , x \in S \setminus \{ e \}  \rangle $. Since any two involutions in $\Gamma $ generates the symmetric group $S_3$ (cf. Lemma \ref{hxinvolution}) we have $f(x,x)=u^4$ and the group $D \langle u \rangle / \langle u \rangle $ is a restricted Fischer group (cf. Theorem \ref{covers}).  
Hence the assertion b) follows from a).  

Conversely, if $D$ is not abelian, then $\omega _x \omega _y \omega _x \omega _{xy}=1$ and $f(x,x)=h(x) h(y) h(x) h(xy)=u^4$. 
If the group $D$ is abelian and for all $x \in S \setminus \{ e \}$ one has $h(x)=t$, then $f(x,x)=h(x) h(y) h(x) h(xy)=t^4$. Depending whether $x \in U$ or $x \notin U$ we obtain $f(x,x)=h(x)h(y)h(x)h(xy)$ equals to $t^4$, respectively 
$t^4 \omega $.

It follows from Remark 3.1 that the loop $L$ is the direct product of $A$ and $S$ if and only if $\omega _x= \omega _y$ for all $x, y \in S \setminus \{ e \}$. Then the group $D$ is abelian and generated by $t$ and $\omega $. For $x \in U$ and 
$y \in S \setminus U$ we obtain $1=h(x) h(y)= t^2 \omega $, whereas for $x, z \in U$ we get $1=h(x) h(z)=t^2$. Hence one has 
$\omega =1$ and the last assertion of the theorem is proved.    
\end{proof}

\begin{Prop} \label{Propuj3} Let $L$ be a Steiner-like loop such that $S=\{ e,x,y, xy \}$ is the elementary abelian group of order $4$. We assume that the set ${\mathcal F}=\{ f(x,x); x \in S \}$ is contained in the centre $Z(A)$ of the group $A$. Let $D$ be the group generated by the set 
${\mathcal H}=\{ h(z); \ z \in S \setminus \{ e \} \}$. Then properties a) and b) are equivalent: 
\newline
\noindent
a) For all $x, y \in S \setminus \{ e \}$ with $x \neq y$ identity (\ref{smallequ3}) in Proposition (\ref{Prop3}) 
holds. 
\newline
\noindent
b) If the group $D$ is non-abelian, then  
it is a product $K \cdot \langle a \rangle $, where $a=h(x)$ and $t=h(x) h(y)$,  such that the abelian group $K$ has the form 
$\langle s \rangle \times Z(D)$, where the cyclic 
group $\langle s \rangle $ of order $3$ is the commutator subgroup $D'$ of $D$, the group $Z(D)$ is generated by $a^2$ and $t^3$ and $a$ inverts any element of $\langle s \rangle $. 

If the group $D$ is abelian, then 
$D$ is generated by three elements $h(x)=a$, $h(y)=b$ and $h(xy)=c$ such that $a$, $b$ and $c$ commute and one has 
\begin{equation} \label{smallleftalternativeequ2} 
f(x,x)= a l, \  f(y,y)= b l, \  f(xy, xy)= c l, \end{equation} 
where $l=a b c$.   

\medskip 
\noindent 
The Steiner-like loop $L$ is the direct product of $A$ and $S$ precisely if the group $D$ has order $\le 2$.  
\end{Prop}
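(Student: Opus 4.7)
The plan is to follow the structure of the proof of Theorem \ref{Prop3}, restricting everything to the single block $\{x, y, xy\}$ of the Steiner triple system. I write $a = h(x)$, $b = h(y)$, $c = h(xy)$ and $\alpha, \beta, \gamma$ for the central values $f(x,x), f(y,y), f(xy,xy)$. First I apply identity (\ref{smallequ3}) to each of the six ordered pairs of distinct non-identity elements, obtaining $abac = acab = \alpha$, $babc = bcba = \beta$, $cacb = cbca = \gamma$ and thereby the three commutator-like relations $bac = cab$, $abc = cba$, $acb = bca$; Lemma \ref{hxinvolution}(a) supplies $a^2, b^2, c^2 \in Z(D)$. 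In the abelian case $abac = a^2 bc = a(abc)$, so with $l := abc$ I read off $\alpha = al$, $\beta = bl$, $\gamma = cl$ as in b), and the converse is immediate.

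For the non-abelian case, Lemma \ref{hxinvolution}(b) applied to the unique block $H_t = D$ forces $D/Z(D) \cong S_3$. From $abac = \alpha$ I solve $c = (aba)^{-1}\alpha$ and, substituting into $babc = \beta$, obtain the defining relation $bab = (aba)\mu$ with $\mu := \beta\alpha^{-1} \in Z(D)$. Iterating this once and exploiting $a^2 \in Z(D)$ gives $aba^{-1} = b^{-2}(aba)\mu^2$; multiplication by $(aba)^{-1}$ then collapses, via $a^2 \in Z(D)$, to the crucial identity $b^2 = a^2\mu^2$. Setting $s := [a,b]$, a short calculation from $bab = aba\mu$ produces $s = \mu b^{-1}a$; the relation $b^2 = a^2\mu^2$ then yields both $s^2 = s^{-1}$ (hence $s^3 = 1$) and $asa^{-1} = s^{-1}$. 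Since $D/\langle s\rangle$ kills $[a,b]$ and is therefore abelian, $D' = \langle s\rangle \cong \mathbb{Z}_3$; because $a$ inverts the order-three element $s$, $\langle s\rangle \cap Z(D) = \{1\}$, so $K := \langle s\rangle \cdot Z(D) = \langle s\rangle \times Z(D)$ has index two in $D$ (from $D/Z(D) \cong S_3$ and $K/Z(D) \cong A_3$), and $a \notin K$ yields $D = K \cdot \langle a\rangle$. Expanding $(ab)^3 = a(bab)(ab) = a(aba\mu)(ab) = a^4 b^2 \mu = a^6 \mu^3$ (the last step using $b^2 = a^2\mu^2$) shows $t^3 = a^6\mu^3$, so $\mu^3 \in \langle a^2, t^3\rangle$; combined with $b^2 = a^2\mu^2$, a chase through the central extension $1 \to Z(D) \to D \to S_3 \to 1$ then confirms $Z(D) = \langle a^2, t^3\rangle$.

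For the converse b) $\Rightarrow$ a), the presentation of $K \cdot \langle a\rangle$ makes $\overline{abac} = \bar a\bar b\bar a\bar c = 1$ in $D/Z(D) \cong S_3$ (since $\bar c = \bar a\bar b\bar a$), so $abac \in Z(D) \subseteq Z(A)$ can be taken as $f(x, x)$, and analogously for the other two values, recovering identity (\ref{smallequ3}). The direct product statement is immediate: $L = A \times S$ iff $f \equiv 1$, which for $f(x, y) = h(x)h(y)$ on distinct non-identity $x, y$ forces $h$ to be a constant of order $\le 2$, i.e.\ $|D| \le 2$, and conversely. The main obstacle is the non-abelian case: with only the single block of $S$ available --- in contrast to Theorem \ref{Prop3}, where further points of $S$ provide additional constraints --- the cyclicity of $D'$ and the identification $Z(D) = \langle a^2, t^3\rangle$ must be extracted entirely from the three $f$-equations on $\{x, y, xy\}$.
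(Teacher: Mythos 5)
Your proposal follows the same route as the paper's proof. The abelian case is handled identically (substitute $a,b,c$ into (\ref{smallequ3}) and read off $f(x,x)=al$ with $l=abc$), and the non-abelian case uses the same skeleton: Lemma \ref{hxinvolution} forces $D/Z(D)\cong S_3$, one identifies $D'$ as a cyclic group of order $3$ inverted by $a$ and meeting $Z(D)$ trivially, so $K=D'\times Z(D)$ has index $2$ and $D=K\cdot\langle a\rangle$. Your computations are just a change of generators relative to the paper's: you work from $bab=aba\mu$ with $\mu=\beta\alpha^{-1}$ and $s=[a,b]=\mu b^{-1}a$, where the paper works from $a^{-1}ta=t^{-1}z$ and $s=t^{2}z^{-1}$. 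I checked the steps $aba^{-1}=b^{-2}(aba)\mu^{2}$, $b^{2}=a^{2}\mu^{2}$, $s^{3}=1$ and $asa^{-1}=s^{-1}$; they are correct (squaring $bab=aba\mu$ gives $b^{2}=a^{2}\mu^{2}$ even more directly).

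The one genuine gap is precisely the step you defer to ``a chase through the central extension'': the identification $Z(D)=\langle a^{2},t^{3}\rangle$. No such chase can be completed, because the equality does not follow from identity (\ref{smallequ3}). The reason is that $D=\langle h(x),h(y),h(xy)\rangle$ and $h(xy)=a^{-1}b^{-1}a^{-1}f(x,x)$, where $f(x,x)$ is central but need not lie in $\langle a,b\rangle=\langle a,t\rangle$; thus $D$ is $\langle a,t\rangle$ enlarged by a central element, which can enlarge $Z(D)$ beyond $\langle a^{2},t^{3}\rangle$. Concretely, take $A=D=S_3\times\mathbb{Z}_2$ with $h(x)=((12),0)$, $h(y)=((13),0)$, $h(xy)=((23),1)$: all six instances of (\ref{smallequ3}) hold with central values $f(x,x)=f(y,y)=(e,1)$ and $f(xy,xy)=(e,0)$, yet $a^{2}=t^{3}=1$ while $Z(D)=\{e\}\times\mathbb{Z}_2$. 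To be fair, the paper's own proof has exactly the same hole --- it asserts without justification that $D$ is generated by $a$ and $t$ and then that $Z(D)=\langle a^{2},t^{3}\rangle$ --- so you have reproduced the published argument faithfully, including its one unproved (and, as stated, unprovable) claim; everything else in your write-up, including the converse direction and the direct-product criterion (which, as in the paper, really characterizes $h$ being constant of order at most $2$), matches the paper's proof.
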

\begin{proof} 
We assume that condition a) holds. 
The group $D$ is generated by the elements $h(x)$, $h(y)$, $h(xy)$. If $D$ is not abelian, then according to 
Lemma \ref{hxinvolution} the set $\{h(x)^2, h(y)^2, h(xy)^2 \}$ is contained in the centre $Z(D)$ of $D$ and the factor group $D/Z(D)$ is isomorphic to $S_3$. The group $D$ is generated by the elements $h(x)=a$ and 
$h(x) h(y)=t$ such that $a^2 \in Z(D)$ and $t^3 \in Z(D)$. As $a^{-1} t a= t^{-1} z$ with a suitable element $z \in Z(D)$ one has $t^3= a^{-1} t^3 a= t^{-3} z^3$ and hence $z^3=t^6$. Since 
$a^2 \in Z(D)$  we have  $a^{-2n} t^{-r} a^{2n} t^r=1$ and 
$a^{-2n-1} t^{-r} a^{2n+1} t^r= a^{-1} t^{-r} a t^r=t^{2r} z^{-r}$ for all $n, r \in \mathbb N$. As $t^3 \in Z(D)$ one has 
$a^{-1} t^{-3r} a t^{3r}= 1$, $a^{-1} t^{-3r-1} a t^{3r+1}= t^2 z^{-1}$, $a^{-1} t^{-3r-2} a t^{3r+2}= t^4 z^{-2}$ for all $r \in \mathbb N$. 
Hence the commutator subgroup $D'$ of $D$ generated by $t^2 z^{-1}=s$ is the cyclic group of order $3$ which is not contained in $Z(D)$. 
Therefore $D' \cap Z(D) = \{ 1 \}$ and the direct product $K=D' \times Z(D)$ is a normal subgroup of index $2$ in $D$. Since $a^2 \in Z(D)$ the 
element $a$ induces on $K$ an involution fixing $Z(D)$ elementwise and acting on $\langle s \rangle $ by $a^{-1} s a= s^{-1}$. Hence 
$D=K \cdot \langle a \rangle $ with $K= \langle s \rangle \times Z(D)$ such that $Z(D)$ is generated by $a^2$ and $t^3$. 

Conversely, the element $h(xy)$ has the form $h(xy)= t a z$, $z \in Z(D)$. Using $h(y)=a^{-1} t$ and putting $f(x,x)=a^2 z$, $f(y,y)=t^3 z$, 
$f(xy,xy)= t^3 a^2 z$ we see that identity (\ref{smallequ3}) is satisfied. 
 
If the group $D$ is abelian, then 
putting $h(x)= a$, $h(y) = b$ and $h(xy ) = c$ into identity (\ref{smallequ3}) we obtain identity (\ref{smallleftalternativeequ2}) with 
$l=a b c$. 
If identity (\ref{smallleftalternativeequ2}) holds, then identity (\ref{smallequ3}) is satisfied. 

If the group $D$ is non-abelian, then it contains two non-commuting elements $h(x)=a$ and $h(x)h(y)=t$. By Remark 3.1 the loop $L$ is the direct product of $S$ and $A$ only if $t=1$. Since $a^{-1} t a= t^{-1} z$ with a suitable element 
$z \in Z(D)$ and $a \notin Z(D)$ we get  a contradiction. Hence the group $D$ is abelian. In this case the loop $L$ is the direct product of $S$ and $A$ if and only if $a=b=c$ and for the order $o(a)$ of $a$ one has $o(a) \le 2$. This proves the last assertion.  
\end{proof}

\begin{Prop} \label{Prop4} 
Let $S$ be a Steiner loop and $A$ be an abelian group. Let $f$ be a mapping  $S \times S \to A$ such that for 
$x \neq y \in S \setminus \{ e \}$ one has $f(x,y)=h(x) h(y)$ for a mapping $h: S \setminus \{ e \} \to A$ and 
$f(x,e)=f(e,x)=1$ for all $x \in S$. Then the following assertions are equivalent: 
\newline
\noindent
a) For all $x, y \in S \setminus \{ e \}$ and $x \neq y$ the identity 
\begin{equation} \label{equ13} h(x)^2 h(y)^2  = f(x,x) f(y,y) f(x y, x y)^{-1} \end{equation} 
holds. 
\newline
\noindent
b) If $S$ has more than $4$ elements, then the elements $h(x)$ have the form $h(x)=u \omega _x$, where $u$ is a   
fixed element of $A$ and for the order $o(\omega _x )$ of the element $\omega _x $  
 one  has  $o(\omega _x ) \le 2$.  The  elements $f(x,x)$ have  the   
 form $f(x,x)=u^4 \rho _x$ such that for the order $o(\rho _x )$ of the element $\rho _x$  
 one  has $o(\rho _x) \le 2$  and  $\rho _x \rho _y = \rho _{xy}$ for  all $x \neq y \in S \setminus \{ e \}$.  
Let $\Gamma $ be the set of the different involutions in $\{ \omega _x , x \in S \setminus \{ e \} \}$. 
The group $D \langle u \rangle $ is the direct product $\langle u \rangle \times \langle \Gamma \rangle $. 

If $S=\{e, x, y, xy \}$ is the elementary abelian group of order $4$, then one has 
\begin{equation} \label{smallautomorphicequ1} h(x) =  \alpha ,\ \ h(y) = \beta ,\ \ h(x y) =\gamma, \end{equation} 
\begin{equation} f(x,x) = \alpha^2 \beta \gamma \rho_x, \  \ f(y,y) = \alpha \beta^2 \gamma \rho_y,\ \ 
f(xy, xy) = \alpha \beta \gamma^2 \rho_x \rho_y, \nonumber \end{equation}  
where $\alpha, \beta, \gamma $ are elements of $A$, whereas the orders of $\rho_x, \rho_y $ are at most 2.

\medskip
If the Steiner loop $S$ has more than four elements, then 
the Steiner-like loop $L$  is the direct product of $A$ and $S$ precisely if the group $D$ is the direct product $\langle u \rangle \times \langle \omega \rangle $ such that for the orders $o(u)$ as well as $o(\omega )$ one has $o(u) \le 2$ and $o(\omega ) \le 2$ and 
$\rho _x=1$ for all $x \in S \setminus \{ e \}$.  If $S=\{e, x, y, xy \}$ is the elementary abelian group of order $4$, then  $L$  is the direct product of $A$ and $S$ precisely if the group $D$ has order $\le 2$ and $\rho _x=1$ for all 
$x \in S \setminus \{ e \}$.
\end{Prop}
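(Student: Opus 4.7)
The plan is to follow the pattern of Theorem \ref{Prop3} and Proposition \ref{Propuj3}, taking full advantage of the commutativity of $A$. I treat the implication (a)$\Rightarrow$(b) first, splitting into the sub-cases $|S|>4$ and $|S|=4$, then verify (b)$\Rightarrow$(a) by direct substitution, and finally derive the direct product criterion from Remark \ref{rem1}.

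For (a)$\Rightarrow$(b) in the case $|S|>4$, the key observation is that (\ref{equ13}) is literally the relation (\ref{equ10}) inside the proof of Theorem \ref{Prop3}, and the chain of substitutions (\ref{equ14})--(\ref{equ25}) there uses only this relation together with the centrality of the $f(x,x)$; the latter is automatic here since $A$ is abelian. I therefore reuse that chain to conclude $h(x)^2=w$ for all $x\in S\setminus\{e\}$. Choosing any square root $u$ of $w$ in $A$ and setting $\omega_x=u^{-1}h(x)$ gives $h(x)=u\omega_x$ with $\omega_x^2=1$; substituting back into (\ref{equ13}) yields $u^4=f(x,x)f(y,y)f(xy,xy)^{-1}$, so $f(x,x)=u^4\rho_x$ with the multiplicative rule $\rho_x\rho_y=\rho_{xy}$ for $x\neq y\in S\setminus\{e\}$. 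To deduce $\rho_x^2=1$ I apply this rule twice inside the block $\{x,y,xy\}$: from $\rho_x\rho_y=\rho_{xy}$ and, using $x\cdot(xy)=y$ in the Steiner loop, $\rho_x\rho_{xy}=\rho_y$, the combination gives $\rho_x^2\rho_y=\rho_y$, i.e.\ $\rho_x^2=1$. The structural description of $D\langle u\rangle$ then reads off from the fact that the $\omega_x$ are pairwise commuting involutions in the abelian group $A$.

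For $|S|=4$ the Steiner loop $\{e,x,y,xy\}$ has the single block $\{x,y,xy\}$, so (\ref{equ13}) amounts to three equations in the three unknowns $f(x,x)$, $f(y,y)$, $f(xy,xy)$. Writing $h(x)=\alpha$, $h(y)=\beta$, $h(xy)=\gamma$ and solving the system (for instance by dividing pairs of equations to eliminate one unknown, which works because $A$ is abelian), I obtain $f(x,x)^2=\alpha^4\beta^2\gamma^2$, forcing $f(x,x)=\alpha^2\beta\gamma\rho_x$ with $\rho_x^2=1$; the formulas for $f(y,y)$ and $f(xy,xy)$ follow symmetrically, the index $xy$ carrying the product $\rho_x\rho_y$. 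The converse (b)$\Rightarrow$(a) in both cases is a mechanical expansion of the prescribed formulas, shown to satisfy (\ref{equ13}) by abelianness.

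For the direct product characterization I invoke Remark \ref{rem1}: $L=A\times S$ exactly when $f\equiv 1$, equivalently $h(x)h(y)=1$ for $x\neq y\in S\setminus\{e\}$ and $f(x,x)=1$ for all $x$. Substituting the description from (b), in the case $|S|>4$ the equations $u^2\omega_x\omega_y=1$ taken over three distinct points force all $\omega_x$ to coincide with a single involution $\omega$ and give $u^2=1$, after which $u^4\rho_x=1$ yields $\rho_x=1$; in the case $|S|=4$ the conditions $\alpha\beta=\beta\gamma=\alpha\gamma=1$ force $\alpha=\beta=\gamma$ of order $\le 2$, hence $|D|\le 2$ together with $\rho_x=1$. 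The principal difficulty is the first sub-case of (a)$\Rightarrow$(b): one must verify that the long derivation inside the proof of Theorem \ref{Prop3} uses only (\ref{equ10}), and not the stronger hypothesis (\ref{smallequ3}), and then execute the short two-instance argument in one block that yields $\rho_x^2=1$.
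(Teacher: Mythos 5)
Your proposal is correct and follows essentially the same route as the paper: both identify (\ref{equ13}) with relation (\ref{equ10}) from the proof of Theorem \ref{Prop3}, reuse the chain through (\ref{equ21})--(\ref{equ25}) to get $h(x)^2=w$ and $h(x)=u\omega_x$, read off $f(x,x)=u^4\rho_x$ with $\rho_x\rho_y=\rho_{xy}$, handle $|S|=4$ by the explicit three-equation computation, and derive the direct-product criterion from Remark \ref{rem1}. The only (harmless) local variation is that you obtain $o(\rho_x)\le 2$ from the two instances $\rho_x\rho_y=\rho_{xy}$ and $\rho_x\rho_{xy}=\rho_y$ within a block, whereas the paper gets it from $f(x,x)^2=r\,h(x)^2=u^8$.
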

\begin{proof}  
Identity (\ref{equ13}) is the same as identity (\ref{equ10}) in the proof of Theorem \ref{Prop3}.  
If $|S| > 4$, then using (\ref{equ21}) it follows as in the proof of Theorem \ref{Prop3} that 
for all $x \in S \setminus \{ e \}$ one has 
$h(x)= u \omega _x= \omega _x u$, where  $o(\omega _x ) \le 2$. Since $f(x,x)^2= r h(x)^2= u^8$ one has 
$f(x,x)= u^4 \rho_x = \rho_x u^4$, where $o( \rho_x) \le 2$.  
Using this relation identity (\ref{equ13}) yields that $\rho _x \rho _y =\rho _{xy}$ for all 
$x \neq y \in S \setminus \{ e \}$. Since the group $D \langle u \rangle $ is generated by $D$ and $\langle u \rangle $ we have $D \langle u \rangle =\langle u \rangle \times \langle \Gamma \rangle $, where $\Gamma $ is the set of the different involutions in $\{ \omega _x , x \in S \setminus \{ e \} \}$.

If $S$ is the elementary abelian group of order $4$, then identity (\ref{equ21}) yields  
that for all $x \in S \setminus \{ e \}$ one has $f(x,x)^2=r h(x)^2$ with 
$r=h(x)^2 h(y)^2 h(xy)^2$.  Putting $h(x)= \alpha , h(y) = \beta $ and $h(xy ) = \gamma $ into this relation we get 
\[ f(x,x)^2 = \alpha^{4}\beta^2\gamma^2,\ \ f(y,y)^2 = \alpha^2\beta^{4}\gamma^2,\ \  f(xy, xy)^2 = \alpha^2\beta^2\gamma^{4}. \] 
As the group $A$ is abelian it follows that 
\[ f(x,x)= \alpha^{2}\beta \gamma \rho _x,\ \ f(y,y)= \alpha \beta^{2} \gamma \rho _y,\ \  f(xy, xy)= \alpha \beta \gamma^{2} \rho _{xy} \] 
such that the orders of $\rho _x, \rho _y, \rho _{xy}$ are at most $2$. 
Moreover, identity (\ref{equ13}) gives $\rho _x \rho _y= \rho _{xy}$. Hence from assertion a) it follows b). 

Conversely, if assertion b) is satisfied, then identity (\ref{equ13}) holds. Hence a) and b) are equivalent. 

By Remark \ref{rem1} the loop $L$ is the direct product of $A$ and $S$ if and only if $f(x,y)=1$ for all $x,y \in S$. 
First we assume that $|S| > 4$. Since $h(x) h(y)=u^2 \omega _x \omega _y= 1 = h(x) h(xy)=u^2 \omega _x \omega _{xy}$ for all $x \neq y \in S \setminus \{ e \}$ it follows $\omega _x= \omega $ for all $x \in S \setminus \{e \}$ with $o(\omega ) \le 2$ and hence  $u^2=1$.  As $f(x,x)=u^4 \rho _x =1$ for all $x \in S \setminus \{ e \}$ we get $\rho _x =1$ for all 
$x \in S \setminus \{ e \}$.  
 
If $S$ is the elementary abelian group of order $4$, then $f(x,y)=1$ for all $x,y \in S$ precisely if 
$\alpha = \beta = \gamma $ such that $o(\alpha ) \le 2$ and $\rho _x =1$ for all $x \in S \setminus \{ e \}$.  
\end{proof}

The structure of the group $D \langle u \rangle $ in Proposition \ref{Prop4} differs significantly from the structure of this group in Theorem \ref{Prop3} since we can not use Lemma \ref{hxinvolution}.

\section{Elementary properties of Steiner-like loops}

\begin{Prop} \label{flexible} 
A Steiner-like loop $L$ is flexible if and only if the set 
$\{ f(x,y); x, \\y \in S \}$ is contained in the centre of the group $A$. 
\end{Prop}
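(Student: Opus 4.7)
The plan is to reduce the flexibility identity in $L$ to an equation inside the group $A$, and then treat the two implications separately.

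Using the extension rule (\ref{equextension}) to expand $[(x,\xi)(y,\eta)](x,\xi)$ and $(x,\xi)[(y,\eta)(x,\xi)]$, the Steiner identities $yx=xy$ and $x(xy)=y$ force both first coordinates to equal $y$, so flexibility is equivalent to the $A$-valued identity
\[
f(xy,x)\,f(x,y)\,\xi\eta\xi \;=\; f(x,xy)\,\xi\,f(y,x)\,\eta\,\xi
\]
for all $x,y\in S$ and all $\xi,\eta\in A$. The boundary cases $x=e$ or $y=e$ are immediate from $f(\cdot,e)=f(e,\cdot)=1$, so only the diagonal $x=y$ and the generic case $x\neq y$ in $S\setminus\{e\}$ need attention.

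For the \emph{if} direction I would assume every value of $f$ lies in $Z(A)$. Central factors pass freely through $\xi$ and $\eta$, so both sides reduce to $\xi\eta\xi$ times a central product and the identity collapses to $f(xy,x)f(x,y)=f(x,xy)f(y,x)$. For $x\neq y$ every factor equals $h(\cdot)h(\cdot)$; centrality of $h(x)h(y)$ applied to $h(x)h(y)\cdot h(x)=h(x)\cdot h(x)h(y)$ followed by left cancellation gives $h(x)h(y)=h(y)h(x)$, and the same argument with the other two pairs shows that $h(x), h(y), h(xy)$ commute pairwise. Under this pairwise commutativity the desired equality is a one-line abelian rearrangement of $h(xy)h(x)^2h(y)$.

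For the \emph{only if} direction, starting from the flexibility identity I would cancel $\xi$ on the right, then $\eta$ on the right, to obtain $f(xy,x)f(x,y)\,\xi = f(x,xy)\,\xi\,f(y,x)$ for every $\xi\in A$. Setting $\xi=1$ yields $f(xy,x)f(x,y)=f(x,xy)f(y,x)$; substituting this back and cancelling the common left factor $f(x,xy)$ leaves $f(y,x)\xi=\xi f(y,x)$, i.e.\ $f(y,x)\in Z(A)$. Letting $(x,y)$ run over $S\times S$ covers every off-diagonal value; the diagonal $x=y$ is caught by the same equation, since then $f(xy,x)=f(x,xy)=1$ and the identity specialises directly to $f(x,x)\xi=\xi f(x,x)$.

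The argument is essentially book-keeping in the non-abelian group $A$; the only step requiring a genuine idea is the extraction of pairwise commutativity of $h(x), h(y), h(xy)$ from centrality of the $f$-values in the \emph{if} direction, and this step is short once one notices the cancellation after multiplying a central $h(a)h(b)$ by $h(a)$ on both sides.
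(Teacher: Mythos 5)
Your proof is correct and follows essentially the same route as the paper: both reduce flexibility to the functional equation $f(xy,x)f(x,y)\xi = f(x,xy)\xi f(y,x)$, specialise $\xi=1$ and substitute back to extract centrality, and conversely use centrality of the products $h(x)h(y)$ to deduce commutativity of the range of $h$ (your inline cancellation argument is exactly the content of the paper's Lemma 5.1). The only cosmetic difference is that you derive the functional equation directly from the multiplication formula, whereas the paper cites it from Nagy--Strambach.
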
 
\begin{proof} According to Proposition 3.10 in \cite{nagy}, p. 767, the loop $L$ is flexible if and only if 
\begin{equation} \label{flexibleequ} f(xy,x) f(x,y) \xi = f(x,xy) \xi f(y,x) \end{equation} 
holds for all $x,y \in S$ and $\xi \in A$. Putting $\xi =1$ into (\ref{flexibleequ}) one has 
\begin{equation} \label{flexibleequ2} f(xy,x) f(x,y) = f(x,xy) f(y,x).  \nonumber \end{equation} 
Using this relation identity (\ref{flexibleequ}) reduces to $f(y,x) \xi = \xi f(y,x)$. Hence we obtain $f(x,y) \in Z(A)$ for all $x,y \in S$. 
\newline
If $f(x,y) \in Z(A)$ for all $x,y \in S$, then using the function $h$ we obtain $h(x) h(y) \in Z(A)$. This yields that the range of $h$ is commutative (see Lemma \ref{hxcommutative}) and hence identity (\ref{flexibleequ}) is true.  
\end{proof}

\noindent
In a flexible Steiner-like loop $L$ for any element the left inverse and the right inverse coincide and the group $A$ is contained in the nucleus of $L$.

\noindent
\begin{Prop} \label{leftalternative} For a Steiner-like loop $L$ the following properties are equivalent: 
\newline
a) $L$ is right alternative,  
\newline
b) $L$ satisfies the right inverse property,  
\newline
c) the set ${\mathcal F}= \{ f(z, z); \ z \in S \}$ is contained in the centre of $A$ and identity (\ref{smallequ3}) of Theorem \ref{Prop3} holds. 
\end{Prop}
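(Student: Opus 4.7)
My plan is to translate both conditions (a) and (b) into the same functional equation on the factor system $f$, and then identify this equation with condition (c). Using the multiplication rule (\ref{equextension}) together with the formula for the right inverse $x^\rho$ recorded just after (\ref{equextension}), both the right alternative identity (\ref{equc4}) and the right inverse property (\ref{equc6}) will collapse to a single constraint on $f$, so the equivalence (a) $\Leftrightarrow$ (b) follows at once.

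Concretely, writing $x = (a, \xi)$ and $y = (b, \eta)$ and using the Steiner loop identities $a \cdot a = e$ and $(ba) \cdot a = b$, the right alternative identity becomes
\[ f(ba, a)\, f(b, a)\, \eta = \eta\, f(a, a), \quad a, b \in S,\ \eta \in A. \]
Specialising $b = e$ forces $f(a,a)\in Z(A)$ for all $a$, hence ${\mathcal F} \subseteq Z(A)$; and the analogous expansion of $(y \cdot x) \cdot x^\rho$, using $x^\rho = (a, \xi^{-1} f(a, a)^{-1})$, reproduces exactly the same equation, settling (a) $\Leftrightarrow$ (b). Once ${\mathcal F} \subseteq Z(A)$ is secured, the functional equation simplifies to $f(ba, a) f(b, a) = f(a, a)$; this holds trivially when $a = e$, $b = e$, or $b = a$, while for distinct $a, b \in S \setminus \{e\}$ the Steiner-like hypothesis $f(x, y) = h(x) h(y)$, together with the commutativity of $S$, rewrites it as
\[ h(ab)\, h(a)\, h(b)\, h(a) = f(a, a). \]

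The closing step is to match this last identity with identity (\ref{smallequ3}). Since $\{a, ab, b\}$ is a block of the Steiner triple system attached to $S$, identity (\ref{smallequ3}) applied to the pair $(a, ab)$ (whose product in $S$ is $b$) reads $h(a) h(ab) h(a) h(b) = f(a, a)$; multiplying on the left by $h(a)^{-1}$ and on the right by $h(a)$, and using the centrality of $f(a, a)$ on the right-hand side, yields exactly $h(ab) h(a) h(b) h(a) = f(a, a)$, while the reverse conjugation gives the converse. The only delicate point is this bookkeeping step: the equation arising directly from right alternativity is a cyclic shift of the letters appearing in (\ref{smallequ3}), and it is precisely the centrality of $f(a, a)$ that permits the two forms to be interchanged.
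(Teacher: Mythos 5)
Your proposal is correct and follows essentially the same route as the paper: both conditions a) and b) are reduced to the single functional equation $f(ba,a)f(b,a)\eta=\eta f(a,a)$ (the paper's identity (\ref{equ26})), centrality of $\mathcal F$ is extracted by a degenerate specialization, and the remaining equation is identified with (\ref{smallequ3}) up to a cyclic shift that centrality of $f(a,a)$ licenses. The only cosmetic difference is that the paper absorbs the cyclic shift by substituting $\eta=h(xy)$ directly, whereas you set $\eta=1$ and then conjugate by $h(a)$ and relabel the block; the two devices are interchangeable.
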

\begin{proof} For all $(x, \xi )$ and $(y, \eta )$ of $L$ identity (\ref{equc4}) holds if and only if the identity
\begin{equation} \label{equ26} \eta ^{-1} f(xy,x) f(y,x) \eta =  f(x,x)    \end{equation} 
is satisfied. If $x=y$ we obtain that the set ${\mathcal F}$ is contained in the 
centre of $A$.  Using for $x \neq y$ the mapping $h$ we obtain with $\eta =h(xy)$ identity (\ref{smallequ3}) of Theorem  \ref{Prop3}. 

The loop $L$ satisfies the right  inverse property if and only if  for all $(x, \xi )$ and $(y, \eta )$ of $L$ identity (\ref{equc6}) holds, where  
$(x, \xi )^{\rho }$ is the right inverse of $(x, \xi )$. Since  
$(x, \xi )^{\rho }=(x, \xi ^{-1} f(x,x)^{-1})$ identity (\ref{equc6}) yields again identity (\ref{equ26}).  
\end{proof}

\noindent
In a right alternative Steiner-like loop $L$ for any element the left inverse and the right inverse coincide. 

\noindent
\begin{Prop} \label{rightalternative} For a Steiner-like loop $L$ the following properties are equivalent: 
\newline
a) $L$ is left alternative,  
\newline
b) $L$ satisfies the left inverse property,  
\newline
c) the sets ${\mathcal F}= \{ f(z, z); \ z \in S \}$ and ${\mathcal K}=\{h(x) h(y); \ x, y \in S \setminus \{ e \}, x \neq y \}$ are contained in the centre of $A$ and identity 
(\ref{smallequ3}) of Theorem \ref{Prop3} is satisfied.    
\end{Prop}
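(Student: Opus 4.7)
The plan is to mirror the proof of Proposition \ref{leftalternative} by expanding the left alternative identity (\ref{equc2}) and the left inverse identity (\ref{equc3}) in coordinates $X=(x,\xi)$, $Y=(y,\eta) \in S \times A$, using the extension multiplication (\ref{equextension}), the Steiner-loop relation $x \cdot (xy) = y$, and the fact that in $L$ the left inverse of $(x,\xi)$ is $(x, f(x,x)^{-1}\xi^{-1})$ (because $x^{-1}=x$ and $f(x^{-1},x)=f(x,x)$). Both expansions will reduce to the same functional equation, which will be shown to be equivalent to condition (c).

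First, to deduce (c) from (a), I compute $X \cdot (XY) = (y, f(x,xy)\,\xi\, f(x,y)\,\xi\,\eta)$ and $(XX) \cdot Y = (y, f(x,x)\,\xi^{2}\eta)$, which equate to the identity $f(x,xy)\,\xi\, f(x,y) = f(x,x)\,\xi$ for all $x,y \in S$ and $\xi \in A$. Putting $x=y$ yields $\xi f(x,x) = f(x,x)\xi$, so $\mathcal{F} \subset Z(A)$. For $x \neq y$ in $S \setminus \{e\}$ the identity becomes $\xi\, f(x,y)\, \xi^{-1} = f(x,xy)^{-1} f(x,x)$; since the right-hand side is independent of $\xi$, the left-hand side must be as well, forcing $f(x,y) = h(x)h(y) \in Z(A)$, i.e.\ $\mathcal{K} \subset Z(A)$. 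With this, the equation reads $h(x) h(xy) h(x) h(y) = f(x,x)$, which, after substituting $y \mapsto xy$ and using $x \cdot xy = y$, is exactly identity (\ref{smallequ3}).

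Next, to deduce (c) from (b), I expand $X^{\lambda} \cdot (XY) = Y$ and obtain, after cancelling the common $\eta$ on both sides, the conjugation equation $\xi^{-1} f(x,y)\, \xi = f(x,x)\, f(x,xy)^{-1}$ for all $\xi \in A$. The case $x=y$ gives $\mathcal{F} \subset Z(A)$, and the case $x \neq y$ in $S \setminus \{e\}$ forces $h(x)h(y) \in Z(A)$ (so $\mathcal{K} \subset Z(A)$) and the remaining equality becomes identity (\ref{smallequ3}). The converses (c) $\Rightarrow$ (a) and (c) $\Rightarrow$ (b) are direct verifications: since $h(x)h(y) \in Z(A)$ the element $\xi$ commutes past $f(x,y)$, and (\ref{smallequ3}) in its equivalent form $h(x) h(xy) h(x) h(y) = f(x,x)$ then collapses both displayed equations, so (\ref{equc2}) and (\ref{equc3}) hold in $L$.

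The main obstacle I expect is recognizing why the centrality of the full set $\mathcal{K}$ now appears, in contrast with Proposition \ref{leftalternative} where only $\mathcal{F} \subset Z(A)$ was required. The reason is geometric in the equation: in the right alternative case the arbitrary group element $\eta$ conjugates the \emph{product} $f(xy,x)f(y,x)$, so only that product needs to be central; in the left alternative (and in the left inverse) case the element $\xi$ is sandwiched \emph{between} $f(x,xy)$ and $f(x,y)$, and the arbitrariness of $\xi$ therefore pins down the individual factor $h(x)h(y)$ as a central element of $A$. Once this distinction is handled cleanly, the rest of the argument is essentially a symmetric rewrite of the proof of Proposition \ref{leftalternative}.
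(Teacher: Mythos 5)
Your proof is correct and follows essentially the same route as the paper's: both directions expand the identities (\ref{equc2}) and (\ref{equc3}) in coordinates to obtain the conjugation equations $f(x,xy)\,\xi\, f(x,y)=f(x,x)\,\xi$ and $\xi^{-1}f(x,y)\,\xi\, f(x,xy)=f(x,x)$, extract $\mathcal{F}\subset Z(A)$ from the case $x=y$ and $\mathcal{K}\subset Z(A)$ from the $\xi$-independence, and then reduce to identity (\ref{smallequ3}) via the substitution $y\mapsto xy$. Your closing remark correctly identifies why $\mathcal{K}$ must be central here but not in Proposition \ref{leftalternative}; no gaps.
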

\begin{proof} For all $(x, \xi )$ and $(y, \eta )$ of $L$  identity (\ref{equc2}) holds if and only if the identity
\begin{equation} \label{equc5}  f(x, xy) \xi f(x,y)= f(x,x)  \xi  \end{equation} 
is satisfied. Putting in this identity  $x=y$ it follows that the set 
${\mathcal F}$ is contained in the centre of $A$.    
Using for $x \neq y$ the mapping $h$ we obtain 
\begin{equation}  \label{equ27}  h(x) h(xy) \xi h(x) h(y) \xi^{-1} =f(x,x). \end{equation}
For any $x$ and $y$, $x \neq y$, the product $h(x) h(y)$ is contained 
in the centre of $A$ and hence identity (\ref{equ27}) reduces to identity (\ref{smallequ3}) of 
Theorem \ref{Prop3}. 

The loop $L$ satisfies the left  inverse property if and only if  for all $(x, \xi )$ and $(y, \eta )$ of $L$ identity (\ref{equc3}) holds, where 
$(x, \xi )^{\lambda }$ is the left inverse of $(x, \xi )$. Since  
$(x, \xi )^{\lambda }=(x, f(x,x)^{-1} \xi ^{-1})$ identity (\ref{equc3}) yields   
\begin{equation} \label{equequ1} \xi ^{-1} f(x,y) \xi f(x,xy) = f(x,x) \end{equation} 
for all $x, y \in S$ and $\xi \in A$.  Therefore the set 
$\{ f(x, x); \ x \in S \}$ as well as all products $h(x) h(y)$, $x, y \in S \setminus \{ e \}$ with 
$x \neq y$ are contained in the centre of $A$. Hence identity (\ref{equequ1}) reduces to identity (\ref{smallequ3}). 
\end{proof}

\noindent
A Steiner-like loop $L$ which has the left inverse property has the inverse property (Propositions 
\ref{leftalternative}, \ref{rightalternative}). In a left as well as a right alternative Steiner-like loop  the values $f(x,x)$ are determined by the function $h$. Any  left alternative Steiner-like loop $L$ is also right alternative.

\noindent
\begin{Prop} \label{crossinverse} A Steiner-like loop $L$ satisfies the cross  inverse property if and only if the group $A$ is commutative and identity (\ref{smallequ3}) of Theorem \ref{Prop3} holds.    
\end{Prop}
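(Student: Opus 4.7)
The plan is to translate the cross inverse identity $(x\cdot y)\cdot x^{\rho}=y$ on the Steiner-like loop $L$, realized on $S\times A$ with multiplication $(x,\xi)(y,\eta)=(xy,f(x,y)\xi\eta)$, into a single functional equation on $f$, and then to analyse it case by case using the Steiner identities in $S$.

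First I compute $((x,\xi)(y,\eta))(x,\xi)^{\rho}$. Since $(x,\xi)^{\rho}=(x,\xi^{-1}f(x,x)^{-1})$ by Proposition~3.3 of \cite{nagy}, the first coordinate becomes $(xy)x$, which equals $y$ by the totally symmetric law $x\cdot(x\cdot y)=y$ together with the commutativity of the Steiner loop $S$. The second coordinate is
\[
f(xy,x)\,f(x,y)\,\xi\eta\,\xi^{-1}f(x,x)^{-1}.
\]
The cross inverse property is therefore equivalent to
\begin{equation}\label{crosseq}
f(xy,x)\,f(x,y)\,\xi\eta\,\xi^{-1}=\eta\,f(x,x)\qquad\text{for all }x,y\in S,\ \xi,\eta\in A.
\end{equation}

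Next I would extract the necessary conditions. Setting $x=y\in S\setminus\{e\}$ in \eqref{crosseq} gives $f(x,x)\,\xi\eta\xi^{-1}=\eta f(x,x)$; specialising $\eta=1$ shows $f(x,x)\in Z(A)$, and then the relation reduces to $\xi\eta\xi^{-1}=\eta$ for all $\xi,\eta\in A$, so $A$ is commutative. Once $A$ is abelian, \eqref{crosseq} collapses to the pure identity on $f$:
\begin{equation}\label{crossredeq}
f(xy,x)\,f(x,y)=f(x,x).
\end{equation}
For $x,y\in S\setminus\{e\}$ with $x\neq y$ (so $xy\neq e$ and, by the Steiner axioms, $xy\neq x$ and $xy\neq y$) I substitute the defining relations $f(xy,x)=h(xy)h(x)$ and $f(x,y)=h(x)h(y)$; using commutativity of $A$, \eqref{crossredeq} becomes precisely identity \eqref{smallequ3} of Theorem~\ref{Prop3}.

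Finally I would verify sufficiency by running through the boundary cases of \eqref{crossredeq} when $A$ is abelian and \eqref{smallequ3} holds: the cases $x=e$, $y=e$, $x=y$ each reduce \eqref{crossredeq} to the trivial equality $f(x,x)=f(x,x)$ (using $f(e,\cdot)=f(\cdot,e)=1$), while the generic case $x\neq y$, $x,y\neq e$ is precisely \eqref{smallequ3} rewritten in an abelian group. Going back through the derivation of \eqref{crosseq}, this establishes the cross inverse property on all of $L$. The only subtle step is the opening case analysis that isolates commutativity of $A$ from the purely cocyclic condition; once that split is made, the remainder is the bookkeeping that matches \eqref{crossredeq} to \eqref{smallequ3}.
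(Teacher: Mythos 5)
Your proof follows the same route as the paper: reduce the cross inverse identity on $S\times A$ to the functional equation $f(xy,x)f(x,y)\xi\eta\xi^{-1}f(x,x)^{-1}=\eta$, extract commutativity of $A$ by a specialization, and then rewrite the remaining condition via $h$ to obtain identity (\ref{smallequ3}). One slip to fix: in your case $x=y$, specialising $\eta=1$ in $f(x,x)\xi\eta\xi^{-1}=\eta f(x,x)$ yields only the tautology $f(x,x)=f(x,x)$; you want $\xi=1$, which gives $f(x,x)\eta=\eta f(x,x)$ and hence $f(x,x)\in Z(A)$ (the paper sidesteps this entirely by putting $x=e$, which gives $\xi\eta\xi^{-1}=\eta$ and commutativity of $A$ in one step). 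With that correction your argument is complete, and your explicit check of the boundary cases in the sufficiency direction is a detail the paper leaves implicit.
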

\begin{proof} The Steiner-like loop $L$ satisfies the cross inverse property if and only for all $(x, \xi )$ and $(y, \eta )$ of $L$ identity (\ref{equk1}) holds, where $(x, \xi )^{\rho }$ is the right inverse of $(x, \xi )$. Since  
$(x, \xi )^{\rho }=(x, \xi ^{-1} f(x,x)^{-1})$  identity (\ref{equk1}) gives 
\[ f(xy,x) f(x,y) \xi \eta \xi^{-1} f(x,x)^{-1}= \eta \] 
for all $x, y \in S$ and $\xi, \eta \in A$. Putting $x=e$ it follows that the group $A$ is commutative. Using the function 
$h$ we obtain $f(x,x)= h(x) h(y) h(x) h(xy)$ which yields the assertion. 
\end{proof}

\noindent
Any Steiner-like loop $L$ satisfying the cross inverse property is commutative, alternative and has the inverse property. 

If $L$ is right alternative or it has the right inverse property and the group $D$ generated by the set 
$\{ h(x), x \in S \setminus \{ e \} \}$ is not abelian, then $L$ cannot have the left inverse property, one has 
$h(x)=u \omega _x$ with $o(\omega _x) \le 2$ and the factor group 
$D \langle u \rangle / \langle u \rangle $ is a restricted Fischer group. If the group $D$ is abelian, then the Steiner-like loop $L$ has neither the left inverse nor the left alternative property if and only if the set 
${\mathcal K}=\{h(x) h(y); \ x, y \in S \setminus \{ e \}, x \neq y \}$ is not contained in the centre of $A$.

\begin{Lemma} \label{Steinergroup} A Steiner loop $S$ is a Bol loop if and only if $S$ is an elementary abelian $2$-group. 
\end{Lemma}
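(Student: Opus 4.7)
The plan is to exploit the fact that Steiner loops are very close to being groups already: they are commutative, of exponent $2$, and satisfy the inverse identity $x(xy)=y$. In a Steiner loop one consequently has the two collapses $(xy)x = (yx)x = y$ and $x(yx) = x(xy) = y$, so the Bol identities become dramatically shorter. This should reduce the Bol hypothesis to full associativity, which together with commutativity and exponent $2$ forces $S$ to be an elementary abelian $2$-group.

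The easy direction is that an elementary abelian $2$-group $S$ is a group, hence a Moufang loop, hence both left and right Bol. For the nontrivial direction, I plan to assume that $S$ satisfies the right Bol identity $z\cdot[(xy)x] = [(zx)y]x$ (the left Bol case is equivalent in a Steiner loop, since by commutativity each side of one identity transforms into the corresponding side of the other). Using $(xy)x=y$, the left-hand side becomes simply $zy$, so the Bol hypothesis reads
\begin{equation*}
zy = [(zx)y]x \qquad \text{for all } x,y,z\in S.
\end{equation*}

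Multiplying both sides on the right by $x$ and using $(wx)x = w$ gives the identity
\begin{equation*}
(zy)x = (zx)y \qquad \text{for all } x,y,z\in S.
\end{equation*}
From here full associativity will follow by a short commutativity shuffle: renaming variables yields $(xy)z = (xz)y$, and comparing this with commutativity of the outer product gives $(xy)z = z(xy) = (zy)x = (zx)y = y(zx) = \dots$; after a couple of such rewrites I will land on $(xy)z = x(yz)$. Once $S$ is associative, it is a commutative associative loop of exponent $2$, which is exactly an elementary abelian $2$-group, completing the proof.

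I do not foresee a real obstacle: the only thing to watch is keeping the substitutions correct in the commutativity shuffle that converts $(zy)x=(zx)y$ into genuine associativity. It might be cleaner to verify associativity by noting that $(zy)x=(zx)y$ is the statement that for each $z$, right multiplication maps $\rho_x,\rho_y$ commute in the sense $\rho_x\rho_y=\rho_y\rho_x$, and then combine with commutativity of $S$ to conclude $\rho_x\rho_y=\rho_{xy}$, i.e.\ all right translations form a commutative group — equivalent to $S$ being associative.
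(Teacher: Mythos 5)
Your proof is correct, but it takes a genuinely different route from the paper. The paper disposes of the lemma in two lines by quoting known results: a commutative (left or right) Bol loop is Moufang (Pflugfelder, p.~121), and a Moufang loop of exponent $2$ is an elementary abelian $2$-group (Chein, Proposition~2). You instead give a direct, self-contained computation: in a Steiner loop $(xy)x=x(xy)=y$ and $(wx)x=w$, so the right Bol identity collapses to $zy=[(zx)y]x$ and, after one application of $\rho_x$, to the exchange law $(zy)x=(zx)y$; from there associativity falls out in one line, e.g.\ $a(bc)=(bc)a=(ba)c=(ab)c$, and a commutative associative loop of exponent $2$ is an elementary abelian $2$-group. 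Your reduction of the left Bol case to the right Bol case by rewriting each side with commutativity is exactly the first of the two cited facts, proved by hand in the special case at issue. What your approach buys is independence from the Moufang machinery and from Chein's classification result; what the paper's approach buys is brevity and the observation that the statement is really a corollary of two standard theorems. The only point to keep tidy in a written version is the ``commutativity shuffle,'' but as the one-line chain above shows, there is no hidden difficulty there.
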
 
\begin{proof} Since a commutative right respectively left Bol loop is Moufang (cf. \cite{flugfelder}, p. 121) and any Moufang loop of exponent $2$ is an elementary abelian $2$-group (cf. Proposition 2 in \cite{chein}, p. 35) the assertion follows. 
\end{proof}

\begin{Theo} \label{groupuj} Let $L$ be a Steiner-like loop such that $S$ has more than $4$ elements.  Then the following conditions are equivalent: 
\newline
(i) $L$ is a group.  
\newline
(ii) $S$ is an elementary abelian $2$-group and for all $x \in S \backslash \{ e \}$ one has $h(x)=t$ and $f(x,x)=t^4$, where $t$ is a fixed element of $A$ such that $t^2 \in Z(A)$. 
\newline 
(iii) $L$ is the direct product of the group $A$ and an  abelian group $C$ with amalgamated  cyclic central subgroup $\Delta $ such that the factor group 
$S \cong C/ \Delta $ is an elementary abelian $2$-group.   
\newline
(iv) $L$ is a left Bol loop. 
\end{Theo}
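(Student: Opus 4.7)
The plan is to prove the cycle $(i) \Rightarrow (iv) \Rightarrow (ii) \Rightarrow (iii) \Rightarrow (i)$. The implications $(i) \Rightarrow (iv)$ and $(iii) \Rightarrow (i)$ are immediate: every group is a left Bol loop, and a direct product of two groups amalgamated over a common central subgroup is itself a group.

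For $(iv) \Rightarrow (ii)$, I first observe that the projection $L \to S$, $(x,\xi) \mapsto x$, is a loop homomorphism by the form of (\ref{equextension}), so $S$ is a left Bol Steiner loop and hence by Lemma \ref{Steinergroup} an elementary abelian $2$-group. Next, setting $y = e$ in (\ref{equk3}) shows that a left Bol loop is left alternative, so by Proposition \ref{rightalternative} the sets $\mathcal F$ and $\mathcal K$ lie in $Z(A)$, identity (\ref{smallequ3}) holds, and Lemma \ref{hxcommutative} gives that $D$ is abelian. Theorem \ref{Prop3} then leaves two possibilities, b)(i) and b)(ii); the constant case is precisely condition (ii) together with $t^2 = h(x)h(y) \in Z(A)$. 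To rule out the case $S = U \times \mathbb Z_2$ with $h$ taking two values $t$ and $t\omega$, I substitute $(x,\xi),(y,\eta),(z,\zeta) \in L$ into the left Bol identity directly. Using centrality of all $f$-values and commutativity of $D$, both sides reduce, for $x,y,z \in S \setminus \{e\}$ pairwise distinct with $xyz \neq e$, to the single equation $h(y)h(xy) = h(yz)h(xz)$. Since $|S| > 4$ forces $|U| \geq 4$, I pick $x, z \in U \setminus \{e\}$ distinct and $y \in S \setminus U$; then $xy, yz \in S \setminus U$ but $xz \in U$, so the left-hand side equals $(t\omega)^2 = t^2$ while the right-hand side equals $(t\omega)t = t^2\omega$, forcing $\omega = 1$, a contradiction.

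For $(ii) \Rightarrow (iii)$, I set $\Delta := \langle t^2 \rangle$, cyclic and by hypothesis central in $A$, and $C := \{(x,\xi) \in L : \xi \in \Delta\}$. All $f$-values lie in $\{1, t^2, t^4\} \subset \Delta$, so $C$ is closed under the loop operation and under taking inverses (with $(x,\xi)^{-1}=(x, t^{-4}\xi^{-1})$); a short case check on the arguments of $f$ verifies the $2$-cocycle identity $f(xy,z)f(x,y) = f(x,yz)f(y,z)$, so $C$ is an associative loop, i.e.\ a group, and it is abelian because $S$ is commutative, $f(x,y)=f(y,x)$ and $\Delta \subset Z(A)$. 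The decomposition $(x,\xi) = (e,\xi)(x,1)$ gives $L = AC$ with $A \cap C = \Delta$, and centrality of $\Delta$ in $A$ implies that $A$ and $C$ commute elementwise; hence $L \cong A \times_\Delta C$ with $C/\Delta \cong S$ an elementary abelian $2$-group, which is (iii). The main obstacle is the Bol calculation in $(iv) \Rightarrow (ii)$: reducing the identity to a clean relation purely on the weights $h$ and then choosing a configuration in the non-constant case of Theorem \ref{Prop3} that distinguishes the two values of $h$ and produces a numerical contradiction; the remaining work is application of previously established lemmas and a direct verification.
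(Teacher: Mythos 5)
Your proof is correct, but the decisive step $(iv)\Rightarrow(ii)$ is handled quite differently from the paper. The paper first notes (as you do) that a left Bol Steiner-like loop is left alternative, hence by Propositions \ref{rightalternative} and \ref{leftalternative} also right alternative; it then invokes the theorem that a left Bol, right alternative loop is Moufang, writes down the right Bol identity, reduces it to $h(xyz)=h(y)h(xy)h(xz)^{-1}$, and sets $z=y$ to get $h$ constant in one stroke. You instead expand the left Bol identity directly and eliminate the non-constant alternative of Theorem \ref{Prop3}~b) by a configuration argument in $S=U\times\mathbb Z_2$. Your route buys self-containedness (no appeal to Pflugfelder's Theorem IV.6.9) at the price of leaning on the full dichotomy of Theorem \ref{Prop3}; the paper's route is shorter and never needs to know what $D$ looks like. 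One computational slip: with all $f$-values central and $D$ abelian, the left Bol identity $[x(yx)]z=x[y(xz)]$ reduces to $f(y,z)f(x,xy)f(y,x)=f(x,xyz)f(y,xz)f(x,z)$, i.e.\ to $h(y)h(xy)=h(xyz)h(xz)$, not to $h(y)h(xy)=h(yz)h(xz)$. This is harmless for your conclusion, since with $x,z\in U$ and $y\notin U$ both $xyz$ and $yz$ lie in $S\setminus U$, so either version yields $t^2=t^2\omega$ and the contradiction $\omega=1$; but the relation as you state it is not the one the identity produces. For $(ii)\Rightarrow(iii)$ your explicit subgroup $C=\{(x,\xi):\xi\in\langle t^2\rangle\}$ with the case check of the cocycle identity is a concrete substitute for the paper's appeal to the \textbf{Construction} of Section~3, and $(iii)\Rightarrow(i)$ and $(i)\Rightarrow(iv)$ are as immediate as you claim.
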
 
\begin{proof} First we assume that $L$ satisfies the associative law. This is the case precisely if $S$ is an elementary abelian $2$-group and 
\begin{equation} \label{equf1} f(xy,z) f(x,y) \xi = f(x,yz) \xi f(y,z) \end{equation}  
holds for all $x,y,z \in S$ and $\xi \in A$. Putting $x=e$ into identity (\ref{equf1}) one has $f(y,z) \in Z(A)$ for all $y, z \in S$. 
Hence (\ref{equf1}) reduces to \begin{equation} \label{equf2} f(x,y) f(xy,z)=f(y,z) f(x,yz). \nonumber \end{equation}  
Since the range of $h$ is commutative (cf. Lemma \ref{hxcommutative}) this identity  yields that $h(x)=t$ for all 
$x \in S \setminus \{ e \}$. For $x=y$ we get $f(x,x)=h(x) h(z) h(x) h(xz) =t^4$ for all $x \in S \setminus \{ e \}$ and 
$h(x) h(y)=t^2 \in Z(A)$.  Hence property (i) yields property (ii). 
Using the {\bf Construction} in Section 3 from property (ii) we obtain the assertion (iii). Now we assume that (iii) holds. The multiplication of $L$ on the set 
$S \times B \times \Delta $  given by (\ref{equ2}) is associative if and only if 
\begin{equation} \label{construction} k(\rho _1 \rho _2, \rho _3) k( \rho _1, \rho _2) = k(\rho _1, \rho _2 \rho _3) k( \rho _2, \rho _3)  \nonumber  \end{equation}  
holds for all $\rho _1, \rho _2, \rho _3 \in B$, where the map $k: B \times B \to \Delta $ is a factor system for the group $B$. From identity (1) of Satz 14.1 in \cite{huppert} (p. 86) we obtain that this identity  is true and (i) follows.  
\newline
Clearly from (i) it follows (iv).  It remains to prove that if $L$ is a left Bol loop, then $L$ is a group. 
If $L$ is a left Bol loop, then $S$ is an elementary abelian $2$-group (see Lemma \ref{Steinergroup}). Moreover, $L$ is left  alternative, i.e. the sets 
$\{ f(x,x);  \ x \in S \}$, $\{h(x) h(y); \ x, y \in S \setminus \{ e \}, x \neq y \}$ are contained in the centre of $A$ and identity (\ref{smallequ3}) of Theorem \ref{Prop3} holds (see Proposition \ref{rightalternative}). Hence every left Bol loop is right alternative 
(cf.  Proposition \ref{leftalternative}) and therefore it is a Moufang loop (cf. Theorem IV.6.9 in \cite{flugfelder}, 
p. 116). Then $L$ satisfies the right Bol identity, i.e. for all $(x, \alpha )$, $(y, \beta )$, $(z, \gamma )$ of $L$ identity (\ref{equk4}) holds,    
which reduces to  
\begin{equation} \label{Moufangequ2} h(xyz)= h(y) h(xy) h(xz)^{-1}. \end{equation} 
Putting $z=y$ into (\ref{Moufangequ2}) we obtain $h(x)=h(y)$ and hence $h(x)=t \in A$ for all 
$x \in S \setminus \{ e \}$ and $f(x,x)=t^4$, where $t$ is a fixed element of $A$ such that $t^2 \in Z(A)$ and the assertion (ii) follows. This completes the proof of the theorem. 
\end{proof}

\begin{Theo} \label{smallgroup} Let $L$ be a Steiner-like loop  such that $S=\{ e, x, y, xy \}$ is the elementary abelian group of order $4$.  Then the following conditions are equivalent: 
\newline
(i) $L$ is a group. 
\newline
(ii) $L$ is left alternative or it has  the left inverse property.  
\newline
(iii) $L$ is a left Bol loop. 
\newline 
Moreover, $L$ is a commutative group if and only if $L$ has the cross inverse property. 
\end{Theo}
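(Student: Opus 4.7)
The plan is to prove the cycle (i) $\Rightarrow$ (iii) $\Rightarrow$ (ii) $\Rightarrow$ (i), and then establish the commutative-group addendum separately. The implication (i) $\Rightarrow$ (iii) is immediate, since any group satisfies the left Bol identity. For (iii) $\Rightarrow$ (ii), setting $y = e$ in the left Bol identity (\ref{equk3}) yields $(x \cdot x) \cdot z = x \cdot (x \cdot z)$, which is the left alternative identity (\ref{equc2}). Hence the substantive work is contained in the implication (ii) $\Rightarrow$ (i).

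For (ii) $\Rightarrow$ (i), I invoke Proposition \ref{rightalternative} to rewrite the hypothesis as: the sets $\{f(z,z)\}$ and $\{h(x) h(y) : x \neq y \in S \setminus \{e\}\}$ lie in $Z(A)$ and identity (\ref{smallequ3}) holds. Since $|S| = 4 > 2$, Lemma \ref{hxcommutative} forces the group $D = \langle h(z)\rangle$ to be abelian, so by Proposition \ref{Propuj3} the values are the commuting elements $h(x) = a$, $h(y) = b$, $h(xy) = c$ and $f(x,x) = al$, $f(y,y) = bl$, $f(xy,xy) = cl$ with $l = abc$. These together with $f(x,y) = ab$, $f(x,xy) = ac$, $f(y,xy) = bc$ all lie in $Z(A)$. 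Because $S = \mathbb{Z}_2 \times \mathbb{Z}_2$ is a group and $f$ is central-valued, the observation in Section 3 that when $S$ is a group the loop $L$ is a group precisely if $f$ is a factor system reduces the whole claim to verifying the cocycle condition
\[
f(pq, r)\,f(p, q) \;=\; f(p, qr)\,f(q, r) \qquad (p, q, r \in S).
\]
Triples containing $e$ are trivial, and the remaining triples from $\{x, y, xy\}^3$ reduce, using the symmetry of $f$ and the commutativity of $a, b, c$, to matching symmetric monomials (all equal in each case to the appropriate element among $a^2 bc$, $a b^2 c$, $abc^2$, $a^2 b^2 c^2$). This is a short finite check.

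For the addendum, if $L$ is a commutative group, then $(xy) \cdot x^{-1} = y$ is obvious, which is the cross inverse property. Conversely, Proposition \ref{crossinverse} tells us that the cross inverse property forces $A$ to be commutative and identity (\ref{smallequ3}) to hold. With $A$ abelian every value of $f$ is automatically in $Z(A)$, so condition c) of Proposition \ref{rightalternative} is met and $L$ is left alternative; by the equivalence just proved $L$ is a group. Since $A$ and $S$ are both commutative, $L$ is a commutative group.

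The only technical step is the cocycle verification in (ii) $\Rightarrow$ (i); but as all values lie in the abelian central subgroup $\langle a, b, c \rangle \subset Z(A)$, this is essentially symbolic bookkeeping over a handful of triples and presents no real obstacle.
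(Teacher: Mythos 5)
Your proposal is correct and follows essentially the same route as the paper: both reduce everything to the implication (ii) $\Rightarrow$ (i) via the standard fact that left Bol implies left alternative, invoke Proposition \ref{rightalternative} to get centrality of the $f$-values together with identity (\ref{smallequ3}), and then observe that for $|S|=4$ the associativity condition (\ref{equf1}) reduces to a cocycle identity that holds by direct inspection; the addendum is likewise handled through Proposition \ref{crossinverse} in both treatments. The only difference is that you spell out the finite cocycle check (via Lemma \ref{hxcommutative} and Proposition \ref{Propuj3}) where the paper simply asserts that (\ref{equf1}) "trivially holds".
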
 
\begin{proof} Since every left Bol loop is left alternative we have only to prove that (ii) implies (i). 
A Steiner-like loop $L$ is left alternative if and only if the sets $\{ f(x,x); \ x \in S \}$, $\{ h(x) h(y); x \neq y \in S \setminus \{ e \} \}$ are contained  in the centre of $A$ and identity (\ref{smallequ3}) 
of Theorem \ref{Prop3} holds (cf. Proposition  \ref{rightalternative}).
Using this if $S$ has only $4$ elements, then identity (\ref{equf1}) trivially holds.  
 
According to Proposition \ref{crossinverse} the loop $L$ has the cross inverse property precisely if $L$ is left alternative and the group $A$ is commutative. In this case $L$ is abelian (see Section 3, p. 5) and the last assertion follows. \end{proof}

\begin{Prop} \label{smallleftbol} Let $L$ be a proper Steiner-like loop.  
\newline
\noindent
a) If $S=\{ e, x, y, xy \}$ is the elementary abelian group of order $4$, then  
the loop $L$ satisfies the right Bol identity if and only if the range of $h$ is commutative, the set $\{ f(x,x);\ x \in S \}$ is contained in the centre $Z(A)$ of $A$ and identity (\ref{smallequ3}) of Theorem \ref{Prop3} holds.  
\newline
\noindent
b) If the Steiner loop $S$ has more than $4$ elements, then the loop $L$ is a right Bol loop precisely if $S$ is an elementary abelian $2$-group and for all $x \in S \backslash \{ e \}$ one has $h(x)=t$ and $f(x,x)=t^4$ with a fixed element $t$ of $A$, but $t^2 \notin Z(A)$. 
 \end{Prop}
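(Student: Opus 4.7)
The plan is to reduce the right Bol identity (\ref{equk4}) for $L$ to two conditions on the factor system $f$. A direct computation with the loop multiplication (\ref{equextension}), using the Steiner loop identity $xy\cdot x=y$, shows that the first components of both sides of (\ref{equk4}) agree precisely when $(zx)y\cdot x=zy$ in $S$, i.e.\ when $S$ is an elementary abelian $2$-group. The second components then reduce to
\[ f(z,y)\,\zeta\,f(xy,x)f(x,y) \;=\; f(zxy,x)f(zx,y)f(z,x)\,\zeta \]
for every $x,y,z\in S$ and $\zeta\in A$. Setting $\zeta=1$ isolates the cocycle-type identity (C2): $f(z,y)f(xy,x)f(x,y)=f(zxy,x)f(zx,y)f(z,x)$, and feeding this back forces the centrality condition (C1): $f(xy,x)f(x,y)\in Z(A)$ for all $x,y\in S$; conversely (C1) and (C2) together imply the right Bol identity. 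Moreover, specialising $y=e$ in (\ref{equk4}) shows that every right Bol loop is right alternative, so Proposition \ref{leftalternative} gives $\{f(x,x)\}\subset Z(A)$ and identity (\ref{smallequ3}).

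For part~(a), with $|S|=4$, the $(\Leftarrow)$ direction is a routine verification: abelianness of $D$ together with (\ref{smallequ3}) rewrites $f(xy,x)f(x,y)=h(xy)h(x)^2h(y)=h(x)^2h(y)h(xy)=f(x,x)\in Z(A)$, so (C1) holds, and since $S\setminus\{e\}$ has only three elements (C2) collapses to products of $h$-values and $f(x,x)$-values easily checked case by case. The $(\Rightarrow)$ direction is the main obstacle: one must derive that $D$ is abelian. From (C1) combined with the substitution $h(xy)=h(x)^{-1}h(y)^{-1}h(x)^{-1}f(x,x)$ coming from (\ref{smallequ3}), one obtains $f(xy,x)f(x,y)=[h(x),h(y)]\cdot f(x,x)$, so $[h(x),h(y)]\in Z(A)$ for every pair, i.e.\ $D$ is nilpotent of class at most two. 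Writing $\alpha=h(x)$, $\beta=h(y)$, $\gamma=h(xy)$ with commutators $p=[\alpha,\beta]$, $q=[\alpha,\gamma]$, $r=[\beta,\gamma]$, and using the class-two normal form $\alpha^i\beta^j\gamma^k\cdot(\text{central})$, identity (\ref{smallequ3}) yields the single relation $pqr=1$. Two suitable instances of (C2) --- for example $(a,b,c)=(y,x,xy)$ and $(a,b,c)=(y,xy,x)$ --- produce, after reduction to the normal form, the relations $r=q^{-2}$ and $r=q^{-1}$; together with $pqr=1$ these force $q=1$, and hence $p=r=1$, so $D$ is abelian.

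For part~(b), with $|S|>4$, the right Bol property descends through the quotient $L\to S$, so by Lemma~\ref{Steinergroup} $S$ is elementary abelian of order at least $8$. Theorem~\ref{Prop3} then places $h$ in one of its three alternatives. Case~(iii) is eliminated by (C1): writing $h(x)=u\omega_x$ one computes $f(xy,x)f(x,y)=u^4\omega_{xy}\omega_y$, whence centrality forces $\omega_{xy}\omega_y\in Z(A)$ for every pair $xy\neq y$ in $S\setminus\{e\}$. Since any two distinct non-identity points $a,b\in S$ arise as $(xy,y)$ with $x=ab\neq e$, this gives $\omega_a\omega_b\in Z(A)$ for all $a\neq b$, which --- since each $\omega$ squares to $1$ --- yields $[\omega_a,\omega_b]=1$, contradicting the assumed non-abelianness of $D$. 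Case~(ii) is eliminated by picking $x\in U\setminus\{e\}$ and distinct $y,z\in S\setminus U$ (possible since $|U|\geq 4$) and substituting into (C2): the LHS simplifies to $t^6$ while the RHS equals $t^6\omega$, contradicting $\omega\neq 1$. Only case~(i) survives, giving $h\equiv t$, $f(x,x)=t^4$, and $t^4\in Z(A)$; conversely, in this situation both (C1) and (C2) reduce to identities in the cyclic subgroup $\langle t\rangle$ that are verified by tracking exponents across the finite case analysis, so $L$ is indeed right Bol. Finally, Theorem~\ref{groupuj} characterises the associative case exactly as $t^2\in Z(A)$, so properness of $L$ corresponds precisely to $t^2\notin Z(A)$.
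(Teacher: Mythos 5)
Your reduction of the right Bol identity to the pair (C1), (C2) is exactly equivalent to the paper's single identity (\ref{leftbolequ1}) (setting $\alpha=1$ there yields your (C2), and the residual dependence on $\alpha$ yields (C1)), and your observation that $y=e$ gives right alternativity, hence $\{f(x,x)\}\subset Z(A)$ and identity (\ref{smallequ3}), is also the paper's first step. After that the two arguments diverge substantially. For part a) the paper never touches commutator normal forms: it puts $k=e$ and $\alpha=h(xy)$ into (\ref{leftbolequ1}) --- in your language, (C1) says $f(xy,x)f(x,y)=h(xy)h(x)^2h(y)$ commutes with $h(xy)$ --- and then applies Lemma \ref{hxinvolution}~a) ($h(x)^2\in Z(D)$) to conclude $h(y)h(xy)=h(xy)h(y)$ in two lines. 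Your class-two computation with $p,q,r$ aims at the same endpoint but the two relations you extract from (C2), namely $r=q^{-2}$ and $r=q^{-1}$, are asserted without derivation; working out natural instances of (C2) I obtain relations of a different shape (for example $p=q^2r^3$ and $p=q^2r^2$), which together with the relations coming from (\ref{smallequ3}) do force $p=q=r=1$, so your plan can be completed, but as written this key step cannot be checked. For part b) the paper again stays inside (\ref{leftbolequ1}): interchanging $x$ and $k$ and comparing the two identities gives $h(xy)=h(ky)$, hence $h$ constant, with no case analysis. Your route through the trichotomy of Theorem \ref{Prop3} also works (the elimination of the non-abelian case via (C1) and of case (ii) via a (C2) instance are sound, provided you additionally choose $z\neq xy$ so that $zxy\neq e$ and the $h$-expansion of $f(zxy,x)$ is legitimate), and it has the merit of making visible \emph{why} the restricted Fischer group alternative is incompatible with the right Bol identity; but it pays for this with a much heavier input.

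There is one genuine gap in part b): the non-abelian alternative of Theorem \ref{Prop3} is stated only for \emph{finite} Steiner loops $S$, whereas Proposition \ref{smallleftbol} carries no finiteness hypothesis. Your elimination of the Fischer-group case is therefore not licensed by the theorem you cite when $S$ is infinite; the paper's direct derivation of $h(xy)=h(ky)$ avoids the issue entirely. The portion of the proof of Theorem \ref{Prop3} you actually need (the normal form $h(x)=u\omega_x$ with $o(\omega_x)\le 2$) does not use finiteness, so the gap is patchable, but it must be patched or the argument restricted to finite $S$.
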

\begin{proof}  The loop $L$ is a right Bol loop if and only if the Steiner loop $S \cong L/A$ is an elementary abelian $2$-group (cf. Lemma \ref{Steinergroup}) and for all $x,y,k \in S$ and $\alpha  \in A$ identity 
\begin{equation} \label{leftbolequ1} f(k,y) \alpha f(xy,x) f(x,y)= f(xyk,x) f(xk,y) f(k,x) \alpha \end{equation} 
holds. 
Since every right Bol loop is right alternative, the set $\{ f(x,x); \ x \in S \}$ is contained  in the centre of $A$ and identity (\ref{smallequ3}) 
of Theorem \ref{Prop3} holds (cf. Proposition  \ref{leftalternative}). Using the function $h$, putting  $k=e$ and 
$\alpha =h(xy)$ into (\ref{leftbolequ1}) we have  
\begin{equation} \label{leftbolequ3} 
h(x)^2 h(y) h(xy) =  h(xy) h(x)^2 h(y)  \end{equation} 
for all $x \neq y \in S \setminus \{ e \}$. According to Lemma \ref{hxinvolution} a) we have 
$h(xy) h(x)^2= h(x)^2 h(xy)$ for all $x,y \in S \setminus \{ e \}$, $x \neq y$. Hence identity (\ref{leftbolequ3}) reduces to 
\begin{equation} \label{leftbolequ4} 
h(xy) h(y) =  h(y) h(xy) \nonumber \end{equation} 
for all $x \neq y \in S \setminus \{ e \}$ which gives that the range of $h$ is commutative. Therefore if $|S|=4$ we have assertion a). 
\newline
\noindent
Now we assume that $|S| > 4$. 
Interchanging $x$ and $k$ into identity (\ref{leftbolequ1}) we get 
\begin{equation} \label{leftbolequ12} f(x,y) \alpha f(ky,k) f(k,y) = f(xyk, k) f(xk,y) f(x,k) \alpha \end{equation} 
for all $x,y,k \in S$ and $\alpha  \in A$.   
As the range of $h$ is commutative identity (\ref{leftbolequ1}) reduces to $h(y) h(xy)= h(xk) h(xyk)$ and identity (\ref{leftbolequ12}) reduces to 
$h(y) h(ky)= h(xk) h(xyk)$ for all $x, y, k \in S \setminus \{ e \}$, $x \neq y$, $x \neq k$, 
$y \neq k$. Comparing the last two identities we get $h(xy)= h(ky)$ or equivalently $h(x)=t$ and $f(x,x)=t^4$ with a fixed element 
$t \in A$ for all $x \in S \setminus \{ e \}$. 

Conversely, if $h(x)=t$ and $f(x,x)=t^4$, but $t^2 \notin Z(A)$, then identity (\ref{leftbolequ1}) holds
for all $x,y,k \in S$ and $\alpha  \in A$. Hence $L$ satisfies the right Bol identity. \end{proof}

\noindent
A Steiner-like right Bol loop $L$ does not need to be left alternative as the following examples show. 
Let $A$ be a finite simple group which is not isomorphic to one of the following groups: $PSL(2, 2^n)$, $n>1$, $PSL(2,q)$, $q \equiv 3$ or $5$ \  $(\hbox{mod} \ 8)$, $q > 3$, the Janko group of order 175560, a group of Ree type of order $q^3 (q-1) (q^3+1)$, where $q=3^{2k+1}$, $k \ge 1$. Then $A$ contains an element $u$ of order $4$ (cf. \cite{walter}, Theorem I and \cite{huppert}, Kapitel II, Satz 8.2 and 8.10). Putting $h(x) = u$ for all  
$ x \in S \setminus \{ e \} $ we obtain a right Bol loop $L$ which is not left alternative and has exponent $4$ since $f(x,x)=1$.

\noindent
\begin{Prop} \label{automorphicinverse} A Steiner-like loop $L$ has the automorphic inverse property if and only if the group $A$ is commutative and identity (\ref{equ13}) of Proposition \ref{Prop4} holds.  
\end{Prop}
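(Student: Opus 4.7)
The plan is to translate the automorphic inverse identity $((x,\xi)(y,\eta))^{-1}=(x,\xi)^{-1}(y,\eta)^{-1}$ directly into a functional equation on $f$ and $A$, and then read off both directions in the style of Propositions \ref{crossinverse}--\ref{leftalternative}.

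First I would note that in order for $(x,\xi)$ to have a two-sided inverse at all, the set $\{f(x,x);\ x\in S\}$ must lie in $Z(A)$ (as was already used in Proposition \ref{leftalternative}); under this condition one has $(x,\xi)^{-1}=(x,f(x,x)^{-1}\xi^{-1})$. Using the multiplication rule (\ref{equextension}) I would then compute
\[
((x,\xi)(y,\eta))^{-1}=\bigl(xy,\,f(xy,xy)^{-1}\eta^{-1}\xi^{-1}f(x,y)^{-1}\bigr)
\]
and
\[
(x,\xi)^{-1}(y,\eta)^{-1}=\bigl(xy,\,f(x,y)f(x,x)^{-1}\xi^{-1}f(y,y)^{-1}\eta^{-1}\bigr),
\]
so identity (\ref{equk2}) is equivalent to the master equation
\[
f(xy,xy)^{-1}\eta^{-1}\xi^{-1}f(x,y)^{-1}=f(x,y)f(x,x)^{-1}\xi^{-1}f(y,y)^{-1}\eta^{-1}
\]
holding for all $x,y\in S$ and all $\xi,\eta\in A$.

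Second, specializing to $x=e$ (so that $f(e,y)=f(e,e)=1$) and using that $f(y,y)$ is central, the master equation reduces to $\eta^{-1}\xi^{-1}=\xi^{-1}\eta^{-1}$ for all $\xi,\eta\in A$, which forces $A$ to be commutative. Third, under commutativity of $A$ the master equation collapses to
\[
f(x,x)f(y,y)f(xy,xy)^{-1}=f(x,y)^{2},
\]
and for $x\neq y$ in $S\setminus\{e\}$ the substitution $f(x,y)=h(x)h(y)$ gives exactly identity (\ref{equ13}) of Proposition \ref{Prop4}; the boundary cases $x=e$, $y=e$, and $x=y$ are trivial since both sides reduce to $1$ or to $f(x,x)^{2}$. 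The converse direction is verified by running the same computation backwards: commutativity of $A$ together with identity (\ref{equ13}) makes both sides of the master equation coincide.

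The computation itself is straightforward; the only step requiring slight care is the extraction of commutativity of $A$, since one must make sure no spurious $f$-terms remain when specializing $x=e$. Once commutativity is secured the rest is a direct reshuffling of factors, entirely analogous to the proof of Proposition \ref{crossinverse}.
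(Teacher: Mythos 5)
Your proposal is correct and arrives at exactly the functional equation the paper uses: $f(x,x)f(y,y)f(xy,xy)^{-1}=f(x,y)^2$ together with commutativity of $A$, followed by the substitution $f(x,y)=h(x)h(y)$ to get identity (\ref{equ13}). The only difference is that the paper obtains this equivalence by citing Proposition 3.5 of \cite{nagy} (using $x^{-1}=x$ in $S$), whereas you rederive it directly from the multiplication rule (\ref{equextension}); your in-house computation, including the extraction of commutativity by specializing $x=e$, is a valid self-contained replacement for that citation.
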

\begin{proof} As $x^{-1}=x$ for all $x \in S$ Proposition 3.5 in \cite{nagy}, p. 764, yields that the Steiner-like loop $L$ satisfies identity (\ref{equk2}) if and only if the group $A$ is commutative and identity  
\begin{equation} \label{equ33} f(y,y) f(x,x) f(x y, x y)^{-1} =  f(x,y)^2 \nonumber 
\end{equation} 
holds. Using the function $h$ we obtain identity (\ref{equ13}) of Proposition \ref{Prop4} and the assertion follows. 
\end{proof}

\noindent
Any Steiner-like loop $L$ satisfying  the automorphic inverse property is commutative. 
Every Steiner-like loop $L$ having the cross inverse property satisfies the automorphic inverse property. There are loops $L$ having the automorphic inverse property but not the cross inverse property. This is the case if one chooses in identities  (\ref{smallautomorphicequ1}) of Proposition \ref{Prop4} for an element 
$x \in S \setminus \{ e \}$ the element $\rho_x \neq 1$.

\noindent 
\begin{Prop} \label{weekinverse} A Steiner-like loop $L$ has the weak inverse property if and only if the set 
$\{ f(x,y); \ x, y \in S \}$ is contained in the centre of $A$ and for all $x \in S \setminus \{ e \}$ one has $f(x,x)=s h(x)=h(x) s$, where $s$ 
is a fixed element of $A$. 
\end{Prop}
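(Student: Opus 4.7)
The plan is to translate the weak inverse identity using the extension formula $(x,\xi)(y,\eta)=(xy,f(x,y)\xi\eta)$ and to extract the constraints on $f$ from the freedom in the $A$-parameters. For $a=(x,\xi)$, $b=(y,\eta)$, $c=(z,\zeta)$, first observe that the implication $(xy)z=e\Rightarrow x(yz)=e$ is automatic in the Steiner loop $S$: the hypothesis forces $z=xy$ (since $u^{-1}=u$), whence $yz=y(yx)=x$ by the Steiner identity $y(yu)=u$, so $x(yz)=x^{2}=e$. Consequently the weak inverse property in $L$ reduces to a single implication in $A$, namely, for all $x\neq y\in S\setminus\{e\}$ and all $\xi,\eta,\zeta\in A$,
\[ f(xy,xy)\,f(x,y)\,\xi\eta\zeta=1 \ \Longrightarrow\ f(x,x)\,\xi\,f(y,xy)\,\eta\,\zeta=1. \]
Eliminating $\zeta$ from the first equation and substituting into the second yields the single requirement
\[ f(x,x)\,\xi\,f(y,xy)\,\xi^{-1}=f(xy,xy)\,f(x,y)\qquad\text{for every }\xi\in A. \]

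Setting $\xi=1$ gives the consistency relation $f(x,x)f(y,xy)=f(xy,xy)f(x,y)$, while comparing this with the version for arbitrary $\xi$ forces $f(y,xy)=h(y)h(xy)\in Z(A)$. Given distinct nonzero $y,v\in S$, the choice $x=vy$ yields $xy=v$, so $h(y)h(v)\in Z(A)$ for every pair of distinct nonzero points, and Lemma~\ref{hxcommutative} then gives that the range of $h$ is commutative. A separate inspection of the degenerate case $x=e$ of the WIP implication (where $z$ is forced to equal $y$) delivers, via the same substitution trick applied to $\xi f(y,y)\xi^{-1}=f(y,y)$, that $f(y,y)\in Z(A)$ for every $y\in S$, so the full centrality $\{f(x,y):x,y\in S\}\subset Z(A)$ is established.

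With every value of $f$ central, the consistency relation lives in the abelian subgroup $Z(A)$ and simplifies, after cancelling $h(y)$, to $f(x,x)h(x)^{-1}=f(xy,xy)h(xy)^{-1}$. Since every pair of distinct points of $S\setminus\{e\}$ lies on a common block of the associated Steiner triple system, the left-hand side is independent of $x$; denote the common value by $s\in A$, obtaining $f(x,x)=sh(x)$ for every $x\in S\setminus\{e\}$. The equality $sh(x)=h(x)s$ then follows from $sh(x)=f(x,x)\in Z(A)$, which upon commuting with $h(x)$ yields exactly $[s,h(x)]=1$.

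For the converse I would simply reverse the argument: under the two stated conditions the centrality of every $f(x,y)$ collapses the $A$-component of $(ab)c=e_L$ to a single equation, the Steiner implication from the first paragraph handles the $S$-part of $a(bc)=e_L$ for free, and the residual relation $f(x,x)f(y,xy)=f(xy,xy)f(x,y)$ is a direct consequence of $f(x,x)=sh(x)$ together with the commutativity of the range of $h$. The step I expect to require the most care is the propagation of centrality from the generic triples to the diagonal values $f(y,y)$, since Lemma~\ref{hxcommutative} alone handles only the off-diagonal products $h(u)h(v)$; the degenerate case $x=e$ of the WIP implication is precisely what closes this gap.
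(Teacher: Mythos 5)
Your proof is correct, and it follows the paper's overall skeleton (reduce the weak inverse implication to the relation $f(x,x)\,\xi\, f(y,xy)\,\xi^{-1}=f(xy,xy)f(x,y)$ by eliminating the free parameter, extract the consistency relation at $\xi=1$, and conclude that $f(x,x)h(x)^{-1}$ is constant because any two points of $S\setminus\{e\}$ lie on a block), but it differs in one genuinely substantive step. For the centrality of $\{f(x,y);\ x,y\in S\}$ the paper does not compute at all: it invokes Osborn's theorem that in a loop with the weak inverse property the left, middle and right nuclei coincide, and combines this with the fact that $A$ always lies in the middle and right nucleus of $L$ and lies in the left nucleus precisely when all $f(x,y)$ are central. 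You instead derive centrality directly from the functional equation: comparing the relation for arbitrary $\xi$ with its value at $\xi=1$ gives $f(y,xy)\in Z(A)$, the substitution $x=vy$ shows this covers every off-diagonal value $h(y)h(v)$ (so Lemma~\ref{hxcommutative} applies and the range of $h$ commutes), and the degenerate case $x=e$ supplies the diagonal values $f(y,y)\in Z(A)$. Your route is more elementary and self-contained --- it replaces an external structure theorem by a two-line computation and, in particular, makes explicit why the diagonal values are central, which the citation handles wholesale; the paper's route is shorter and places the result in the context of the general nucleus theory of WIP loops. The remaining steps (cancelling $h(y)$ using commutativity of the range of $h$, reading off $s$, and reversing the argument for the converse, where the only point to check is $h(x)h(y)h(xy)=h(xy)h(x)h(y)$) coincide with the paper's identities (\ref{equfirstweak})--(\ref{equsecondweak}) and the computation that follows them.
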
 
\begin{proof} First we assume that identity (\ref{equweek}) holds, which means that 
\begin{equation} \label{weekequuj1} (x, \xi ) [(y, \eta ) (z, \alpha )]= (x (yz), f(x,yz) \xi f(y,z) \eta \alpha )=(e,1)  \end{equation}
is satisfied precisely if 
\begin{equation} \label{weekequuj2} [(x, \xi ) (y, \eta )] (z, \alpha )= ( (x y)z, f(xy,z) f(x,y) \xi \eta \alpha )=(e,1) \end{equation}
holds.   
Clearly $x(yz)=e$ if and only if $(xy) z=e$. Hence we can write $z=xy$. Therefore identity 
\begin{equation} \label{equfirstweak} f(x,x) \xi f(y,xy) \eta \alpha =1 \end{equation} 
is satisfied whenever identity  
\begin{equation} \label{equsecondweak} f(xy,xy) f(x,y) \xi \eta \alpha = 1 \end{equation}
holds. The set $\{ f(x,y); \ x,y \in S \}$ is contained in the centre of $A$ since for loops having the weak inverse property the left, the middle and the right nucleus coincide (cf. Theorem 1 in \cite{osborn}, p. 297). If identities (\ref{equfirstweak}) and (\ref{equsecondweak}) are equivalent,  then the identity 
\begin{equation} \label{equthird} f(x,y) f(xy,xy)= f(y,xy) f(x,x) \nonumber \end{equation}
holds for all $x, y \in S$. 
Using the function $h$ this relation yields  
\begin{equation} \label{equfour} f(xy,xy) h(xy)^{-1}= f(x,x) h(x)^{-1} \nonumber \end{equation} 
for all $x, xy \in S \setminus \{ e \}$. Hence for all $x \in S \setminus \{ e \}$ one has 
$f(x,x)=s h(x)=h(x) s$ 
with a fixed element $s$ of $A$. 

Conversely, now we assume that for all $x,y \in S$ the set $\{ f(x,y); \ x, y \in S \}$ is contained in the centre 
of $A$ and for all $x \in S \setminus \{ e \}$ one has $f(x,x)=s h(x)$. We have to prove the equivalence of identities (\ref{weekequuj1}) and 
(\ref{weekequuj2}). For Steiner loops the property $x(yz)=e$ if and only if $(xy) z=e$ is always true. Hence it remains to prove the equivalence of  identities (\ref{equfirstweak}) and (\ref{equsecondweak}). This holds since  
\begin{equation} 1=f(x,x) \xi f(y,xy) \eta \alpha = f(x,x) f(y,xy) \xi \eta \alpha = \nonumber \end{equation}
\begin{equation} s h(x) h(y) h(xy) \xi \eta \alpha = s h(xy) h(x) h(y) \xi \eta \alpha =f(xy,xy) f(x,y) \xi \eta \alpha = 1. \nonumber \end{equation}  
\end{proof}

\noindent
Every Steiner-like loop $L$ which is alternative or which has the inverse property has the weak inverse property. 
There are Steiner-like loops having the weak inverse property but which are neither left alternative nor have the left inverse property. This is the case if $s \neq h(x) h(y) h(xy)$. 

\begin{Coro} Let $L$ be a Steiner-like loop  such that $S=\{ e, x, y, xy \}$ is the elementary abelian group of order $4$. 
\newline
\noindent
a) If $L$ is right alternative or it has the right inverse property or it has the weak inverse property, then $L$ is a group if and only if $L$ is alternative or it has the inverse property. 
\newline
\noindent
b) If $L$ has the automorphic inverse property, then $L$ is an abelian group if and only if $L$ has the cross inverse property. 
\end{Coro}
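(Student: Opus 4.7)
The plan is to derive both assertions directly from Theorem \ref{smallgroup}, which characterizes, for a Steiner-like loop $L$ with $|S|=4$, when $L$ is a group (respectively a commutative group).

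For part (a), the forward implication is immediate: every group is alternative and has the inverse property, hence in particular satisfies each of the weaker one-sided hypotheses (right alternativity, the right inverse property, or the weak inverse property). For the converse I would simply observe that ``$L$ is alternative'' means $L$ is both left and right alternative, while ``$L$ has the inverse property'' means $L$ has both left and right inverse property. Hence, if $L$ is alternative, then in particular $L$ is left alternative; and if $L$ has the inverse property, then in particular $L$ has the left inverse property. By Theorem \ref{smallgroup}, either of these one-sided conditions already forces $L$ to be a group. The stated hypothesis of right alternativity / right inverse / weak inverse is automatically satisfied by any group and therefore serves only to delineate the class under consideration; it imposes no additional constraint within the proof itself.

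For part (b), I would invoke the final sentence of Theorem \ref{smallgroup}, which says that $L$ is a commutative group if and only if $L$ has the cross inverse property. One direction is now trivial: if $L$ is an abelian group, then $L$ is in particular a commutative group, so that theorem yields the cross inverse property. For the other direction, if $L$ has the cross inverse property, then by the same theorem $L$ is a commutative, hence abelian, group. Compatibility with the standing hypothesis of the automorphic inverse property is automatic, since every Steiner-like loop with the cross inverse property also satisfies the automorphic inverse property (as noted immediately after Proposition \ref{automorphicinverse}) and any abelian group trivially has the automorphic inverse property.

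The main and essentially only ``obstacle'' here is bookkeeping: one must verify in each case that a single one-sided condition of Theorem \ref{smallgroup} (left alternativity, the left inverse property, or the cross inverse property) is genuinely implied by the corresponding two-sided strengthening appearing in the statement. No further computation is required, and the whole corollary is a direct repackaging of Theorem \ref{smallgroup}.
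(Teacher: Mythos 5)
Your proof is correct and follows essentially the same route as the paper's: both reduce the corollary to Theorem \ref{smallgroup}, using that ``alternative'' (resp.\ the inverse property) implies left alternativity (resp.\ the left inverse property), and invoking the final sentence of that theorem for the cross inverse case. Your observation that the standing one-sided hypotheses impose no additional constraint in the argument is accurate and consistent with the paper's own terse proof, which likewise just cites Theorem \ref{smallgroup} together with the propositions characterizing the various properties.
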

\begin{proof}  A loop $L$ is a group precisely if $L$ is left alternative (see Theorem \ref{smallgroup}). Then the assertion follows from Propositions \ref{leftalternative}, \ref{rightalternative}, \ref{crossinverse}, \ref{automorphicinverse}, \ref{weekinverse}. \end{proof}

\section{Variations of extension process (\ref{equextension})} 
A loop extension of type (\ref{equextension}) with respect to the factor system $f$ may be also defined 
in such a way that the multiplication on the set $S \times A$ is given by
\[(x, \xi )(y, \eta ) = (xy, \xi f(x, y) \eta), \] respectively 
\[(x, \xi )(y, \eta ) = (xy, \xi \eta f(x, y)). \]
This yields loops $L^*$, respectively  $L^{**}$ which coincide with the extension given by multiplication (\ref{equextension})   
if and only if the set $\{f(x,y); \ x,y \in S \}$ is contained in the centre of $A$. Therefore, we have to compare loops $L$,  $L^*$, $L^{**}$ for which the set $\{f(x,y); \ x,y \in S \}$ is not contained in the centre of $A$.

The left inverse of the element $(x, \xi)$ in $L^*$, respectively in $L^{**}$ has the form 
$(x^{-1}, \xi ^{-1} f(x^{-1},x)^{-1} )$, respectively 
$(x^{-1},  f(x^{-1},x)^{-1}\xi ^{-1} )$.
The right inverse of $(x, \xi)$ in $L^*$, respectively in $L^{**}$ is $(x^{-1}, f(x,x^{-1})^{-1} \xi ^{-1})$, respectively $(x^{-1}, \xi ^{-1} f(x,x^{-1})^{-1})$. Here is $x^{-1}= x \backslash e= e /x$ in $S$. \\
Hence for any element the left inverse coincides with the right inverse in a loop $L$, $L^{\ast }$ and $L^{\ast \ast }$ if and only if 
$f(x^{-1},x)= \xi ^{-1} f(x,x^{-1}) \xi $ for all $x \in S$ and $\xi \in A$.

\noindent
For $L^*$ the group $A$ is contained in the left and in the right nucleus of $L^*$, for $L^{**}$ the group $A$ is contained in the left and in the middle nucleus of $L^{**}$. The group $A$ is never contained in the nucleus of $L^*$ as well as of $L^{**}$.

\noindent
If for a loop $L^{\ast }$ the set $\{f(x,y); \ x,y \in S \}$ is not contained in the centre of $A$, then one checks easily that $L^{\ast }$ does not satisfy the following identities: (\ref{equc1}), (\ref{equc2}), (\ref{equc4}), (\ref{equc3}), (\ref{equc6}), (\ref{equk1}), (\ref{equk2}), (\ref{equweek}), (\ref{equk3}), (\ref{equk4}). Hence any such loop $L^{\ast }$ is far from the associativity.  
\newline
\noindent
If for a loop $L^{\ast \ast }$ the set $\{f(x,y); \ x,y \in S \}$ is not contained in the centre of $A$, then it follows that $L^{\ast \ast }$ does not satisfy the identities: (\ref{equc1}), (\ref{equc4}), (\ref{equc6}), (\ref{equk1}), (\ref{equk2}), (\ref{equweek}), (\ref{equk4}). Now we determine the conditions under which a loop $L^{\ast \ast }$ satisfies conditions 
(2.2), (2.4) and (2.9).  

\begin{Prop} \label{uj rightalternative} 
A Steiner-like loop $L^{\ast \ast }$ is left alternative or it satisfies the left inverse property precisely if  
the set ${\mathcal F}= \{ f(z, z); \ z \in S \}$ is contained in the centre of $A$ and identity (\ref{smallequ3}) in Theorem  \ref{Prop3} holds.   
\end{Prop}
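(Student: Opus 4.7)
The plan is to expand the left alternative identity and the left inverse identity in $L^{\ast\ast}$ using the multiplication $(x,\xi)(y,\eta)=(xy,\xi\eta f(x,y))$, and to observe that both reduce to one single equation in $A$.

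First I would compute the left alternative identity. Since $S$ is a Steiner loop one has $x\cdot x=e$ and $x\cdot(xy)=y$, so
\begin{equation*}
((x,\xi)(x,\xi))(y,\eta)=(e,\xi^{2}f(x,x))(y,\eta)=(y,\xi^{2}f(x,x)\eta),
\end{equation*}
using $f(e,y)=1$, whereas
\begin{equation*}
(x,\xi)\bigl((x,\xi)(y,\eta)\bigr)=(x,\xi)(xy,\xi\eta f(x,y))=(y,\xi^{2}\eta f(x,y)f(x,xy)).
\end{equation*}
Equating the second coordinates and cancelling $\xi^{2}$ yields the key relation
\begin{equation*}
f(x,x)\,\eta\;=\;\eta\,f(x,y)\,f(x,xy)\qquad\text{for all }x,y\in S,\ \eta\in A.
\end{equation*}
For the left inverse property I would use that $(x,\xi)^{\lambda}=(x,f(x,x)^{-1}\xi^{-1})$ because $x^{-1}=x$ in the Steiner loop and $f(x^{-1},x)=f(x,x)$; a parallel computation gives $(x,\xi)^{\lambda}\cdot((x,\xi)(y,\eta))=(y,f(x,x)^{-1}\eta f(x,y)f(x,xy))$, and setting this equal to $(y,\eta)$ produces exactly the same relation.

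Then I would extract the two stated conditions. Putting $y=x$ in the relation gives $xy=e$, hence $f(x,xy)=1$ and $f(x,y)=f(x,x)$, so it reduces to $f(x,x)\eta=\eta f(x,x)$ for every $\eta\in A$; this is precisely $\mathcal F\subset Z(A)$. Once $f(x,x)$ is central, the relation for $x\neq y$ in $S\setminus\{e\}$ becomes $f(x,y)f(x,xy)=f(x,x)$; substituting $f(x,y)=h(x)h(y)$ and $f(x,xy)=h(x)h(xy)$ (both legitimate since $y,xy\in S\setminus\{e\}$ with $y\neq x$ and $xy\neq x$) recovers identity (\ref{smallequ3}) of Theorem \ref{Prop3}. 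The converse is immediate: if $\mathcal F\subset Z(A)$ and (\ref{smallequ3}) hold, then $\eta f(x,y)f(x,xy)=\eta\,h(x)h(y)h(x)h(xy)=\eta f(x,x)=f(x,x)\eta$, so the relation is satisfied and both properties follow by reversing the computations.

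I do not foresee any genuine obstacle. The one point worth emphasising, and the reason this statement differs from the analogous Proposition \ref{rightalternative} for $L$, is that in $L^{\ast\ast}$ the factor $f(x,y)$ sits to the right of the product $\xi\eta$ rather than between $\xi$ and $\eta$, so it is never conjugated by $\xi$. Consequently one does not need the extra hypothesis that $\mathcal K=\{h(x)h(y);\ x\neq y\in S\setminus\{e\}\}$ is central in $A$, only that $\mathcal F$ is.
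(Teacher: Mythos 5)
Your proposal is correct and follows essentially the same route as the paper: both reduce the left alternative law and the left inverse property in $L^{\ast\ast}$ to the single relation $\eta\,f(x,y)f(x,xy)\,\eta^{-1}=f(x,x)$, obtain $\mathcal F\subset Z(A)$ by setting $x=y$, and recover identity (\ref{smallequ3}) from the case $x\neq y$ via the function $h$. Your closing remark about why the hypothesis on $\mathcal K$ disappears for $L^{\ast\ast}$ (the factor $f(x,y)$ is never conjugated by $\xi$) is also the correct explanation of the contrast with Proposition \ref{rightalternative}.
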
 
\begin{proof} 
The loop $L^{\ast \ast }$ satisfies identity (\ref{equc2}) as well as identity (\ref{equc3}) if and only if 
\begin{equation} \label{equleft3}  \eta f(x, y) f(x,xy) \eta ^{-1}= f(x,x) \nonumber \end{equation} 
holds for all $x,y \in S$, $\eta \in A$.  
With $x=y$ we obtain  that $\{ f(x,x); \ x \in S \}$ is contained in the centre of $A$. Using the function $h$ for $x \neq y$ this identity  yields 
identity (\ref{smallequ3}) in Theorem \ref{Prop3} and the assertion follows. \end{proof}

\begin{Prop} 
Let $L^{\ast \ast }$ be a proper Steiner-like loop. 
\newline
\noindent
a) If $S=\{ e, x, y, xy \}$ is the elementary abelian group of order $4$, then  
the loop $L^{\ast \ast }$ satisfies the left Bol identity if and only if the range of $h$ is commutative, the set 
$\{ f(x,x);\ x \in S \}$ is contained in the centre of $A$ and identity (\ref{smallequ3}) of Theorem \ref{Prop3} holds.  
\newline
\noindent
b) If the Steiner loop $S$ has more than $4$ elements, then the loop $L^{\ast \ast }$ is a left Bol loop precisely if $S$ is an elementary abelian $2$-group and for all $x \in S \backslash \{ e \}$ one has $h(x)=t$ and $f(x,x)=t^4$ with a fixed element $t$ of $A$, but $t^2 \notin Z(A)$. 
\end{Prop}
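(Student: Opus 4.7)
The plan is to mimic the analysis of Proposition \ref{smallleftbol}, adapted to the multiplication $(x,\xi)(y,\eta) = (xy, \xi\eta f(x,y))$ of $L^{\ast\ast}$ and to the left Bol identity $[x\cdot(y\cdot x)]\cdot z = x\cdot[y\cdot(x\cdot z)]$. First I compute both sides in $L^{\ast\ast}$ and equate first and second components. The first-component equality forces the Steiner loop $S$ itself to be a left Bol loop, so by Lemma \ref{Steinergroup} $S$ is an elementary abelian $2$-group; in particular $x(yx)=y$ and $x(y(xz))=yz=(x(yx))z$. Specialising $y=e$ in the left Bol identity yields the left alternative law, so Proposition \ref{uj rightalternative} gives at once that $\{f(x,x);\,x\in S\}\subset Z(A)$ and that identity (\ref{smallequ3}) holds.

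After these reductions, cancelling $\alpha\beta\alpha$ from the second components of the two sides turns the left Bol identity into
\begin{equation*}
f(y,x)\,f(x,xy)\,\gamma\,f(y,z) \;=\; \gamma\,f(x,z)\,f(y,xz)\,f(x,xyz)
\end{equation*}
for all $x,y,z\in S$ and $\gamma\in A$. Substituting $z=e$ gives $f(y,x)\,f(x,xy)\in Z(A)$; writing $f$ in terms of $h$ and using (\ref{smallequ3}) identifies this product with $f(x,x)$ once we know $h(x)$ commutes with $h(y)$, so this becomes the crucial centrality condition.

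For part (a) with $|S|=4$, the variable $z$ ranges only over $\{e,x,y,xy\}$. Substituting each of these in turn, using $\{f(x,x)\}\subset Z(A)$, (\ref{smallequ3}), and $h(x)^2\in Z(D)$ from Lemma \ref{hxinvolution}(a), one checks that each case is equivalent to the commutativity of the pairs $\{h(x),h(y)\}$, $\{h(x),h(xy)\}$, $\{h(y),h(xy)\}$ in $A$. This gives the forward direction: left Bol implies that the range of $h$ is commutative. Conversely, assuming range of $h$ commutative plus (ii) and (iii), all four $z$-cases collapse (since then $f(y,x)f(x,xy)=h(y)h(x)^2h(xy)=f(x,x)\in Z(A)$, etc.), and the identity is verified directly.

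For part (b) with $|S|>4$, I follow the trick used in the proof of Proposition \ref{smallleftbol}(b): having already derived that range of $h$ is commutative from part (a) applied inside each subloop of order $4$, the identity reduces to a relation between $h$-values at different elements. Picking two distinct elements $z_1\neq z_2\in S\setminus\{e,x,y,xy\}$ and writing the Bol identity twice, then comparing, isolates an equation of the form $h(y)h(xy)=h(xz)h(xyz)$ (as in the derivation of $h(xy)=h(ky)$ in Proposition \ref{smallleftbol}); varying $z$ and interchanging $x$ with $y$ forces $h(x)=t$ to be constant on $S\setminus\{e\}$, and then $f(x,x)=h(x)h(y)h(x)h(xy)=t^4$. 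Finally, if we had $t^2\in Z(A)$, Theorem \ref{groupuj} would make $L^{\ast\ast}$ a group, contradicting the hypothesis that $L^{\ast\ast}$ is proper. The converse is a direct substitution: with $h$ constant and $f(x,x)=t^4$, every occurrence of $f$ in the Bol identity becomes a power of $t$, and the identity is trivially satisfied.

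The main obstacle is the passage from the $z=e$ constraint $f(y,x)f(x,xy)\in Z(A)$ to the genuine commutativity of the range of $h$ in part (a): the only centrality information available a priori is $h(x)^2\in Z(D)$ (not $Z(A)$), so one must combine several $z$-substitutions with identity (\ref{smallequ3}) to pin down pairwise commutativity. In part (b) the technical heart is the two-parameter comparison that eliminates $z$ and leaves a rigidity equation forcing $h$ to be constant, for which the assumption $|S|>4$ (so that sufficiently many distinct $z$ exist) is essential.
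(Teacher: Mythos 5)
Your proposal is correct and follows essentially the same route as the paper: both reduce the left Bol identity to the relation $f(y,x)f(x,xy)\alpha f(y,k)=\alpha f(x,k)f(y,xk)f(x,xyk)$, invoke left alternativity (Proposition 7.1) to get $\mathcal{F}\subset Z(A)$ and identity (5.2), extract commutativity of the range of $h$ from the $k=e$ specialization together with Lemma 5.2 a), and for $|S|>4$ use the same interchange-and-compare argument as in Proposition 6.8 to force $h$ constant. The centrality step you flag as the "main obstacle" is in fact immediate: $z=e$ gives $h(y)h(x)^2h(xy)\in Z(A)$, and commuting this with $h(xy)$ while using $h(x)^2\in Z(D)$ yields $h(y)h(xy)=h(xy)h(y)$ directly.
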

\begin{proof}  The loop $L^{\ast \ast }$ is a left Bol loop if and only if the Steiner loop $S \cong L/A$ is an elementary abelian $2$-group 
(cf. Lemma \ref{Steinergroup}) and for all $x,y,k \in S$ and $\alpha  \in A$ the identity 
\begin{equation} \label{equc22} f(y,x) f(x, xy) \alpha f(y,k) = \alpha f(x,k) f(y, x k) f(x, x y k) \end{equation} 
holds. 
Since every left Bol loop is left alternative, the set $\{ f(x,x); \ x \in S \}$ is contained  in the centre of $A$ and identity (\ref{smallequ3}) 
of Theorem \ref{Prop3} holds (cf. Proposition  \ref{uj rightalternative}). Using the function $h$ and Lemma \ref{hxinvolution} a), putting  $k=e$ and $\alpha = h(xy)^{-1}$ into (\ref{equc22}) we obtain that the range of $h$ is commutative. Therefore, if $|S|=4$ we obtain assertion a). 
\newline
\noindent
Now we assume that $|S| > 4$. Interchanging $x$ and $k$ in (\ref{equc22}) we get 
\begin{equation} \label{equcuj22} f(y,k) f(k, ky) \alpha f(y,x) = \alpha f(k,x) f(y, x k) f(k, x y k) \end{equation} 
for all $x,y,k \in S$, $\alpha \in A$. The same consideration as in the proof of Proposition \ref{smallleftbol} yields that 
 $h(x)= t$ and $f(x,x)=t^4$ with a fixed element 
$t \in A$ for all $x \in S \setminus \{ e \}$. 

Conversely, if $h(x)=t$ and $f(x,x)=t^4$, but $t^2 \notin Z(A)$, then identity (\ref{equc22}) holds
for all $x,y,k \in S$ and $\alpha  \in A$. Hence $L$ is a left Bol loop. \end{proof}

\section{Groups generated by translations} 
Let $L$ be a Steiner-like loop. 
A left translation $\lambda_{(a, \alpha )}: L \to L$ is the  bijection
\[(x, \xi ) \longmapsto (a, \alpha )(x, \xi ) = (ax, f(a, x) \alpha \xi ) \]
and  a right translation $\rho_{(a, \alpha )}: L \to L$ is the bijection
\[(x, \xi ) \longmapsto (x, \xi )(a, \alpha ) = (ax, f(x, a) \xi \alpha ). \]
\noindent
The  group $G_l$ generated by all left translations of $L$ coincides with the group $G_r$ generated by all right translations of $L$ if and only if 
the group $A$ is abelian. Namely if the element $\alpha $ is not contained in the centre of $A$ and $\rho_{(e, \alpha )}$ is a product of left translations of $L$, then we have $\alpha \xi =\xi \alpha '$ or $\xi ^{-1} \alpha \xi = \alpha '$ 
for all $\xi \in A$ with a suitable $\alpha '$. Hence $\alpha = \alpha '$ and $A$ is commutative. 

For all $\alpha \in A$ the map $\iota_{\alpha}:\ \ (x, \xi ) \longmapsto (x, \alpha^{-1} \xi \alpha )$ is an automorphism of $L$ if and only if 
the  set $\{f(x, y); x, y \in S\}$ is contained in the centre of $A$. 

\begin{Prop} \label{groupalltranslation} Let  $L$ be a Steiner-like loop  such that any $\iota_{\alpha}$ is an automorphism of $L$ and 
let $\Lambda$ be the group generated by the set $\{ \iota_{\alpha}, \alpha \in A \}$.
Then the  group $G$ generated by all  
translations of $L$ coincides with the group generated  by $G_l$ and $\Lambda$.
\end{Prop}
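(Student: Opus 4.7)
The plan is to prove the equality $G = \langle G_l, \Lambda\rangle$ by two inclusions, with the key ingredient being the identity $\rho_{(a,\alpha)} = \lambda_{(a,\alpha)}\circ\iota_\alpha$. I would first collect the consequences of the standing hypothesis. Since $\iota_\alpha$ is an automorphism for every $\alpha \in A$, the set $\{f(x,y);\ x,y\in S\}$ lies in $Z(A)$, as recalled in the paragraph preceding the statement. By Lemma~\ref{hxcommutative} the group $D=\langle h(z);\ z\in S\setminus\{e\}\rangle$ is then abelian, so whenever $a\neq x$ and $a,x\in S\setminus\{e\}$ one has $f(a,x)=h(a)h(x)=h(x)h(a)=f(x,a)$, and the remaining cases ($a=e$, $x=e$, $a=x$) are immediate. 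Thus $f$ is symmetric on $S\times S$, and one may freely interchange $f(a,x)$ with $f(x,a)$ as well as $ax$ with $xa$ (since $S$ is a Steiner loop).

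With these two reductions the core computation reads
\begin{equation*}
(\lambda_{(a,\alpha)}\circ\iota_\alpha)(x,\xi)=\lambda_{(a,\alpha)}(x,\alpha^{-1}\xi\alpha)=(ax,f(a,x)\alpha\cdot\alpha^{-1}\xi\alpha)=(xa,f(x,a)\xi\alpha)=\rho_{(a,\alpha)}(x,\xi),
\end{equation*}
so $\rho_{(a,\alpha)}=\lambda_{(a,\alpha)}\circ\iota_\alpha$ for every $(a,\alpha)\in L$. This identity gives $G_r\subseteq\langle G_l,\Lambda\rangle$, hence $G=\langle G_l,G_r\rangle\subseteq\langle G_l,\Lambda\rangle$. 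Rearranging the same identity to $\iota_\alpha=\lambda_{(a,\alpha)}^{-1}\circ\rho_{(a,\alpha)}$ for any fixed $a\in S$ shows $\Lambda\subseteq G$, and therefore $\langle G_l,\Lambda\rangle\subseteq G$, which closes the argument.

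I do not expect any serious obstacle here. The only subtlety worth flagging is that the centrality hypothesis on $f$ plays a double role: it makes each $\iota_\alpha$ an automorphism (so that $\Lambda$ is a well-defined subgroup of $\mathrm{Aut}(L)$) and, combined with Lemma~\ref{hxcommutative}, it forces the symmetry $f(a,x)=f(x,a)$, without which the right and left translations could not be reconciled by a single inner twist. Everything else is bookkeeping on the formula for the product in $L$.
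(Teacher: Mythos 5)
Your proof is correct and follows essentially the same route as the paper: the paper's entire argument is the observation that $\rho_{(a,\alpha)}=\iota_{\alpha}\lambda_{(a,\alpha)}$ holds precisely when $f(x,a)=f(a,x)$, which is guaranteed because $\iota_{\alpha}\in\mathrm{Aut}(L)$ forces the values $f(x,y)$ into $Z(A)$ and hence (Lemma~\ref{hxcommutative}) makes $f$ symmetric. You merely spell out the two inclusions and the symmetry argument that the paper leaves implicit, which is fine.
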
 
\begin{proof} One has $\rho_{(a, \alpha )} = \iota_{\alpha } \lambda_{(a, \alpha )}$ if and only if 
$f(x, a) = f(a, x)$ for all $ x, a \in S$ which is true since $\iota_{\alpha }$ is an automorphism of $L$. 
\end{proof}

\noindent 
One has 
$$[\rho_{(a, \gamma)}^{-1}{\rho_{(e, \alpha)}}\rho_{(a, \gamma)}] (x, \xi) 
= (x, \xi \gamma \alpha \gamma^{-1} ) = \ \rho_{(e, \gamma \alpha \gamma^{-1})} (x, \xi ).$$
As $\rho _{(a, \alpha )}= \rho _{(e, \alpha )} \rho _{(a,1)}$  it follows:  

\begin{Prop} \label{lefttranslation} Let $L$ be a Steiner-like loop. Then the  group $G_r$ generated by all right translations of $L$ is an extension of $A$ by the group $\Sigma $ generated by the set $\{\rho _{(a,1)};\ a \in S \}$.
\end{Prop}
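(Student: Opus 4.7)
The plan is to identify the subgroup $N := \{\rho_{(e,\alpha)} : \alpha \in A\}$ of $G_r$ as a normal subgroup isomorphic to $A$, and then use the factorization $\rho_{(a,\alpha)} = \rho_{(e,\alpha)}\rho_{(a,1)}$ recorded just before the statement to conclude $G_r = N\Sigma$, which gives the claimed extension.

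First I would verify that $N \cong A$ as abstract groups. Applying $\rho_{(e,\alpha)}\rho_{(e,\beta)}$ to an arbitrary $(x,\xi)\in L$ and using $f(e,x)=f(x,e)=1$ yields
\[ \rho_{(e,\alpha)}\rho_{(e,\beta)}(x,\xi) = \rho_{(e,\alpha)}(x,\xi\beta) = (x,\xi\beta\alpha) = \rho_{(e,\beta\alpha)}(x,\xi), \]
so $\alpha \mapsto \rho_{(e,\alpha)}$ is an anti-isomorphism $A \to N$ (equivalently, $\alpha \mapsto \rho_{(e,\alpha^{-1})}$ is an isomorphism), whence $N \cong A$. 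Next, the conjugation identity $\rho_{(a,\gamma)}^{-1}\rho_{(e,\alpha)}\rho_{(a,\gamma)} = \rho_{(e,\gamma\alpha\gamma^{-1})}$ displayed just before the statement shows that every generator $\rho_{(a,\gamma)}$ of $G_r$ normalizes $N$; hence $N$ is a normal subgroup of $G_r$.

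Finally, the identity $\rho_{(a,\alpha)} = \rho_{(e,\alpha)}\rho_{(a,1)}$ expresses each generator of $G_r$ as an element of $N\Sigma$. Using normality of $N$ to commute $N$-factors past $\Sigma$-factors in any word over the generators, one obtains $G_r = N\Sigma$. Combined with normality of $N$, this yields the short exact sequence
\[ 1 \longrightarrow A \longrightarrow G_r \longrightarrow G_r/N \longrightarrow 1, \]
whose quotient $G_r/N$ is generated by the cosets $\rho_{(a,1)}N$, $a \in S$, thereby identifying $G_r$ as an extension of $A$ by $\Sigma$. Both key computations are already recorded immediately before the proposition, so essentially the only bookkeeping is the identification of $G_r/N$ with $\Sigma$ via the canonical map $\sigma \mapsto \sigma N$; I do not expect any substantial obstacle.
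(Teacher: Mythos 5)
Your proposal is correct and follows essentially the same route as the paper, whose entire proof consists of the two displayed identities you cite (the conjugation formula $\rho_{(a,\gamma)}^{-1}\rho_{(e,\alpha)}\rho_{(a,\gamma)}=\rho_{(e,\gamma\alpha\gamma^{-1})}$ and the factorization $\rho_{(a,\alpha)}=\rho_{(e,\alpha)}\rho_{(a,1)}$) followed by ``it follows''. You merely make explicit the bookkeeping (that $N\cong A$, that $N\trianglelefteq G_r$, and that $G_r=N\Sigma$) that the paper leaves implicit.
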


\noindent
For the left translations of $L$ one has  
$$[\lambda_{(a, \gamma)}^{-1} \lambda_{(e, \alpha)} \lambda_{(a, \gamma)}] (x, \xi) = 
(x, \gamma ^{-1} f(a,x)^{-1} \alpha  f(a,x) \gamma \xi ).$$ 
To obtain an analogous result for the group $G_l$ generated by all left translations of $L$ we must suppose 
that the set $\{ f(x,y);\ x,y \in S \}$ is contained in the centre of $A$. 
In this case we get $[\lambda_{(a, \gamma)}^{-1} \lambda_{(e, \alpha)} \lambda_{(a, \gamma)}] = \lambda_{(e,\gamma^{-1} \alpha \gamma )}$.  
From the fact that $\lambda _{(a, \alpha )}= \lambda _{(a,1)} \lambda _{(e, \alpha )} $ as well as from Propositions \ref{flexible},    \ref{rightalternative}, \ref{crossinverse}, \ref{automorphicinverse}, \ref{weekinverse}, \ref{groupalltranslation}   it follows:

\begin{Prop} \label{righttranslation} Let $L$ be a Steiner-like loop. We assume that $L$ has one of the following properties:
\newline
\noindent
a) $L$ is flexible, 
\newline
\noindent
b) $L$ is left alternative or it has the left inverse property,  
\newline
\noindent
c) $L$ has the cross inverse property, 
\newline
\noindent
d) $L$ has the automorphic inverse property, 
\newline
\noindent
e) $L$ has the weak inverse property.  
\newline
\noindent
Then the  group $G$ generated by all translations of $L$ coincides with the group generated  by $G_l$ and $\Lambda $.  
Moreover, the  group $G_l$ is 
an extension of $A$ by the group $\Sigma '$ generated by the set $\{\lambda _{(a,1)};\ a \in S \}$.
\end{Prop}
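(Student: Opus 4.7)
The plan is to verify that each of the five hypotheses a)--e) forces the central containment $\{f(x,y);\ x,y \in S\} \subseteq Z(A)$, and then to deduce both conclusions from the material accumulated immediately before the proposition.

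First I would check the centrality step case by case, reading off from earlier propositions. For a), Proposition \ref{flexible} asserts the containment directly. For b), Proposition \ref{rightalternative} shows that both $\mathcal{F}=\{f(z,z); z \in S\}$ and $\mathcal{K}=\{h(x)h(y); x\neq y \in S\setminus\{e\}\}$ lie in $Z(A)$; combined with $f(x,e)=f(e,x)=1$ and the identification $f(x,y)=h(x)h(y)$ for $x\neq y$ in $S\setminus\{e\}$, this exhausts all values of $f$. For c) and d), Propositions \ref{crossinverse} and \ref{automorphicinverse} force $A$ itself to be abelian, so $Z(A)=A$. For e), Proposition \ref{weekinverse} states the containment as part of its conclusion.

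Once the central containment is established, the map $\iota_\alpha:(x,\xi)\mapsto (x,\alpha^{-1}\xi\alpha)$ is an automorphism of $L$ by the observation preceding Proposition \ref{groupalltranslation}. Hence by that proposition the group $G$ of all translations of $L$ equals $\langle G_l,\Lambda\rangle$, which is the first assertion.

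For the extension statement, I would use the conjugation formula displayed before the proposition. Since $f(a,x)\in Z(A)$, the computation reduces to
\[
\lambda_{(a,\gamma)}^{-1}\lambda_{(e,\alpha)}\lambda_{(a,\gamma)}(x,\xi)=(x,\gamma^{-1}\alpha\gamma\,\xi)=\lambda_{(e,\gamma^{-1}\alpha\gamma)}(x,\xi),
\]
so the subgroup $N=\{\lambda_{(e,\alpha)};\alpha\in A\}$ is normalized by every generator $\lambda_{(a,\gamma)}$ of $G_l$ and is therefore normal in $G_l$. The identity $\lambda_{(e,\alpha)}\lambda_{(e,\beta)}=\lambda_{(e,\alpha\beta)}$ shows that $\alpha\mapsto\lambda_{(e,\alpha)}$ is an injective homomorphism $A\hookrightarrow N$ (injectivity is immediate from evaluating at $(e,1)$), and surjectivity is by definition of $N$, so $N\cong A$. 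Finally, the factorization $\lambda_{(a,\alpha)}=\lambda_{(a,1)}\lambda_{(e,\alpha)}$ shows that modulo $N$ the group $G_l$ is generated by the cosets of $\lambda_{(a,1)}$, $a\in S$, whence $G_l/N\cong\Sigma'$.

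The only real obstacle is the case analysis in the first step; after the centrality of the factor system is in hand, the argument is a direct transcription of the conjugation computation already displayed, plus an invocation of Proposition \ref{groupalltranslation}.
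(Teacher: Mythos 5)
Your proposal is correct and follows essentially the same route as the paper: the paper derives the proposition precisely from the factorization $\lambda_{(a,\alpha)}=\lambda_{(a,1)}\lambda_{(e,\alpha)}$, the displayed conjugation formula (which collapses once $f(a,x)\in Z(A)$), and the cited Propositions \ref{flexible}, \ref{rightalternative}, \ref{crossinverse}, \ref{automorphicinverse}, \ref{weekinverse}, \ref{groupalltranslation}, exactly the ingredients you assemble. Your write-up merely makes explicit the case-by-case verification that each hypothesis forces $\{f(x,y);\ x,y\in S\}\subseteq Z(A)$ and the normality of $\{\lambda_{(e,\alpha)};\ \alpha\in A\}$, which the paper leaves implicit.
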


\noindent
As an immediate consequence of Propositions \ref{lefttranslation} and \ref{righttranslation} the following holds.   

\begin{Prop} Let $L$ be a commutative Steiner-like loop. Then the  group $G$ generated by all translations of $L$ is 
a central extension of $A$ by the group $\Sigma '$ generated by the set $\{\lambda _{(a,1)};\ a \in S \}$.
\end{Prop}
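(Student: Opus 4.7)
The plan is to deduce the proposition as a near-immediate corollary of Propositions~\ref{lefttranslation} and~\ref{righttranslation}, with only one additional observation needed to upgrade the extension to a central one.

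First I would note that commutativity of $L$ forces the group $A$ to be commutative (cf.\ Section~3), and that any commutative loop is automatically flexible: from $xy=yx$ one gets $(xy)\cdot x = x\cdot (xy)$ by commutativity, and $x\cdot (yx)=x\cdot(xy)$ again by commutativity, so (\ref{equc1}) holds. Consequently Proposition~\ref{righttranslation}(a) applies and yields that $G=\langle G_l,\Lambda\rangle$ and that $G_l$ is an extension of $A$ by the group $\Sigma'$ generated by $\{\lambda_{(a,1)};\ a\in S\}$.

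Next I would observe that the commutativity of $A$ makes $\Lambda$ trivial: for every $\alpha\in A$ the map $\iota_\alpha$ sends $(x,\xi)$ to $(x,\alpha^{-1}\xi\alpha)=(x,\xi)$, hence $\Lambda=\{\mathrm{id}\}$ and so $G=G_l$ is already an extension of $A$ by $\Sigma'$.

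Finally, for centrality I would invoke the conjugation identity
\[
[\lambda_{(a,\gamma)}^{-1}\,\lambda_{(e,\alpha)}\,\lambda_{(a,\gamma)}](x,\xi)=(x,\ \gamma^{-1}f(a,x)^{-1}\alpha f(a,x)\gamma\,\xi)
\]
displayed in the paper just before Proposition~\ref{righttranslation}. Because $A$ is abelian the right hand side collapses to $(x,\alpha\xi)=\lambda_{(e,\alpha)}(x,\xi)$, so $\lambda_{(e,\alpha)}$ commutes with every generator $\lambda_{(a,\gamma)}$ of $G_l$ and therefore lies in the centre of $G_l=G$. Since the copy of $A$ embedded in $G$ is precisely $\{\lambda_{(e,\alpha)};\ \alpha\in A\}$, the extension is central. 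I do not foresee any substantive obstacle here: all the hard work has been done in the preceding propositions, and the single new ingredient is that abelianness of $A$ simultaneously trivializes $\Lambda$ and the conjugation action of $G_l$ on its copy of $A$.
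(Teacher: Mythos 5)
Your proposal is correct and follows essentially the route the paper intends: the paper offers no written proof, declaring the statement an immediate consequence of Propositions \ref{lefttranslation} and \ref{righttranslation}, and your argument simply supplies the details — commutativity of $L$ gives commutativity of $A$, which trivializes $\Lambda$ and collapses the displayed conjugation formula $[\lambda_{(a,\gamma)}^{-1}\lambda_{(e,\alpha)}\lambda_{(a,\gamma)}]=\lambda_{(e,\gamma^{-1}f(a,x)^{-1}\alpha f(a,x)\gamma)}$ to the identity conjugation, yielding centrality. No gaps.
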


\noindent 
The group  generated by the translations of a finite Steiner loop  $S$ of order $n > 3$  contains often the alternating group of degree $n$. This is for instance 
the case if the order of any product of two different translations of the Steiner triple system corresponding to $S$ is odd (cf. \cite{stuhl}, Theorem).

\section{Isomorphisms}   
\noindent 
Let $(L_1, \cdot)$ and $(L_2, \ast )$ be two loops of type (\ref{equextension}) realized on the set $S \times A$
by the multiplications $(x, \xi ) \cdot (y, \eta )=(xy, f_1(x,y) \xi \eta )$ respectively 
$(x, \xi ) \ast (y, \eta )=(xy, f_2(x,y) \xi \eta )$. We consider isomorphism $\alpha :L_1 \to L_2$ such that  
$(e, \xi)^{\alpha }=(e, \xi^{\alpha ''})$. 
For such an isomorphism $\alpha :L_1 \to L_2$ one has $(x,1)^{\alpha }=(x^{\alpha '}, \rho (x^{ \alpha '}))$, where 
$\rho $ is a map from $S$ onto $A$. Using this we have  
$$(x^{\alpha '}, \xi ^{\alpha ''} \rho (x^{\alpha '})) = (e, \xi)^{\alpha } \ast (x,1)^{\alpha } = 
[(e, \xi ) \cdot (x,1)]^{\alpha } =(x, \xi)^{\alpha }= $$ 
$$[(x,1) \cdot (e, \xi )]^{\alpha }= (x,1)^{\alpha } \ast (e, \xi)^{\alpha }= 
(x^{\alpha '}, \rho (x^{ \alpha '})) \ast (e, \xi^{\alpha ''})=(x^{\alpha '}, \rho (x^{\alpha '})
\xi^{\alpha ''}).$$  Hence  every element $\rho (x^{\alpha })$ lies in the centre of $A$. Moreover, one has 
\begin{equation} ((x y)^{\alpha '}, \rho ((xy)^{\alpha '}) f_1(x,y)^{\alpha ''} \xi ^{\alpha ''} \eta ^{\alpha ''})= 
[(x, \xi ) \cdot (y, \eta )]^{\alpha }= (x, \xi )^{\alpha } \ast (y, \eta )^{\alpha }= \nonumber \end{equation}
\begin{equation} 
(x^{\alpha '}, \rho (x^{\alpha '}) \xi ^{\alpha ''}) \ast (y^{\alpha '}, \rho (y^{\alpha '}) \eta ^{\alpha ''})= 
(x^{\alpha '} y^{\alpha '}, f_2(x^{\alpha '}, y^{\alpha '}) \rho (x^{\alpha '}) \xi ^{\alpha ''} \rho (y^{\alpha '}) \eta ^{\alpha ''}) \nonumber \end{equation} 
for all $x,y \in S$ and $ \xi, \eta \in A$. 
It follows $\alpha '$ is an automorphism of the loop $S$, $\alpha ''$ is an automorphism of $A$ and for all $x,y \in S$ identity 
\begin{equation} \label{equ52}  f_1(x,y)^{\alpha ''}= \rho ((xy)^{\alpha '})^{-1} \rho (x^{\alpha '}) \rho (y^{\alpha '}) 
f_2(x^{\alpha '}, y^{\alpha '}) \end{equation} 
holds. This discussion yields the following 

\begin{Prop} \label{isom} Let $(L_1, \cdot )$ and $(L_2, \ast )$  be two loops of type (\ref{equextension}) 
belonging to the functions 
$f_1$ respectively $f_2$. The map $\beta :L_1 \to L_2, (x, \xi ) \mapsto 
(x^{\alpha '}, \rho (x^{\alpha '}) \xi ^{\alpha ''})$ defines an isomorphism of $L_1$ onto $L_2$ mapping the group $A$ onto itself if and only 
if $\alpha '$ is an automorphism of the loop $S$, $\alpha ''$ is an automorphism of $A$, for the map $\rho :S \to A$ 
with $\rho (e)=1 \in A$ one has that the set 
$\{\rho (x^{\alpha '}); \ x \in S, \alpha ' \in Aut(S) \}$ is contained in the centre of $A$ and for all $x, y \in S$   identity (\ref{equ52}) holds. 
\end{Prop}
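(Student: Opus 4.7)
The forward direction is essentially already carried out in the discussion immediately preceding the statement. My plan is to package that computation and then supply the (short) converse.

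For the forward direction I would assume $\beta\colon L_1\to L_2$ is an isomorphism with $\beta(A)=A$. Since $A$ corresponds to the fiber $\{e\}\times A$, the restriction of $\beta$ to $A$ has the form $(e,\xi)\mapsto(e,\xi^{\alpha''})$, so projecting to the second coordinate defines a map $\alpha''\colon A\to A$, which is a group automorphism because $A$ is a subgroup of both $L_1$ and $L_2$ and multiplication of two elements of $A$ in either loop carries no $f$-contribution (since $f(e,y)=f(x,e)=1$). Next, $(x,1)^{\beta}$ must project onto an element of $S$ that depends only on $x$, giving a bijection $\alpha'\colon S\to S$, and I write $(x,1)^{\beta}=(x^{\alpha'},\rho(x^{\alpha'}))$ for some map $\rho$ with $\rho(e)=1$. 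Expanding $(e,\xi)\cdot(x,1)=(x,1)\cdot(e,\xi)=(x,\xi)$ and applying $\beta$ to both sides, as in the displayed computation before the proposition, forces $\rho(x^{\alpha'})\xi^{\alpha''}=\xi^{\alpha''}\rho(x^{\alpha'})$ for every $\xi\in A$, so every $\rho(x^{\alpha'})$ lies in $Z(A)$. Finally, expanding $[(x,\xi)\cdot(y,\eta)]^{\beta}=(x,\xi)^{\beta}\ast(y,\eta)^{\beta}$ and comparing second components yields identity (\ref{equ52}); comparing first components shows that $\alpha'$ is an automorphism of $S$.

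For the converse I would assume all four conditions and verify directly that $\beta(x,\xi)=(x^{\alpha'},\rho(x^{\alpha'})\xi^{\alpha''})$ is an isomorphism. Bijectivity is immediate: $\alpha'$ is a bijection of $S$, $\alpha''$ a bijection of $A$, and left multiplication by $\rho(x^{\alpha'})$ is a bijection of $A$, so the inverse of $\beta$ sends $(u,\zeta)\mapsto((\alpha')^{-1}(u),(\rho(u)^{-1}\zeta)^{(\alpha'')^{-1}})$. To check multiplicativity I compute
\[
(x,\xi)^{\beta}\ast(y,\eta)^{\beta}=\bigl(x^{\alpha'}y^{\alpha'},\,f_2(x^{\alpha'},y^{\alpha'})\rho(x^{\alpha'})\xi^{\alpha''}\rho(y^{\alpha'})\eta^{\alpha''}\bigr),
\]
and using the centrality of $\rho(x^{\alpha'})$ to commute it past $\xi^{\alpha''}$, together with identity (\ref{equ52}) in the form $f_2(x^{\alpha'},y^{\alpha'})\rho(x^{\alpha'})\rho(y^{\alpha'})=\rho((xy)^{\alpha'})f_1(x,y)^{\alpha''}$, this equals
\[
\bigl((xy)^{\alpha'},\,\rho((xy)^{\alpha'})f_1(x,y)^{\alpha''}\xi^{\alpha''}\eta^{\alpha''}\bigr)=[(x,\xi)\cdot(y,\eta)]^{\beta},
\]
where the outer equality uses that $\alpha''$ is a homomorphism of $A$. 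This is the required identity. Finally, $\beta(e,\xi)=(e,\xi^{\alpha''})\in A$, so $\beta$ maps $A$ onto itself.

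There is no real obstacle here; the only point that requires any care is using the centrality hypothesis precisely where it is needed, namely to move $\rho(x^{\alpha'})$ past $\xi^{\alpha''}$ in the displayed product, and to make sure one applies $\alpha''$ to the whole central factor $\rho(\cdot)$-free relation (\ref{equ52}) in the right way. Because the preceding paragraph of the paper already performs the same computation in the harder direction, the write-up can be kept short by simply invoking it for necessity and recording the converse as a direct verification.
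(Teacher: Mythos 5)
Your proposal is correct and follows essentially the same route as the paper: the necessity direction is exactly the displayed computation preceding the proposition (which the paper explicitly treats as the proof), and the sufficiency is the straightforward reverse verification using centrality of the $\rho$-values and identity (\ref{equ52}), which the paper leaves implicit and you spell out. The only nitpick is that in your converse computation it is $\rho(y^{\alpha'})$, not $\rho(x^{\alpha'})$, that must be commuted past $\xi^{\alpha''}$ — immaterial since both are assumed central.
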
 

\noindent
Let $(S, h)$ be a weighted Steiner loop. Putting $x=y$ into (\ref{equ52}) we obtain 
\begin{equation} \label{equ50}  f_1(x,x)^{\alpha ''} f_2(x^{\alpha '}, x^{\alpha '})^{-1}= 
\rho (x^{\alpha '})^2= 
f_2(x^{\alpha '}, x^{\alpha '})^{-1} f_1(x,x)^{\alpha ''}. \end{equation} 
Since the Steiner loop $S$ is commutative from (\ref{equ52}) we get 
\begin{equation} \label{equsymmetric52}  f_1(y,x)^{\alpha ''}= \rho ((xy)^{\alpha '})^{-1} \rho (x^{\alpha '}) \rho (y^{\alpha '}) 
f_2(y^{\alpha '}, x^{\alpha '}) \nonumber \end{equation} 
for all $x,y \in S$. 
If $x \neq y \in S \setminus \{ e \}$, then using the functions $h_i$, $i=1,2$, from (\ref{equ52}), respectively from the last identity it follows  identity 
\begin{equation}  \label{equ51}  h_1(x)^{\alpha ''} h_1(y)^{\alpha ''}= 
\rho ((x y)^{\alpha '})^{-1} \rho (x^{\alpha '}) \rho (y^{\alpha '}) h_2(x^{\alpha '}) h_2(y^{\alpha '}),   \end{equation}  
respectively 
\begin{equation}  \label{equ53}  h_1(y)^{\alpha ''} h_1(x)^{\alpha ''}= 
\rho ((x y)^{\alpha '})^{-1} \rho (x^{\alpha '}) \rho (y^{\alpha '}) h_2(y^{\alpha '}) h_2(x^{\alpha '}).  \nonumber  \end{equation} 
Comparing these two identities one has 
\begin{equation} \label{equisomruj3}   [h_1(y), h_1(x)]^{\alpha ''} = [h_2(y^{\alpha '}),  h_2(x^{\alpha '})].  
\end{equation}

\noindent
Hence we have the following 

\begin{Coro} \label{isomsteiner} Let $L_1$ and $L_2$ be two Steiner-like loops with respect to the functions $f_1(x,y)$,  
respectively $f_2(x,y)$. The mapping $\beta :L_1 \to L_2; (x, \xi ) \mapsto 
(x^{\alpha '}, \rho (x^{\alpha '}) \xi ^{\alpha ''})$ is an isomorphism of $L_1$ onto $L_2$ mapping the group $A$ onto itself if and only if $\alpha '$ is an automorphism of the Steiner loop $S$, $\alpha ''$ is an automorphism of $A$, the set 
$\{\rho (x^{\alpha '}); \ x \in S, \alpha' \in Aut(S) \}$ is contained in the centre of $A$, 
for all $x \in S$ identities (\ref{equ50}) 
and for all $x, y \in S \setminus \{ e \}$, $x \neq y$ identity (\ref{equ51}) are satisfied.   
Moreover, if $\beta :L_1 \to L_2$ is an isomorphism, then identity (\ref{equisomruj3}) holds. 
\end{Coro}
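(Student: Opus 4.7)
The plan is to deduce this corollary directly from Proposition \ref{isom} by specializing the master identity (\ref{equ52}) to the Steiner-like setting, where $f_i(x,y) = h_i(x)h_i(y)$ on $(S \setminus \{e\})^2$ off the diagonal, and $f_i(x,e) = f_i(e,y) = 1$. Proposition \ref{isom} already tells us that $\beta$ is an isomorphism mapping $A$ onto itself precisely when $\alpha'$ is an automorphism of the loop $S$, $\alpha''$ is an automorphism of $A$, the values $\rho(x^{\alpha'})$ all lie in $Z(A)$, and identity (\ref{equ52}) holds for every pair $(x,y) \in S \times S$. So the task reduces to exhibiting (\ref{equ52}) as the conjunction of (\ref{equ50}) and (\ref{equ51}), taking into account the special features of a Steiner loop.

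I would split the verification of (\ref{equ52}) into three exhaustive cases. The boundary case $x = e$ or $y = e$ is automatic: since $f_i(x,e) = f_i(e,y) = 1$ and $\rho(e) = 1$, both sides of (\ref{equ52}) collapse to the same element after cancelling the central factor (using that $\rho$-values lie in $Z(A)$). The diagonal case $x = y$ uses the Steiner identity $x \cdot x = e$: then $(xy)^{\alpha'} = e$, so $\rho((xy)^{\alpha'}) = 1$, and (\ref{equ52}) becomes $f_1(x,x)^{\alpha''} = \rho(x^{\alpha'})^2 f_2(x^{\alpha'}, x^{\alpha'})$; since $\rho(x^{\alpha'}) \in Z(A)$ commutes with $f_2(x^{\alpha'}, x^{\alpha'})$, this rearranges to the two-sided equality (\ref{equ50}). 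Finally, the off-diagonal case $x \neq y \in S \setminus \{e\}$ is handled by substituting $f_i(x,y) = h_i(x) h_i(y)$ into (\ref{equ52}) and reading off (\ref{equ51}) verbatim.

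For the supplementary statement on commutators, I would exploit the commutativity of the Steiner loop $S$. Since $xy = yx$ in $S$, the prefactor $\rho((xy)^{\alpha'})^{-1} \rho(x^{\alpha'}) \rho(y^{\alpha'})$ on the right-hand side of (\ref{equ51}) is unchanged when $x$ and $y$ are swapped, and it lies in $Z(A)$. Writing (\ref{equ51}) both for the pair $(x,y)$ and for the pair $(y,x)$ and cancelling this central prefactor yields $h_1(x)^{\alpha''} h_1(y)^{\alpha''} \cdot h_2(y^{\alpha'}) h_2(x^{\alpha'}) = h_1(y)^{\alpha''} h_1(x)^{\alpha''} \cdot h_2(x^{\alpha'}) h_2(y^{\alpha'})$, which is precisely the commutator identity (\ref{equisomruj3}). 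The whole argument is routine bookkeeping; the only point demanding care is the systematic use of the centrality of the $\rho$-values to commute them past other elements, so no real obstacle arises.
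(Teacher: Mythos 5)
Your proof is correct and follows essentially the same route as the paper: both specialize identity (\ref{equ52}) of Proposition \ref{isom} to the diagonal (using $x\cdot x=e$ and the centrality of the $\rho$-values) to obtain (\ref{equ50}), substitute $f_i(x,y)=h_i(x)h_i(y)$ off the diagonal to obtain (\ref{equ51}), and derive the commutator identity by writing the off-diagonal identity for $(x,y)$ and $(y,x)$ and eliminating the common central prefactor. One small bookkeeping slip: cancelling the central factor gives $h_1(x)^{\alpha''}h_1(y)^{\alpha''}\bigl(h_2(x^{\alpha'})h_2(y^{\alpha'})\bigr)^{-1}= h_1(y)^{\alpha''}h_1(x)^{\alpha''}\bigl(h_2(y^{\alpha'})h_2(x^{\alpha'})\bigr)^{-1}$, from which (\ref{equisomruj3}) follows at once, whereas the rearranged product equation you display is not literally equivalent to the commutator identity when $A$ is non-abelian.
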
 

\noindent
Now we consider isomorphisms $\beta $ from the Steiner-like loop $L_1$ onto the Steiner-like loop $L_2$ such that the corresponding Steiner loop  
$S$ has more than $2$ elements and such that $\beta $ induces on the factor loop $L_i/A \cong S$ as well as on the group $A$ the identity. 
In this case $\alpha ''= \alpha '=1$ and identities (\ref{equ50}) and (\ref{equ51}) yield that if $L_1$ and $L_2$ are isomorphic, then for all $x,y \in S \setminus \{ e \}$, 
$x \neq y$, we have the following identity: 
\begin{equation} \label{equidentity} h_1(y)^{-1} h_1(x)^{-1} h_1(y)^{-1} h_1(x)^{-1} h_2(x) h_2(y) h_2(x) h_2(y)= \nonumber \end{equation} 
\begin{equation}  f_1(xy, xy) f_2(xy, xy)^{-1} f_1(x,x)^{-1} f_2(x,x) f_1(y,y)^{-1} f_2(y,y). \nonumber \end{equation}

The last identity is automatically satisfied if one of the following properties holds:
\newline
a) both loops are left alternative or have the left inverse property, 
\newline
b) both loops have the cross inverse property, 
\newline
c) both loops have the automorphic inverse property,  
\newline
d) both loops have the weak inverse property and for all $x,y \in S$ identity $h_1(x) h_1(y) h_1(xy)= h_2(x) h_2(y) h_2(xy)=c$ holds 
with a fixed element $c \in A$.

\section{Automorphisms}   
If $(L_1, \cdot )= (L_2, \ast )$ in Section 9 we obtain automorphism of loops of type (\ref{equextension}).  Hence Proposition 
\ref{isom} yields the first part of the following

\begin{Prop} \label{automorphism} Let $L$ be a loop of type (\ref{equextension}).  
A mapping $\alpha : L \to L$ defined by 
$(x, \xi)^{\alpha }=(x^{\alpha '}, \rho(x^{\alpha '}) \xi^{\alpha ''})$ is an automorphism of $L$ if and only if $\alpha '$ is an automorphism of the loop $S$, $\alpha ''$ is an automorphism  of the group $A$, for the map $\rho :S \to A$ with 
$\rho (e)=1$ one has that   
the set $\{\rho (x^{\alpha '}); \ x \in S, \alpha ' \in Aut(S) \}$ is contained  in the centre $Z(A)$ of $A$ and for all $x, y \in S$ identity 
\begin{equation} \label{equ34}   f(x,y)^{\alpha ''}= \rho ((xy)^{\alpha '})^{-1} \rho (x^{ \alpha '}) 
\rho (y^{ \alpha '}) f(x^{\alpha '}, y^{\alpha '}) \end{equation} 
holds. The map $\rho : S \to Z(A)$, $x \mapsto \rho (x)$ is a homomorphism  from $S$ into $Z(A)$ if and only if there are 
$\alpha ' \in Aut(S)$, $\alpha '' \in Aut(A)$ such that 
$f(x,y)^{\alpha ''}=f(x^{\alpha '}, y^{\alpha '})$ for all $x,y \in S$. 
\end{Prop}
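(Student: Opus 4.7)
My plan is to derive both halves of Proposition~\ref{automorphism} as direct specializations of the isomorphism analysis already carried out in Section~9, adding only the algebraic manipulation that turns (\ref{equ34}) into a statement about homomorphisms.

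For the first assertion, I would simply invoke Proposition~\ref{isom} in the case $(L_1, \cdot ) = (L_2, \ast ) = L$ and $f_1 = f_2 = f$. An automorphism of $L$ mapping $A$ onto itself is an isomorphism $L_1 \to L_2$ in the sense of Proposition~\ref{isom}, so the conditions listed there (namely, $\alpha' \in \mathrm{Aut}(S)$, $\alpha'' \in \mathrm{Aut}(A)$, the range of $\rho$ lying in $Z(A)$, and identity (\ref{equ52})) translate verbatim into the conditions of the current proposition, with (\ref{equ52}) becoming (\ref{equ34}) after renaming $f_1 = f_2 = f$. It remains to check that any map of the given form sends $A$ to $A$; this is immediate since $(e, \xi)^\alpha = (e^{\alpha'}, \rho(e^{\alpha'}) \xi^{\alpha''}) = (e, \xi^{\alpha''})$ once we impose $\rho(e) = 1$.

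For the second assertion, I would start from (\ref{equ34}) and exploit that $\rho$ takes values in $Z(A)$, so the three $\rho$-factors on the right commute freely with everything. Assume first that $\rho$ is a homomorphism from $S$ into $Z(A)$; then $\rho((xy)^{\alpha'}) = \rho(x^{\alpha'} y^{\alpha'}) = \rho(x^{\alpha'}) \rho(y^{\alpha'})$, since $\alpha'$ is a loop automorphism of $S$. Substituting this into (\ref{equ34}) and cancelling the central factors yields
\begin{equation}
f(x,y)^{\alpha''} = f(x^{\alpha'}, y^{\alpha'}) \qquad \text{for all } x,y \in S. \nonumber
\end{equation}
Conversely, assume the displayed identity holds for some $\alpha' \in \mathrm{Aut}(S)$ and $\alpha'' \in \mathrm{Aut}(A)$. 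Then (\ref{equ34}) reduces to $\rho((xy)^{\alpha'}) = \rho(x^{\alpha'}) \rho(y^{\alpha'})$, and because $\alpha'$ is a bijection of $S$, the pair $(x^{\alpha'}, y^{\alpha'})$ ranges over all of $S \times S$; writing $u = x^{\alpha'}$, $v = y^{\alpha'}$ and using $(xy)^{\alpha'} = uv$ gives $\rho(uv) = \rho(u) \rho(v)$ for every $u, v \in S$, i.e.\ $\rho$ is a homomorphism into $Z(A)$.

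I do not foresee a genuine obstacle: the whole argument is a rearrangement of (\ref{equ34}) together with the centrality of the range of $\rho$ and the surjectivity of $\alpha'$. The only mildly delicate point is that the equivalence must be read in the context where $\alpha$ is already known to be an automorphism, so that $(\alpha', \alpha'', \rho)$ are fixed and satisfy (\ref{equ34}); once that reading is made explicit, the substitution and cancellation handle both directions simultaneously.
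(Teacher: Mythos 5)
Your proposal is correct and follows essentially the same route as the paper: the first assertion is obtained by specializing Proposition \ref{isom} to $(L_1,\cdot)=(L_2,\ast)=L$, and the second is exactly the "direct consequence of identity (\ref{equ34})" that the paper's one-line proof refers to, which you merely spell out via the substitution $\rho((xy)^{\alpha'})=\rho(x^{\alpha'})\rho(y^{\alpha'})$ and cancellation of central factors in both directions.
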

\begin{proof}  The last assertion is a direct  consequence of identity (\ref{equ34}). 
\end{proof}

\noindent
The kernel of the homomorphism $\rho $ contains the derived subloop $S'$ of the loop $S$ since $S'$ is the smallest subloop of $S$ such that $S/S'$ is
abelian. 

Conversely, any  homomorphism $\rho $ from $S$ into the centre $Z(A)$ determines an automorphism of $L$ which is the mapping $\beta _{\rho }: (x, \xi ) \mapsto (x, \rho (x) \xi )$. The set of these automorphisms forms the normal subgroup 
$\Psi $ of the automorphism group $\Gamma $ of $L$ which consists of automorphisms inducing on $S$ as well as on  $A$ the identity. The group $\Psi $ is commutative. 

Moreover, let $\alpha '$ be an automorphism of $S$ such that $f(x^{\alpha '}, y^{\alpha '})=  f(x,y)$ for all $x,y \in S$. Then 
the elements $\beta _{\alpha '}: (x, \xi ) \to (x^{\alpha '}, \xi)$ form a subgroup $\Sigma _1$ of the automorphism group $\Gamma $ of $L$. 
Let  $\alpha ''$ be an automorphism of $A$ such that $f(x, y)=  f(x,y)^{\alpha ''}$ for all $x,y \in S$. Then the elements 
$\beta _{ \alpha ''}: (x, \xi ) \mapsto (x, \xi ^{\alpha ''})$ form a subgroup $\Sigma _2$ of the automorphism group $\Gamma $ of $L$.

Let $\alpha '$ be an automorphism of $S$ and let $\alpha ''$ be an automorphism of $A$ such that 
$f(x^{\alpha '}, y^{\alpha '})=  f(x,y)^{\alpha ''}$ for all $x,y \in S$. Any such pair 
$(\alpha ', \alpha '')$ determines an automorphism of $L$, namely $\beta _{\alpha ', \alpha ''}= \beta _{\alpha '} \beta _{\alpha ''}= \beta _{\alpha ''} \beta _{\alpha '}: (x, \xi ) \mapsto (x^{\alpha '}, \xi ^{\alpha ''})$. The set of these automorphisms forms a group $\Sigma $ such that $\Psi \cap \Sigma =\{ 1 \}$.   

If $(L_1, \cdot )=(L_2, \ast )$ Corollary \ref{isomsteiner} yields the following

\noindent
\begin{Coro} \label{steinerautomorphismuj} Let $L$ be a  Steiner-like loop. The map $\beta : L \to L$ defined by 
$(x, \xi)^{\beta }=(x^{\alpha '}, \rho(x^{ \alpha '}) \xi^{\alpha ''})$ is an automorphism  of $L$ if and only if  for all $x \in S$  identity 
\begin{equation} \label{equsteineruj1}     
\rho(x^{\alpha '})^2 f(x^{\alpha '}, x^{\alpha '}) = f(x,x)^{\alpha ''} \end{equation}
and for all $x,y \in S \setminus \{ e \}$, $x \neq y$ identity 
\begin{equation} \label{equsteineruj2}  \rho ((xy)^{\alpha '})^{-1} \rho(x^{\alpha '}) \rho(y^{\alpha '}) h(x^{\alpha '}) 
h(y^{\alpha '})=  h(x)^{\alpha ''} h(y)^{\alpha ''}  \end{equation} 
hold. 
Moreover, if $\beta $ is an automorphism of $L$, then one has  
\begin{equation} \label{equsteineruj3}   [h(x), h(y)]^{\alpha ''} = [h(x^{\alpha '}),  h(y^{\alpha '})].  \nonumber 
\end{equation} 
\end{Coro}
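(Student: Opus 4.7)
The plan is to derive this corollary as a direct specialization of Corollary \ref{isomsteiner} to the case where the source and target loops coincide, $L_1 = L_2 = L$, so that $f_1 = f_2 = f$ and $h_1 = h_2 = h$. An automorphism of $L$ whose $A$-part is induced by an automorphism $\alpha''$ of $A$ and whose $S$-part is induced by an automorphism $\alpha'$ of the Steiner loop $S$ (with the prescribed shape $(x,\xi)^\beta = (x^{\alpha'}, \rho(x^{\alpha'}) \xi^{\alpha''})$) is a particular instance of the isomorphism considered in Corollary \ref{isomsteiner}; conversely, any such isomorphism between $L$ and itself is an automorphism.

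First I would invoke the characterization in Corollary \ref{isomsteiner}: the map $\beta$ is an automorphism exactly when $\alpha' \in \mathrm{Aut}(S)$, $\alpha'' \in \mathrm{Aut}(A)$, the set $\{\rho(x^{\alpha'}); x \in S\}$ lies in $Z(A)$, identity (\ref{equ50}) holds for every $x \in S$, and identity (\ref{equ51}) holds for all $x \neq y \in S \setminus \{e\}$. Then I would substitute $f_1 = f_2 = f$ into (\ref{equ50}): it becomes
\[
f(x,x)^{\alpha''} f(x^{\alpha'}, x^{\alpha'})^{-1} = \rho(x^{\alpha'})^2,
\]
and multiplying by $f(x^{\alpha'}, x^{\alpha'})$ on the right (which is legitimate because both sides lie in $Z(A)$ by Proposition \ref{isom} applied to $\rho(x^{\alpha'})^2 \in Z(A)$ and the commutativity of the product in (\ref{equ50})) yields identity (\ref{equsteineruj1}). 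Next, substituting $h_1 = h_2 = h$ into (\ref{equ51}) is literally identity (\ref{equsteineruj2}). Hence equations (\ref{equsteineruj1})–(\ref{equsteineruj2}) are necessary and sufficient for $\beta$ to be an automorphism, given the regularity hypotheses on $\alpha', \alpha''$ and on the range of $\rho$.

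Finally, the statement about commutators is simply the specialization of (\ref{equisomruj3}) to $h_1 = h_2 = h$. Since (\ref{equisomruj3}) was obtained in the isomorphism discussion by comparing identity (\ref{equ51}) for the pair $(x,y)$ with the same identity for the pair $(y,x)$ (possible because the Steiner loop $S$ is commutative so $xy = yx$), the same computation carries through verbatim here and gives $[h(x), h(y)]^{\alpha''} = [h(x^{\alpha'}), h(y^{\alpha'})]$.

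There is essentially no obstacle in this proof: it is a mechanical specialization of the isomorphism criterion to the automorphism case. The only minor point requiring attention is to ensure that the centrality hypothesis $\rho(x^{\alpha'}) \in Z(A)$ — which allows the rearrangement of (\ref{equ50}) into the symmetric form (\ref{equsteineruj1}) — is carried over correctly from Corollary \ref{isomsteiner}, and that the conditions on $\alpha'$ and $\alpha''$ being automorphisms (rather than merely maps) are kept as part of the hypothesis.
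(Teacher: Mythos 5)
Your proof is correct and follows exactly the paper's route: the paper also obtains this corollary by setting $(L_1,\cdot)=(L_2,\ast)$ in Corollary \ref{isomsteiner}, whereupon (\ref{equ50}), (\ref{equ51}) and (\ref{equisomruj3}) specialize to (\ref{equsteineruj1}), (\ref{equsteineruj2}) and the commutator identity. The rearrangement of (\ref{equ50}) into (\ref{equsteineruj1}) using the centrality of $\rho(x^{\alpha'})^2$ is handled correctly.
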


\begin{Coro} \label{steinerautomorphism} Let $L$ be a  Steiner-like loop. 
\newline
(a) The normal subgroup $\Psi $ of the automorphism group $\Gamma $ of $L$ is an elementary abelian $2$-group. 
\newline
(b) Assume that $\rho : S \to Z(A)$ is a homomorphism and $S$ has more than 2 elements. Then the map $\beta : L \to L$ 
defined by 
$(x, \xi)^{\beta }=(x^{\alpha '}, \rho(x^{ \alpha '}) \xi^{\alpha ''})$ is an automorphism  of $L$ 
if and only if  for all $x \in S \setminus \{ e \}$ identities 
\begin{equation} \label{equsteiner1} 
 f(x,x)^{\alpha ''} =  f(x^{\alpha '}, x^{\alpha '}) \ \ \hbox{and} \ \ c \ h(x^{\alpha '}) = h(x^{\alpha '}) c^{-1}= h(x)^{\alpha ''} \nonumber \end{equation} 
are satisfied, where $c$ is a suitable element of $A$.  
\end{Coro}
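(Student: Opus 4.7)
My plan is the following. Part (a) is immediate from the description of $\Psi$: recall that its elements are precisely the automorphisms $\beta_\rho:(x,\xi)\mapsto (x,\rho(x)\xi)$ with $\rho:S\to Z(A)$ a homomorphism, and it was already noted that $\Psi$ is commutative. Since $S$ is a Steiner loop every element satisfies $x^2=e$, so $\rho(x)^2=\rho(x^2)=1$ for every $x$. Consequently $\beta_\rho^2=\beta_{\rho^2}=\mathrm{id}_L$, hence every element of $\Psi$ has order at most $2$, and $\Psi$ is an elementary abelian $2$-group.

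For part (b) the strategy is to apply Corollary \ref{steinerautomorphismuj} and to simplify identities (\ref{equsteineruj1}) and (\ref{equsteineruj2}) under the additional assumption that $\rho$ is a homomorphism. Since $\alpha'$ is an automorphism of $S$, the composite $\rho\circ\alpha'$ is again a homomorphism, so $\rho((xy)^{\alpha'})=\rho(x^{\alpha'})\rho(y^{\alpha'})$, and all values lie in $Z(A)$. Because $\rho(x^{\alpha'})^2=\rho((x^{\alpha'})^2)=1$, identity (\ref{equsteineruj1}) collapses to $f(x^{\alpha'},x^{\alpha'})=f(x,x)^{\alpha''}$, which is the first claimed condition. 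Identity (\ref{equsteineruj2}) loses the $\rho$-prefactor (which becomes $1$ by centrality and multiplicativity) and becomes
\[ h(x^{\alpha'})\,h(y^{\alpha'})=h(x)^{\alpha''}\,h(y)^{\alpha''}\qquad\text{for all } x\neq y\in S\setminus\{e\}. \]

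The core of the proof is to show that this last equation is equivalent to the existence of an element $c\in A$ satisfying $c\,h(x^{\alpha'})=h(x^{\alpha'})\,c^{-1}=h(x)^{\alpha''}$ for every $x\in S\setminus\{e\}$. I would rewrite the equation as $h(x^{\alpha'})^{-1}h(x)^{\alpha''}=h(y^{\alpha'})h(y)^{-\alpha''}$, where the left-hand side depends only on $x$ and the right-hand side only on $y$. Exploiting that $|S|>2$ forces $|S\setminus\{e\}|\geq 3$, one picks three distinct non-identity elements and varies them to conclude that both sides equal a single constant, which I denote $c^{-1}$. Reading off this constant in two different ways simultaneously yields $h(x)^{\alpha''}=h(x^{\alpha'})c^{-1}$ and $h(x)^{\alpha''}=c\,h(x^{\alpha'})$; equating the two expressions produces the conjugation relation $c\,h(x^{\alpha'})=h(x^{\alpha'})c^{-1}$. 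The converse is a routine substitution: given the two identities of the corollary, expanding (\ref{equsteineruj1}) and (\ref{equsteineruj2}) with the homomorphism property of $\rho$ reproduces them verbatim, and Corollary \ref{steinerautomorphismuj} delivers the automorphism property.

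The main obstacle I anticipate is the extraction of $c$ together with the conjugation relation: one must simultaneously use the $x$-only and $y$-only expressions, and the assumption $|S|>2$ is genuinely needed, since with only one pair of non-identity elements available the constant $c$ cannot be detected globally. All remaining steps are direct substitutions into the already-proved Corollary \ref{steinerautomorphismuj}.
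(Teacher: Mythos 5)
Your proposal is correct and follows essentially the same route as the paper: part (a) via $\rho(x)^2=\rho(x^2)=1$, and part (b) by specializing Corollary \ref{steinerautomorphismuj}, using the homomorphism property of $\rho$ to cancel the $\rho$-prefactors, and then separating variables in $h(x^{\alpha'})h(y^{\alpha'})=h(x)^{\alpha''}h(y)^{\alpha''}$ (using $|S\setminus\{e\}|\ge 3$, exactly as the paper does by introducing a third element $z$) to extract the constant $c$ and the relation $c\,h(x^{\alpha'})=h(x^{\alpha'})c^{-1}=h(x)^{\alpha''}$.
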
 
\begin{proof} Since for all $x \in S \setminus \{ e \}$ one has $o(x)=2$ and $\rho :S \to Z(A)$ is a homomorphism any element $\rho (x)$ has order at most $2$. Therefore $id \neq \beta _{\rho } \in \Psi $ is an involution and (a) holds.  

As for all $x \in S$ the element $\rho (x)$ has order at most $2$ identity (\ref{equsteineruj1})  yields the first identity of the Corollary and 
identity (\ref{equsteineruj2}) reduces to 
\begin{equation} \label{equh1} [h(x)^{\alpha ''}]^{-1} h(x^{\alpha '}) = h(y)^{\alpha ''} [h(y^{\alpha '})]^{-1}=c   \end{equation} 
for all $y \neq x \in S \setminus \{ e \}$. Let $z $ be an element of $S \setminus \{ e, x, y \}$. Then replacing $y$ by $z$,   respectively  $x$ by $z$    in (\ref{equh1}) we obtain  $h(z)^{\alpha ''}= c h(z^{\alpha '})$, respectively 
$h(z^{\alpha '})= h(z)^{\alpha ''} c$. Comparing the last equations we get  
$h(z^{\alpha '})=c h(z^{\alpha '}) c$ for all $z \in S \setminus \{ e \}$.

Conversely, let $f(x,x)^{\alpha ''} =f(x^{\alpha '}, x^{\alpha '})$ and 
$$[h(x)^{\alpha ''}]^{-1} h(x^{\alpha '})= h(x)^{\alpha ''}  h(x^{\alpha '})^{-1}=c$$ for all $x \in S \setminus \{ e \}$. Since $\rho $ is a homomorphism $\rho (S)$ is an elementary abelian $2$-group. Hence the identity $f(x,x)^{\alpha ''} =  f(x^{\alpha '}, x^{\alpha '})$ in the Corollary yields identity (\ref{equsteineruj1}) and identity (\ref{equsteineruj2}) reduces to $h(x)^{\alpha ''} h(y)^{\alpha ''}= h (x^{\alpha '}) 
h(y^{\alpha '})$. Putting in this identity $h(x)^{\alpha ''} =c h(x^{\alpha '})$ we get $c h(x^{\alpha '}) c= 
h (x^{\alpha '})$  which is true because of (\ref{equh1}). \end{proof}

Author's address: \'Agota Figula \\
Institute of Mathematics, University of Debrecen \\
H-4010 Debrecen, P.O.B. 12 \\
Hungary, figula@math.klte.hu 

\medskip
\noindent 
Karl Strambach \\
Department Mathematik, Universit\"at Erlangen-N\"urnberg \\ 
D-91054 Erlangen, Bismarkstra\ss e 1 1/2 \\ 
Germany, strambach@mi.uni-erlangen.de

\end{document}